\documentclass[10pt, a4paper, leqno]{amsart}

\usepackage{geometry}
\usepackage{amsmath, amsthm, amsfonts, amssymb}
\allowdisplaybreaks[3]

\numberwithin{equation}{section}

\theoremstyle{plain}
\newtheorem{Theo}{Theorem}[section]
\newtheorem{Cor}[Theo]{Corollary}
\newtheorem{Lem}[Theo]{Lemma}
\newtheorem{Def}{Definition}[section]

\theoremstyle{remark}
\newtheorem{Rem}{Remark}[section]
\newtheorem*{Conj}{Conjecture}

\newcommand{\RR}{\mathbb{R}}
\newcommand{\NN}{\mathbb{N}}
\newcommand{\Ee}{\mathfrak{e}}
\newcommand{\Hh}{\mathcal{H}}
\newcommand{\Gg}{\mathcal{G}}
\newcommand{\Ll}{\mathrm{L}}
\newcommand{\C}{\mathrm{C}}
\newcommand{\R}{\mathrm{R}}
\newcommand{\eps}{\varepsilon}

\newcommand{\spp}{s^{\phi,\psi}}
\newcommand{\spu}{s^{\phi,u}}
\newcommand{\scvu}{s^{v,u}}
\newcommand{\sgn}{\operatorname{sgn}}
\newcommand{\card}{\operatorname{card}}

\usepackage{hyperref}

\title{Zeros of GKP sequences of polynomials}

\author{Antonio J. Dur\'an}
\address{Departamento de An\'a\-li\-sis Mate\-m\'a\-ti\-co and IMUS,
Uni\-ver\-sidad de Se\-vi\-lla,
Se\-vi\-lla, Spain} 
\email{duran@us.es}
\author{Mario P\'erez}
\address{Departamento de Mate\-m\'a\-ti\-cas and IUMA,
Universidad de Zara\-goza,
Zaragoza, Spain} 
\email{mperez@unizar.es}
\author{Juan L. Varona}
\address{Departamento de Mate\-m\'a\-ti\-cas y Com\-pu\-ta\-ci\'on,
Universidad de La Rioja,
Lo\-gro\-\~no, Spain} 
\email{jvarona@unirioja.es}

\dedicatory{To the memory of our friend Jes\'us Guillera}

\begin{document}

\begin{abstract}
Given two sequences $\phi=(\phi_i)_{i\ge 1}$ and $\psi=(\psi_i)_{i\ge 1}$ and numbers $a,b,c$, we introduce the GKP sequence of polynomials $(p_n)_n$ using the following recurrence formula:
$p_0 = 1$ and for $n\ge 1$
\[
  p_{n}(x) = (ax^2+bx+c) p_{n-1}'(x) + (\phi_{n} + \psi_{n} x)p_{n-1}(x),
\]
where we assume that $ax^2+bx+c$ has two different real zeros.

Tangent, Secant, Eulerian or Jacobi polynomials are examples of GKP sequences of polynomials. In this paper, under mild assumptions we prove that the zeros of the polynomials $p_n$ are real, simple and live between the zeros of $ax^2+bx+c$. Moreover, the zeros of $p_{n+1}$ interlace the zeros of $p_n$. We study in detail the cases when $\psi$ is constant, and $\phi=(\phi_i)_{i\ge 1}$ is constant for $i$ big enough, proving, among other results, asymptotics for the leftmost and rightmost zeros of~$p_n$.
\end{abstract}

\maketitle

\section{Introduction and results}

The purpose of this paper is to study properties of zeros of GKP sequences of polynomials.

\begin{Def}
Given real numbers $a,b,c\in \RR$ such that the polynomial $ax^2+bx+c$ has two different real zeros, and two sequences of real numbers
$\psi= (\psi_i)_{i \ge 1}$, $\phi = (\phi_i)_{i \ge 1}$,
with $\psi_{i} \ne -a(i-1)$, $i\ge 1$, the GKP sequence of polynomials
$p_n(x)$ is defined by $p_0 = 1$ and
\begin{equation}
\label{sn+1i}
  p_{n}(x) = (ax^2+bx+c) p_{n-1}'(x) + (\phi_{n} + \psi_{n} x)p_{n-1}(x), \quad n \ge 1.
\end{equation}
\end{Def}

It is clear that $p_n$ is a polynomial of degree $n$ with leading coefficient $\prod_{j=1}^n (a(j-1)+\psi_{j})$.

A simple computation shows that if we write $p_n(x) = \sum_{j=0}^n a_{n,j}x^j$ then the two-index sequence $a_{n,j}$ satisfies the recurrence relation given by $a_{0,0}=1$ and, for $n,j\ge 1$,
\begin{equation}
\label{gkp1}
  a_{n,j} = c(j+1)a_{n-1,j+1}+(bj+\phi_n)a_{n-1,j}+(aj+\psi_n-a)a_{n-1,j-1}.
\end{equation}

The initials \textbf{GKP} in the expression ``GKP sequence of polynomials'' are for R.~L. \textbf{G}raham, D.~E. \textbf{K}nuth, and O. \textbf{P}atashnik. They proposed the following problem in \textit{Concrete Mathematics}
\cite[Problem 6.94, p.~319]{Knuth-libro}: Develop a general theory of the solutions to the two-parameter recurrence $a_{0,0}=1$,
\begin{equation}
\label{gkp2}
  a_{n,j} = (An+Bj+C)a_{n-1,j}+(A'n+B'j+C')a_{n-1,j-1},\quad n,j\ge 0
\end{equation}
(assuming that $a_{n,j}=0$ when $n<0$ or $j<0$).

Many relevant sequences of numbers satisfy recurrences of the form~\eqref{gkp2}: binomial coefficients, both kinds of Stirling numbers, Eulerian numbers, Lah numbers, among others. Since Graham, Knuth and Patashnik proposed this problem, some results on it have appeared using different approaches: \cite{BSV, Neu, Spi, Th1, Th2, Th3, Wilf2}.

Comparing~\eqref{gkp1} and~\eqref{gkp2}, we see that when $B' \ne 0$, any solution of the GKP problem are the coefficients $a_{n,j}$ of the GKP sequence of polynomials given by
\[
  c=0,\quad
  \begin{cases}
  \phi_n=An+C,& \\
  b=B,&
  \end{cases}
  \quad
  \begin{cases}
  \psi_n=A'n+a+C',& \\
  a=B'.&
  \end{cases}
\]
The case $B'=0$ and $B \ne 0$ was considered in \cite{Duran-Bell,dzol}, and the case $B'=B=0$ and $c \ne 0$ in~\cite{dzoh}.

Actually, we can consider in~\eqref{sn+1i} a particular polynomial with two different real zeros (for instance, $1-x^2$) instead of the arbitrary polynomial $ax^2+bx+c$, because the corresponding GKP sequences of polynomials are connected through a normalization plus a linear change of variable (see Remark~\ref{misi}).

\begin{Def}
Given two sequences of real numbers
$\psi= (\psi_n)_{n \ge 1}$, $\phi = (\phi_n)_{n \ge 1}$,
with $\psi_{n} \ne n-1$, $n\ge 1$, we define the sequence of polynomials
$\spp_n(x)$ as $\spp_0(x)=1$ and
\begin{equation}
\label{sn+1}
  \spp_{n+1}(x) = (1-x^2) (\spp_n)'(x) + (\phi_{n+1} + \psi_{n+1} x)\spp_n(x), \quad n \ge 1.
\end{equation}
\end{Def}

In order to simplify the notation, we will sometimes write $s_n$ instead of~$\spp_n$.

It is clear that $\spp_n$ is a polynomial of degree $n$ with leading coefficient $\prod_{j=1}^n(-j+1+\psi_{j})$.

For a real number $v\in \RR$, the constant sequence $\phi_i=v$, $i\ge 1$, will be denoted by $v$, and the same with $\psi_i=u$, $i\ge 1$. In particular, we will very often use the sequences $\scvu_{n}(x)$.

The content of the paper is as follows.

In Section~\ref{sec2}, we show that some significant examples of polynomials, such as Tangent and Secant, Eulerian and Jacobi polynomials, are particular cases of GKP polynomials.

In Section~\ref{sec3}, we prove that under mild assumptions, the GKP polynomials have nice zero properties, as the following theorem shows.

\begin{Theo}
\label{ptci}
Assuming that $|\phi_n| + \psi_n < 0$ for every $n \ge 0$, then all the zeros of the polynomial $\spp_n$ \eqref{sn+1} are simple and belong to the interval $(-1,1)$. Moreover, the zeros of $s_{n+1}^{\phi,\psi}$ interlace the zeros of~$\spp_n$ (for the definition of the interlacing property see Definition~\ref{dip} below).
\end{Theo}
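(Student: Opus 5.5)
The proof will go by induction on $n$. The induction hypothesis is that $\spp_n$ has exactly $n$ simple zeros $x_1<\dots<x_n$, all in $(-1,1)$. The key tool is to evaluate the recurrence \eqref{sn+1} at three kinds of points: the zeros of $s_n$ and the two endpoints $\pm1$. At a zero $x_k$ of $s_n$ the recurrence gives
\[
s_{n+1}(x_k)=(1-x_k^2)\,s_n'(x_k).
\]
At the endpoints the derivative term drops out, so
\[
s_{n+1}(1)=(\phi_{n+1}+\psi_{n+1})\,s_n(1),\qquad s_{n+1}(-1)=(\phi_{n+1}-\psi_{n+1})\,s_n(-1).
\]

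First I record what the hypothesis $|\phi_n|+\psi_n<0$ gives.
\begin{itemize}
\item It implies $\psi_n<0$, so $\psi_n\ne n-1$ for every $n\ge1$, and the leading coefficient $\prod_{j=1}^n(\psi_j-j+1)$ is nonzero with sign $(-1)^n$.
\item It gives $\phi_{n+1}+\psi_{n+1}<0$ and $\phi_{n+1}-\psi_{n+1}>0$.
\end{itemize}
Since $s_0=1$, the endpoint formulas give by a trivial induction that
\[
s_n(-1)>0 \quad\text{and}\quad \sgn s_n(1)=(-1)^n \qquad\text{for all } n.
\]
In particular $s_n(\pm1)\neq0$.

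Base case. We have $s_1(x)=\phi_1+\psi_1x$, whose only zero is $-\phi_1/\psi_1$. This lies in $(-1,1)$ because $|\phi_1|<-\psi_1$.

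Inductive step. Assume the hypothesis for $s_n$.
\begin{itemize}
\item Interior intervals. Since the zeros of $s_n$ are simple, $s_n'(x_k)$ alternates in sign as $k$ runs from $1$ to $n$. Because $1-x_k^2>0$, the values $s_{n+1}(x_k)$ also alternate in sign. This produces one zero of $s_{n+1}$ in each interval $(x_k,x_{k+1})$, giving $n-1$ zeros.
\item Left end. Since $s_n(-1)>0$ and $s_n$ has no zeros in $[-1,x_1)$, we get $s_n'(x_1)<0$, hence $s_{n+1}(x_1)<0$. Together with $s_{n+1}(-1)>0$, this gives a zero of $s_{n+1}$ in $(-1,x_1)$.
\item Right end. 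Similarly, $\sgn s_n'(x_n)=\sgn s_n(1)=(-1)^n$, so $s_{n+1}(x_n)$ has sign $(-1)^n$. Since $s_{n+1}(1)$ has sign $(-1)^{n+1}$, there is a zero of $s_{n+1}$ in $(x_n,1)$.
\end{itemize}

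This accounts for $n+1$ distinct zeros of $s_{n+1}$ in $(-1,1)$. Since $s_{n+1}$ has degree exactly $n+1$, these are all of its zeros, and they are simple. They strictly interlace the zeros of $s_n$ in the sense of Definition~\ref{dip}, which closes the induction.

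The only delicate point is the sign bookkeeping at the endpoints. It is exactly there that both inequalities $\phi_{n+1}\pm\psi_{n+1}\gtrless0$, hidden in $|\phi_{n+1}|+\psi_{n+1}<0$, are needed. They guarantee that $s_{n+1}(\pm1)$ has the sign opposite to $s_{n+1}$ at the extreme zeros of $s_n$, which forces the two outer zeros to stay inside $(-1,1)$ rather than escaping past the endpoints.
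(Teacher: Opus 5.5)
Your proof is correct and follows essentially the paper's route: induction via Lemma~\ref{depaq}, whose proof (Lemma~\ref{depaq2} plus the counting lemma after it) is the same sign comparison of $(1-x^2)p'+(a+bx)p$ at consecutive points of $Z(p)\cup\{\pm1\}$, followed by a degree count. The only difference is that you specialize to simple zeros in $(-1,1)$ and read the endpoint signs off $s_n(\pm1)=\prod_j(\phi_j\pm\psi_j)$. The paper instead handles the endpoints by a local factorization valid for arbitrary real polynomials $p$, and it reuses that generality later (Lemmas~\ref{depaq1} and~\ref{depaq2} in Corollary~\ref{ptc3} and Lemma~\ref{hqj2}).
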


The case when $\psi$ is a constant sequence, $\psi_i=v\in \RR$, $i\ge 1$, is specially interesting because then the coefficients of $\spp_n$ are symmetric functions of the parameters $\phi_1,\dots,\phi_n$. As a consequence, we will show in Section~\ref{sec4} that the zeros of $\spp_n$ have some extra properties, such as the monotonicity with respect to the parameters $\phi$, and some others.

In Section~\ref{sec5} we study the case when $\psi$ and $\phi$ are constant sequences, $\psi_i=v\in \RR$, $\phi_i=u\in \RR$, $i\ge 1$. We then prove the following asymptotic behaviour for the zeros of the polynomials $\scvu_n(2x+1)$.

\begin{Theo}
\label{thchi} 
Assume $|v|+u<0$ and let $\zeta_{n,j}$, $j=1,\dots, n$, the zeros of $s_n^{v,u}(2x+1)$ arranged in decreasing size. We then have for the rightmost zeros,
\begin{equation}
\label{rmzi}
  \lim_{n\to \infty} \frac{\zeta_{n,j}}{\frac{j}{u-j+1}\left(\frac{u+v-2j+2}{u+v-2j}\right)^n} = 1,
  \quad j\ge 1.
\end{equation}
And for the leftmost zeros,
\begin{equation}
\label{lmzi}
  \lim_{n\to \infty} \frac{\zeta_{n,n-j+1}}{-1-\frac{j}{u-j+1}
  \left(\frac{u-v-2j+2}{u-v-2j}\right)^n} = 1,
  \quad j\ge 1.
\end{equation}
\end{Theo}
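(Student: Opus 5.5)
The plan is to reduce to a triangular coefficient recurrence near the endpoint $x=1$ (that is, $y=0$ after the change $x=2y+1$), read off the exponential growth rates of the Taylor coefficients, and locate the $j$-th rightmost zero by a Newton-polygon/Rouché argument. The leftmost zeros will then follow from a symmetry of the recurrence.

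\textbf{Step 1: coefficient recurrence at $y=0$.} Put $q_n(y)=s_n^{v,u}(2y+1)$. Since $1-x^2=-4y(1+y)$ and $d/dx=\tfrac12\, d/dy$, the recurrence \eqref{sn+1} with $\phi_i=v$, $\psi_i=u$ becomes
\[
  q_{n+1}(y)=-2y(1+y)q_n'(y)+(u+v+2uy)\,q_n(y).
\]
Writing $q_n(y)=\sum_k c_{n,k}y^k$, we get the triangular recurrence
\[
  c_{n+1,k}=\lambda_k c_{n,k}+2(u-k+1)\,c_{n,k-1},\qquad \lambda_k=u+v-2k,
\]
with $c_{0,k}=\delta_{k,0}$.

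\textbf{Step 2: solving the recurrence.} By induction on $k$, $c_{n,k}=\sum_{i=0}^k\alpha_{k,i}\lambda_i^n$. The coefficients satisfy $\alpha_{k,i}=2(u-k+1)\alpha_{k-1,i}/(\lambda_i-\lambda_k)$ for $i<k$, and $\alpha_{k,k}=-\sum_{i<k}\alpha_{k,i}$ (from $c_{0,k}=0$). The hypothesis $|v|+u<0$ gives $u+v<0$ and $u-v<0$. Hence $|\lambda_0|<|\lambda_1|<\cdots$, so $c_{n,k}=A_k\lambda_k^n(1+O(\rho_k^n))$ with $\rho_k=|\lambda_{k-1}/\lambda_k|<1$ and $A_k=\alpha_{k,k}$. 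Using $\lambda_i-\lambda_k=2(k-i)$, these are partial-fraction coefficients that can be summed in closed form, e.g.\ via the generating function in $y$ or by recognizing a binomial sum. The target is
\[
  A_k=(-1)^k\binom{u}{k}\cdot(\text{sign conventions}),
\]
so that the ratio $A_{j-1}/A_j$ equals $-j/(u-j+1)$. The factor $j$ in \eqref{rmzi} should come exactly from $\binom{u}{j}/\binom{u}{j-1}=(u-j+1)/j$. I expect getting this closed form right, with signs, to be a careful but routine computation.

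\textbf{Step 3: locating the rightmost zeros.} For fixed $j$, set $r_j=\lambda_{j-1}/\lambda_j\in(0,1)$ and rescale $y=t\,r_j^n$. The scales $r_1^n\ll r_2^n\ll\cdots$ are well separated: each $r_j^n$ is exponentially smaller than $r_{j+1}^n$. Divide $q_n(t r_j^n)$ by $c_{n,j-1}(r_j^n)^{j-1}$ and use Step 2. The two terms $k=j-1,j$ tend to $1+(A_j/A_{j-1})\,t$. All other terms tend to $0$ uniformly on compact $t$-sets:
\begin{itemize}
  \item terms $k<j-1$ because $r_k^n/r_j^n\to0$ forces the lower coefficients to be negligible at this scale;
  \item terms $k>j$ similarly, after the crude bound $|c_{n,k}|\le C_k|\lambda_k|^n$;
  \item the tail $k$ large, controlled because $\deg q_n=n$ and $|c_{n,k}|\le C^n\binom{n}{k}$-type bounds suffice on $|y|\le r_j^n$.
\end{itemize}
By Hurwitz/Rouché, $q_n$ has exactly one zero $t_n\to -A_{j-1}/A_j$ in such a disc. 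Theorem~\ref{ptci} gives that all zeros of $s_n^{v,u}$ are real, simple and in $(-1,1)$. So the zeros $\zeta_{n,1}>\zeta_{n,2}>\cdots$ in $(-1,0)$ closest to $0$ are exactly these, one per scale, in order. This yields \eqref{rmzi} with $\zeta_{n,j}\sim -\frac{A_{j-1}}{A_j}\,r_j^n=\frac{j}{u-j+1}r_j^n$. The main obstacle is the uniform control of the tail $k>j+1$ in the rescaled sum, since $k$ ranges up to $n$. I would first try to dominate $|c_{n,k}|$ by the coefficients of $(|\lambda_0|+\ldots)$-type majorant polynomials solving the same recurrence with absolute values, i.e.\ $\tilde q_{n+1}=2y(1+y)\tilde q_n'+(|u+v|+2|u|y)\tilde q_n$, and bound $\tilde q_n(|y|)$ for $|y|\le r_{j+1}^{n/2}$.

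\textbf{Step 4: the leftmost zeros.} Set $r_n(x)=(-1)^n s_n^{v,u}(-x)$. Then
\[
  r_{n+1}=(1-x^2)r_n'+(-v+ux)r_n,
\]
so $s_n^{v,u}(-x)=(-1)^n s_n^{-v,u}(x)$. The map $x\mapsto -x$ corresponds to $y\mapsto -1-y$, and the hypothesis $|v|+u<0$ is symmetric in $v\mapsto -v$. Applying \eqref{rmzi} to $s_n^{-v,u}$ therefore gives \eqref{lmzi}, with $u+v$ replaced by $u-v$ and $\zeta_{n,n-j+1}=-1-\zeta^{(-v)}_{n,j}$.
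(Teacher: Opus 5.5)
Your Steps 1, 2 and 4 are correct, and Step 2 closes quickly. The recurrence $\alpha_{k,i}=\frac{u-k+1}{k-i}\alpha_{k-1,i}$ gives $\alpha_{k,i}=\binom{u-i}{k-i}A_i$. Hence $A_k=(-1)^k\binom{u}{k}$ and $\alpha_{k,i}=(-1)^i\binom{u}{k}\binom{k}{i}$. Equivalently, this is the coefficient of $y^k$ in $e^{(u+v)z}\bigl(1+y(1-e^{-2z})\bigr)^u$, which is the generating function of Section~5 evaluated at $x=2y+1$.

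The genuine gap is the one you flag yourself in Step 3, and neither remedy you propose closes it. Hurwitz needs $q_n(tr_j^n)/\bigl(c_{n,j-1}r_j^{n(j-1)}\bigr)\to t^{j-1}\bigl(1+\frac{A_j}{A_{j-1}}t\bigr)$ uniformly on $|t|\le R$. The terms with $j+1\le k\le n$ form a sum whose length grows with $n$, while Step 2 controls $c_{n,k}$ only for each fixed $k$, with a $k$-dependent error constant.
\begin{itemize}
\item No bound $|c_{n,k}|\le C^n\binom nk$ can hold, because $|c_{n,n}|=2^n n!\,\bigl|\binom un\bigr|$ grows superexponentially.
\item Since $u<0$ and $u+v<0$, your absolute-value majorant recurrence is exactly the recurrence satisfied by $(-1)^nq_n$, because all $(-1)^nc_{n,k}$ are already nonnegative. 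So $\tilde q_n=(-1)^nq_n$, and bounding it merely restates the problem.
\end{itemize}

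What is missing is a bound on $|c_{n,k}|$ that is uniform in $0\le k\le n$. The paper gets it in Lemma~\ref{l5.8}, namely $(-1)^n c_{n,k}\le\binom{-u+k-1}{k}(-u-v+2k)^n$, from the Stirling-number formula and $k!\,S(l,k)\le k^l$. In your framework, the triangle inequality applied to $\alpha_{k,i}=(-1)^i\binom uk\binom ki$ already gives $|c_{n,k}|\le 2^k\bigl|\binom uk\bigr|\,|\lambda_k|^n$, and this suffices:
\begin{itemize}
\item Write $g_k=|\lambda_k|$ and $\theta=g_{j-1}g_{j+1}/g_j^2<1$. Log-concavity of $g$ gives $\frac{g_k}{g_j}r_j^{k-j}=\prod_{m=j}^{k-1}\frac{g_{m+1}}{g_m}r_j\le\theta^{k-j}$.
\item Hence, for $|y|\le Rr_j^n$ and $k\ge j+1$, the $k$-th term divided by $\bigl|\binom u{j-1}\bigr|g_{j-1}^nr_j^{n(j-1)}$ is at most $\bigl|\binom uk/\binom u{j-1}\bigr|(2R)^k\theta^{n(k-j)}$.
\item Since $\bigl|\binom uk\bigr|$ grows only polynomially in $k$, the sum over $k\ge j+1$ tends to $0$.
\end{itemize}

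You should also apply Hurwitz on the whole disc $|t|<R$ with $R>|A_{j-1}/A_j|$, where the limit has exactly $j$ zeros ($j-1$ of them at $t=0$). That is what excludes zeros between scales and identifies your zero as $\zeta_{n,j}$; Theorem~\ref{ptci} alone does not.

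With these repairs your argument is a valid and more direct alternative to the paper's. The paper feeds Lemma~\ref{l5.8} into the general Theorem~\ref{sip}, using truncation plus tail, sign changes at the endpoints of the intervals $J_{m,n}$, and a derivative argument for simplicity. A side benefit of your route is that it never uses the log-concavity of $F(j)=\binom{-u+j-1}{j}$ required by Theorem~\ref{sip}. That property in fact holds only when $u\le -1$, whereas the hypothesis $|v|+u<0$ allows $-1<u<0$.
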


This is a consequence of a general and powerful result on zeros of polynomials $P_n(x) = \sum_{j=0}^na_{n,j}x^j$ whose coefficients $a_{n,j}$ behave as $F(j)G^n(j)$, where $F$ and $G$ satisfy
$G(0),G(1)>0$ and
\[
  |F(j-1)F(j+1)|\le F^2(j),\quad 0 < G(j-1)G(j+1) < G^2(j)
\]
(see Theorem~\ref{sip}). This result, which is interesting in its own right, will be proved in the Appendix.

Using the same tool, we study, in Section~\ref{sec6}, the case when $\psi$ is constant and the sequence $\phi=(\phi_i)_i$ is constant for $i$ big enough. This is equivalent to studying finite linear combinations of the polynomials $\scvu_n$ of the following type: we associate to the real numbers $u$, $v$, $\gamma_j$, $0\le j\le K$, with $|v|+u<0$, the polynomials
\begin{equation}
\label{polqni}
  q_n^{v,u}(x) = \sum_{j=0}^K\gamma_j\scvu_{n-j}(2x+1),\quad n\ge K.
\end{equation}
We also consider the polynomial
\begin{equation}
\label{polpi}
  P(x) = \sum_{j=0}^K \gamma_jx^{K-j}.
\end{equation}

The main result of this section is the following theorem. Assume that
\begin{equation}
\label{conci}
  P(u+v- 2l),P(-u-v+2l) \ne 0,\quad l\in \NN.
\end{equation}
Write $N^+$, $N^-$ for the cardinal of the sets
\begin{gather*}
  \{l: \text{$l\ge 0$, $P(u+v-2l)P(u+v-2l-2)<0$}\}, \\
  \{l: \text{$l\ge 0$, $P(-u-v+2l)P(-u-v+2l+2)<0$}\},
\end{gather*}
respectively.

\begin{Theo}
\label{th6.3i}
Assume $|v|+u<0$ and that the polynomial~$P$ \eqref{polpi} has only real zeros and satisfies~\eqref{conci}. 
\begin{enumerate}
\item\label{th6.3i-1}
If the zeros of $P$ \eqref{polpi} live in the interval $(v+u,v-u)$, then all the zeros of the polynomial $q_n^{v,u}$ \eqref{polqni} are real and simple for $n\ge K$, and they live in the interval $(-1,0)$. Moreover, the zeros of the polynomial $q_{n+1}^{v,u}$ interlace the zeros of~$q_n^{v,u}$.
\item\label{th6.3i-2}
Otherwise, for $n$ big enough, the polynomial $q_n^{v,u}$ \eqref{polqni} has only real zeros. Moreover, $n-N^+-N^-$ of these zeros live in $(-1,0)$, $N^+$ in $(0,+\infty)$ and the other $N^-$ in $(-\infty,-1)$, and the zeros of $q_{n+1}^{v,u}$ in $(-1,0)$, and $(0,+\infty)$ interlace the zeros of $q_n^{v,u}$ in $(-1,0)$ and $(0,+\infty)$, respectively, and the zeros of $q_{n}^{v,u}$ in $(-\infty,-1)$ interlace the zeros of $q_{n+1}^{v,u}$ in $(-\infty,-1)$.
\end{enumerate}
\end{Theo}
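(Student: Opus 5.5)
Part~\ref{th6.3i-1} will follow by interpreting $q_n^{v,u}$ as a GKP polynomial with eventually constant $\phi$; Part~\ref{th6.3i-2} will follow from the asymptotic result Theorem~\ref{sip}. Write $D=(1-x^2)\frac{d}{dx}+vx$, so that $\scvu_{n}=(D+u)\scvu_{n-1}$ and hence $\scvu_n=(D+u)^{n}1$. Then
\[
  \sum_{j}\gamma_j\scvu_{n-j}=\Bigl(\sum_j\gamma_j(D+u)^{K-j}\Bigr)(D+u)^{n-K}1=P(D+u)\,(D+u)^{n-K}1 .
\]
Since $P$ has only real zeros $r_1,\dots,r_K$, we can factor $P(D+u)=\gamma_0\prod_i(D+u-r_i)$. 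All these operators commute, so the product equals $\gamma_0\,s^{\phi,v}_n$ with $\phi=(u-r_1,\dots,u-r_K,u,u,\dots)$. Here I use the symmetry in $\phi_1,\dots,\phi_n$ noted for constant $\psi$, which lets me put the $r_i$ first.

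The hypothesis of Theorem~\ref{ptci} is $|\phi_i|+v<0$. For $i>K$ this is $|u|+v<0$. I have to check the orientation of the statement: the theorem writes $|v|+u<0$ with the roles of $u,v$ read via the notation $\scvu$, where $v$ plays $\phi$ and $u$ plays $\psi$. I adapt accordingly: for the first $K$ indices the condition becomes $|v-r_i|<-u$, i.e.\ $r_i\in(v+u,v-u)$. This is exactly the hypothesis of Part~\ref{th6.3i-1}. Theorem~\ref{ptci} then gives real, simple, interlacing zeros in $(-1,1)$ for $s_n$, which become zeros in $(-1,0)$ after $x\mapsto 2x+1$.

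For Part~\ref{th6.3i-2} I would write $\scvu_n(2x+1)=\sum_k a_{n,k}x^k$. The proof of Theorem~\ref{thchi} presumably gives $a_{n,k}\sim F(k)\,(u+v-2k)^n$ up to a common factor; this explains the ratios $(u+v-2j+2)/(u+v-2j)$. Then the coefficient of $x^k$ in $q_n$ is
\[
  \sum_j\gamma_j a_{n-j,k}\sim F(k)\,(u+v-2k)^{n-K}P(u+v-2k).
\]
Condition~\eqref{conci} keeps the factor $P(u+v-2k)$ nonzero. Theorem~\ref{sip} with $\tilde F(k)=F(k)P(u+v-2k)$ then localizes the large zeros: near $x=0$ the coefficients behave like $F(k)G^n(k)$, and each sign change of $P(u+v-2l)P(u+v-2l-2)$ produces a positive zero. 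This gives $N^+$ zeros in $(0,\infty)$. By the symmetry $x\mapsto -1-x$ (reflection $x\to-x$ for $s_n$, swapping $v\to -v$), the same analysis gives $N^-$ zeros in $(-\infty,-1)$.

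The remaining $n-N^+-N^-$ zeros in $(-1,0)$ I would obtain by a sign-counting argument. Write $q_n=\gamma_0 s^{\phi,v}_n$ with a $\phi$ that violates the hypothesis only in the first $K$ entries. Then evaluate $q_n$ at the zeros of an auxiliary sequence that does satisfy Theorem~\ref{ptci}, e.g.\ $s^{(u,\ldots),v}_{n-K}$ composed with the factors whose $r_i$ lie in the interval. I expect to count sign alternations giving at least $n-N^+-N^--C$ zeros in $(-1,0)$, with the deficit closed by degree counting. Interlacing in each region then follows from the recurrence $q_{n+1}=(D+u)q_n$ evaluated at consecutive zeros, in the standard way.

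The main obstacle is this middle count. I also need to show that the asymptotic regime of Theorem~\ref{sip} applies uniformly to both tails at once. Finally, I must verify the hypotheses $|F(j-1)F(j+1)|\le F(j)^2$ for the modified $\tilde F$. These may fail, and then I would apply Theorem~\ref{sip} only to finitely many outer indices.
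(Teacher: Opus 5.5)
You have a genuine gap in Part~\eqref{th6.3i-2}: you do not show that all zeros of $q_n^{v,u}$ are real for large $n$, and the mechanism you sketch cannot show it.

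The rest of your plan matches the paper. Part~\eqref{th6.3i-1} is the paper's argument (Remark~\ref{rv0} plus Theorem~\ref{ptci}). One correction: in the paper's convention $\scvu_n$ has $\phi_i=v$ and $\psi_i=u$, so $\scvu_n=(\Lambda_u+v)^n1$, not $(D+u)^n1$ with $D=(1-x^2)\frac{d}{dx}+vx$. The condition $r_i\in(v+u,v-u)$ you end up with is nevertheless the correct one. Your treatment of the zeros outside $[-1,0]$ is Theorem~\ref{onl3} together with its mirror image under \eqref{cosxc}. Your interlacing step is Corollary~\ref{umv}.

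\textbf{The gap.} The step you flag as the main obstacle is in fact the whole content of Part~\eqref{th6.3i-2}: proving that $q_n^{v,u}$ has no non-real zeros for large $n$. Comparing $q_n$ with an auxiliary polynomial built from the good parameters is essentially the argument of Corollary~\ref{ptc3}. It gives at least $n-N$ zeros in $(-1,0)$, where $N$ is the number of zeros of $P$ outside $(v+u,v-u)$.

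The difference $N-N^+-N^-$ is even and can be positive, because two zeros of $P$ in the same gap $(u+v-2l-2,u+v-2l)$ produce no sign change. Degree counting closes a deficit only when it is at most $1$, by parity. An even deficit can be a conjugate pair, and for small $n$ it actually is one. Take $v=0$, $u=-1$, $P(x)=(x+\frac{3}{2})(x+\frac{5}{2})$, i.e.\ $\phi=(\frac{3}{2},\frac{5}{2},0,0,\dots)$. Then $N^+=N^-=0$, yet $s_2^{\phi,-1}(x)=2x^2-4x+\frac{11}{4}$ has non-real zeros. Moreover, the paper's final lemma shows that two bad parameters sitting at a double zero $u+v-2l$ of $P$ give a non-real pair for every $n$. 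So reality for large $n$ depends on where the bad zeros of $P$ sit relative to the lattice $u+v-2l$. No argument whose only input is the number of bad parameters can deliver it.

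\textbf{The missing idea: induction on $K$.} The paper inducts on $K$, and the inductive hypothesis carries the full zero structure, not just counts.
\begin{enumerate}
\item After reducing to $v=0$, the base case $K=1$ is Obreshkov's theorem applied to $\scvu_n+\gamma_1\scvu_{n-1}$, whose two terms have interlacing zeros.
\item For the step, single out one bad parameter, say $\phi_{K+1}>-u$, and let $p_n$ be the polynomial without it. By \eqref{mcc}, $q_n=p_n+\phi_{K+1}p_{n-1}$, so $q_n(\zeta)=\phi_{K+1}p_{n-1}(\zeta)$ at every zero $\zeta$ of $p_n$.
\item By induction, $p_n$ has real simple zeros in $(-\infty,-1)$, $(-1,0)$ and $(0,\infty)$, interlacing with those of $p_{n-1}$. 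Hence $q_n$ has an odd number of zeros between consecutive zeros of $p_n$ in each of the three regions.
\item Next compare $\Hh^+$ with the corresponding set $\tilde\Hh^+$ for $(x+\phi_{K+1})P(x)$; $\Hh^-$ does not change. If the new factor creates a sign change, then $\tilde N^+=N^++1$, and the counts, with Theorem~\ref{onl3} applied to $q_n$, already add up to $n$.
\item If the new factor destroys a sign change, as $\phi_2=\frac{5}{2}$ does in the example above, you are exactly two zeros short. The paper recovers one of them by comparing the asymptotic positions of the negative zeros near $0$ of $p_n$ and of $q_n$ from Theorem~\ref{onl3}, using the indices $g_{m_0}$ versus $\tilde g_{m_0}<g_{m_0}$. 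The last zero is then real by parity.
\end{enumerate}
Your proposal contains no substitute for this, so as it stands it does not prove that the zeros are real.
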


We also provide the asymptotic for the rightmost and leftmost zeros of $q_n^{v,u}$ (see Theorem~\ref{onl3}).

\section{Examples of GKP sequences of polynomials}
\label{sec2}

We start noticing that the polynomial $p_n$ \eqref{sn+1i} depends only on $\phi_i,\psi_i$, $1\le i\le n$ (and on $a,b,c$). Hence, one can associate the finite family of polynomials $p_j$, $0\le j\le n$, to the finite sequences $(\psi_i)_{i=1}^n$ and $(\phi_i)_{i=1}^n$.

\begin{Rem}
\label{misi}
An easy computation shows that any GKP sequence of polynomials~\eqref{sn+1i} can be written in terms of the sequence $s_n^{\tilde \phi,\tilde \psi}$ \eqref{sn+1} as follows:
\[
p_n(x) = \gamma^n s_n^{\tilde \phi,\tilde \psi}(\mu x + \nu),
\]
where $\omega_i$, $i=1,2$, are two different real zeros of $ax^2+bx+c$ with $\omega_2>\omega_1$ and
\begin{gather*}
  \gamma = -\frac{a}{2}(\omega_2-\omega_1),
  \qquad \mu = \frac{2}{\omega_2-\omega_1},
  \qquad \nu = -\frac{\omega_2+\omega_1}{\omega_2-\omega_1},
  \\
  \tilde{\phi}_n = \frac{-1}{a(\omega_2-\omega_1)} \Big( 2\phi_n + (\omega_2+\omega_1)\psi_n \Big),
  \qquad \tilde{\psi}_n = \frac{-1}{a} \psi_n.
\end{gather*}
\end{Rem}

It is easy to check that
\begin{align}
\label{va-1}
  \spp_n(-1) = \prod_{j=1}^n(\phi_{j}-\psi_j),
  \\
\label{va1}
  \spp_n(1) = \prod_{j=1}^n(\phi_{j}+\psi_j).
\end{align}
The factor $1-x^2$ in~\eqref{sn+1} implies that
if $\spp_2(\pm 1) = 0$ then
$\spp_2(x) = \psi_1 (1 - \psi_2)(1-x^2)$, and
since $\psi_1 (1 - \psi_2) \ne 0$, the recursive definition of $\spp_n$ proves that $\spp_n(\pm 1) = 0$ for each $n \ge 2$. We can try to avoid these common zeros at $\pm 1$ as follows.
We write
\begin{equation}
\label{zemm}
  \spp_{n+2}(x) = \psi_1 (1 - \psi_2)(1-x^2) t_n(x),
\end{equation}
where the polynomials $t_n(x)$, $n \ge 0$, with $t_0 = 1$, are defined by
\[
  t_{n+1}(x) = (1-x^2) t_n'(x)
  + \Big(\phi_{n+3} + (\psi_{n+3}-2) x\Big) t_n(x),\quad n\ge 1.
\]
In other words, $t_n(x) = s_{n}^{\tilde \phi,\tilde\psi}(x)$ with the new parameter sequences
\begin{align*}
  \tilde\phi &= (\phi_3, \phi_4, \phi_5, \dots),
  \\
  \tilde\psi &= (\psi_3-2, \psi_4-2, \psi_5-2, \dots).
\end{align*}

Many sequences of polynomials can be represented using GKP sequences of polynomials, as the following three illustrative examples show.

\subsection{Tangent and Secant polynomials}

Tangent polynomials are defined by $P_0=1$, and
\[
  \frac{d^n}{dx^n}\tan (x) = P_{n+1}(\tan x), \quad n\ge 0.
\]
They were considered by Donald E. Knuth and Thomas J. Buckholtz \cite{Knuth-B} (see also
\cite[(6.94), p.~287]{Knuth-libro}).
For the purpose of this paper, it is better to consider the sequence $T_n(x) = -i^nP_n(ix)$ (corresponding to the hyperbolic tangent). It is clear that they can be defined recursively by $T_0=1$, $T_1(x)=x$ and
\[
  T_{n+1}(x) = (1-x^2)T_n'(x),\quad n\ge 1.
\]
So, we have
\[
  T_n(x) = s_n^{0,\hat\psi}(x),
\]
where, as explained above, $0$ is denoting the sequence $\phi_i=0$, $i\ge 1$, and
\[
  \hat\psi_i = 
  \begin{cases} 1,&i=1,\\ 0,&i\ge 2.\end{cases}
\]
Since $T_2(x) = 1-x^2$, proceeding as above (see~\eqref{zemm}), we have
\[
  T_n(x) = (1-x^2)s_{n-2}^{0,-2}(x), \quad n \ge 2.
\]

Secant polynomials are defined by $Q_0=1$ and
\[
\frac{d^n}{dx^n}\sec (x) = Q_{n}(\tan x)\sec x,\quad n\ge 0.
\]
They were considered by D. E. Knuth and T. J. Buckholtz in \cite{Knuth-B}. 
Again, for the purpose of this paper it is better to consider the sequence $R_n(x) = i^nQ_n(ix)$. 
It is clear that they can be defined recursively by $R_0=1$ and
\[
  R_{n+1}(x) = (1-x^2)R_n'(x)-xR_n(x),\quad n\ge 1.
\]
So, we have
\[
  R_n(x) = s_n^{0,-1}(x),
\]
where $0$ and $-1$ denote the sequences $\phi_i=0$, $i\ge 1$, and
$\psi_i=-1$, $i\ge 1$.

Either in the form of tangent and secant or hyperbolic tangent and hyperbolic secant polynomials, these families of polynomials have been considered in different contexts, ranging from combinatorics to applications to the Korteweg-de Vries equation (see, for instance, \cite{Knuth-libro, Knuth-B, Hoff1995, Hoff1999, GV, Boy}), although we have not found a study of their zeros.

\subsection{Eulerian polynomials}

The Eulerian polynomials $(\Ee_n)_n$ are defined by $\Ee_0=1$ and
\begin{equation*}
  \Ee_n(x) = \sum_{j=0}^{n-1} \left\langle \begin{matrix} n \\ j \end{matrix} \right\rangle x^j,
  \quad n\ge 1,
\end{equation*}
where $\left\langle \begin{smallmatrix} n \\ j \end{smallmatrix} \right\rangle$, $0\le j\le n$, denote the Eulerian numbers
\[
  \left\langle \begin{matrix} n \\ j \end{matrix} \right\rangle
  = \sum_{i=0}^j(-1)^{i}\binom{n+1}{i}(j+1-i)^n,
  \quad 0\le j\le n.
\]
Eulerian numbers were introduced by L. Euler \cite{Eul} and have interest in combinatorics and some other areas.
At $x=-1$, the Eulerian polynomials provide the value of the Riemann zeta function at the negative integers
\[
  \Ee_n(-1) = (2^{n+1}-4^{n+1})\zeta(-n),\quad n\ge 0.
\]
Since the Eulerian polynomials satisfy
\[
  \Ee_n(x) = x(1-x)\Ee_{n-1}'(x)+(1+(n-1)x)\Ee_{n-1}(x), \quad n\ge 1,
\]
they are the GKP polynomials for $a=-1$, $b=1$, $c=0$, $\phi_j=1$, $\psi_j=j-1$.

However, the Eulerian polynomials are also related to the polynomials $s_{n}^{0,-2}$, $n\ge 0$. Indeed, consider the modified Eulerian polynomials defined by
\begin{equation*}
  e_n(x) = x^n\Ee_n(1+1/x).
\end{equation*}
Using
\[
  j!\, S(n,j) = \sum_{k=0}^n \left\langle \begin{matrix} n \\ k \end{matrix} \right\rangle \binom{k}{n-j}
\]
(see \cite[(6.39), p.~269]{Knuth-libro}), we have
\begin{equation*}
  e_n(x) = \sum_{j=0}^n j!\, S(n,j)x^j,
\end{equation*}
where $S(n,j)$, $0\le j\le n$, are the Stirling numbers of the second kind. The polynomials $e_n$, $n\ge 0$, are also called geometric or Fubini polynomials.

A simple computation using \cite[(7a), p.~10]{Hir} shows that the polynomials $(e_n)_n$ satisfy
the recurrence
\begin{equation*}
  e_{n+1}(x) = x\left(1+(1+x)\frac{d}{dx}\right)e_n(x),\quad n\ge 0, \quad e_0=1.
\end{equation*}
From where it is easy to conclude that
\begin{equation*}
  e_{n+1}(x) = \frac{x}{(-2)^{n}}s_{n}^{0,-2}(1+2x)
\end{equation*}
(see also \cite{Boy, dzb}).

\subsection{Jacobi polynomials}

For the Jacobi polynomials $P_n^{(\alpha,\beta)}(x)$, we have
\[
  (-1)^n 2^n n!\,P_n^{(\alpha,\beta)}(x) = s_n^{\phi,\psi^n}(x),
\]
where
\[
  \phi_i = \beta-\alpha, \quad i\ge 1, \quad \psi_i^n = -(\alpha+\beta+2(n-i+1)),\quad i=1,\dots, n.
\]
This is a consequence of the backward shift operator for the Jacobi polynomials \cite[(9.8.8), p.~218]{KLS}.

Notice that for a fixed $n$, we can consider the polynomials $s_m^{\phi,\psi^n}(x)$, $m\le n$. 
It is easy to see that
$s_m^{\phi,\psi^n}(x) = (-1)^m 2^m m!\,P_m^{(\alpha+n-m,\beta+n-m)}(x)$.

\section{Zeros of the GPK polynomials}
\label{sec3}

Throughout this paper, the interlacing property is defined as follows.

\begin{Def}
\label{dip}
Given two finite sets $U$ and $V$ of real numbers ordered by size, we say that $U$ interlaces $V$ if $\min(U)<\min(V)$ and between any two consecutive elements of any of the two sets there exists one element of the other (in particular, this gives $U\cap V=\emptyset$).
\end{Def}

Note that if $U$ interlaces $V$, then either $\card(U) = \card(V)$, and then $\max(U) < \max(V)$, or $\card(U) = 1 + \card(V)$, and then $\max(U) > \max(V)$. Observe also that the interlacing property is not symmetric, due to the condition $\min(U) < \min(V)$.

We next prove that under mild conditions on the parameters, the zeros of the GKP polynomials enjoy nice properties.

Given a polynomial $p$ (non-constant, with real coefficients), let us define
\[
  q(x) = (1-x^2) p'(x) + (a+bx) p(x),
\]
for some $a, b \in \RR$. It is easy to see that $q$ can not be the null polynomial, except when $p(x) = (1-x)^n (1+x)^m$, $a = m-n$, $b = n+m$. The additional assumption that $b \ne \deg p$ would assure that $\deg q = 1 + \deg p$. Each zero of $p$ with multiplicity $k$ is a zero of $q$ with multiplicity $k-1$ (at least $k-1$ if the zero is $\pm 1$), so $q$ must have some additional zeros. The next two lemmas give some information about them.

\begin{Lem}
\label{depaq1}
Let $p$ be a polynomial (non-constant, with real coefficients), $a, b \in \RR$, and define
\[
  q(x) = (1-x^2) p'(x) + (a+bx) p(x).
\]
Then, between any two consecutive zeros of $p$ in $(-1,1)$ there exists at least one zero of~$q$.
\end{Lem}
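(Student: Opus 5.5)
The plan is to turn $q$ into the derivative of a product with a positive weight, and then apply Rolle's theorem. On $(-1,1)$ we have $1-x^2>0$, so we look for a function $w$, positive and $C^1$ on $(-1,1)$, with
\[
  \frac{w'(x)}{w(x)} = \frac{a+bx}{1-x^2}.
\]
Partial fractions give
\[
  \frac{a+bx}{1-x^2} = \frac{a+b}{2}\,\frac{1}{1-x} + \frac{a-b}{2}\,\frac{1}{1+x},
\]
so we take
\[
  w(x) = (1-x)^{-(a+b)/2}(1+x)^{(a-b)/2}, \qquad x\in(-1,1).
\]
This $w$ is smooth and strictly positive on $(-1,1)$. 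A direct computation then gives
\[
  \bigl(w(x)p(x)\bigr)' = w(x)\Bigl(p'(x) + \frac{a+bx}{1-x^2}\,p(x)\Bigr) = \frac{w(x)}{1-x^2}\,q(x), \qquad x\in(-1,1).
\]

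Now let $x_1<x_2$ be two consecutive zeros of $p$ in $(-1,1)$. The function $wp$ is differentiable on $[x_1,x_2]\subset(-1,1)$ and vanishes at both endpoints. By Rolle's theorem there is some $\xi\in(x_1,x_2)$ with $(wp)'(\xi)=0$. Since $w(\xi)/(1-\xi^2)>0$, the identity above forces $q(\xi)=0$. This $\xi$ lies strictly between $x_1$ and $x_2$, so it is a zero of $q$ between the two consecutive zeros of $p$, which is what the lemma asserts.

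The only point needing a little care is that we must work strictly inside $(-1,1)$, where $w$ is defined and positive. This is automatic because both zeros lie in the open interval. Multiplicities need no separate treatment: Rolle's theorem applies regardless of the multiplicities of $x_1$ and $x_2$, and the zero $\xi$ it produces is strictly interior. It therefore differs from $x_1$ and $x_2$, and so it is distinct from the zeros of $q$ inherited from multiple zeros of $p$. I do not expect any real obstacle; the key step is guessing the integrating factor $w$.
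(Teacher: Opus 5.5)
Your proof is correct, but it takes a different route from the paper. The paper writes $p(x)=(x-c)^n(d-x)^m r(x)$ with $r$ of constant sign on $[c,d]$ and factors $q(x)=(x-c)^{n-1}(d-x)^{m-1}s(x)$. It then computes $s(c)=(1-c^2)n(d-c)r(c)$ and $s(d)=-(1-d^2)m(d-c)r(d)$. Since these have opposite signs, the intermediate value theorem gives a zero of $s$, hence of $q$, in $(c,d)$.

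You instead write $q=\frac{1-x^2}{w}\,(wp)'$ with the Jacobi-type weight $w(x)=(1-x)^{-(a+b)/2}(1+x)^{(a-b)/2}$ and apply Rolle's theorem to $wp$. Your partial fractions and the identity for $(wp)'$ are right.

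Your argument avoids the factorization and all multiplicity bookkeeping. It also shows $p\mapsto q$ to be a conjugated derivative, which explains the hypothesis $b+|a|<0$ of Lemma~\ref{depaq2}. That hypothesis is exactly the condition that $w\to 0$ at both $-1$ and $1$, and the same holds for the weight $|1-x|^{-(a+b)/2}|1+x|^{(a-b)/2}$ outside $[-1,1]$. So the cases there in which $c$ or $d$ equals $\pm 1$ become a Rolle argument with limits.

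The paper's explicit endpoint computation, on the other hand, is the template it reuses in Lemma~\ref{depaq2} for every position of $c,d$ relative to $\pm 1$. It directly yields the stronger statement that $q$ has an odd number of zeros (counted with multiplicity) in $(c,d)$. Rolle's theorem alone does not give that parity without an additional sign analysis of $(wp)'$ near the endpoints.
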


\begin{proof}
Let $c < d$ be two consecutive zeros of $p$, not necessarily simple. Then, $p(x) = (x-c)^n (d-x)^m r(x)$, where $n, m \in \NN$ and $r(x)$ is some polynomial not vanishing in $[c,d]$ and, therefore, with constant sign there. Thus, $q(x) = (x-c)^{n-1} (d-x)^{m-1} s(x)$, where
\begin{multline*}
  s(x) = (1-x^2) \Big( n(d-x) r(x) - m(x-c)r(x) + (x-c)(d-x) r'(x) \Big)
  \\
  + (a+bx)(x-c)(d-x)r(x).
\end{multline*}
In particular,
\begin{align*}
  s(c) &= (1-c^2) n (d-c) r(c),
  \\
  s(d) &= - (1-d^2) m (d-c) r(d).
\end{align*}
Since $r$ has constant sign in $[c,d]$ and $c, d \in (-1,1)$, we deduce that $s(c) s(d) < 0$, so that $s(x)$ has some zero in $(c,d)$, which is also a zero of $q(x)$.
\end{proof}

Under the condition $b + |a| < 0$, the above result can be improved so as to include all possible relative locations of $c$, $d$, $-1$ and~$1$. In some cases (for instance, if $c < -1$, $1 < d$, which is considered in the above result), more than one zero appears. To sum up, if we cut the interval $(c,d)$ at the points $\pm 1$, each resulting interval contains some zero of~$q$.

\begin{Lem}
\label{depaq2}
Let $p$ be a polynomial (non-constant, with real coefficients), $a, b \in \RR$ such that $b + |a| < 0$, and define
\[
  q(x) = (1-x^2) p'(x) + (a+bx) p(x).
\]
Let $Z(p)$ the set of real zeros of~$p$. If $c < d$ are two consecutive elements of $Z(p) \cup \{\pm 1\}$, then $q$ has an odd number of zeros in $(c,d)$.
\end{Lem}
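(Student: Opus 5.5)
The plan is a sign argument. I will determine the sign of $q$ immediately to the right of $c$ and immediately to the left of $d$, and show that these two signs are always opposite. Then $q$ changes sign an odd number of times on $(c,d)$, so it has an odd number of zeros there, counted with multiplicity. Since $c<d$ are consecutive in $Z(p)\cup\{\pm1\}$, $p$ has no zeros in $(c,d)$, and so it has a constant sign $\sigma\in\{\pm1\}$ on that interval. Every local sign will be expressed in terms of $\sigma$.

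\emph{Endpoint a zero of $p$ different from $\pm1$.} Write $p(x)=C(x-e)^k+O((x-e)^{k+1})$ with $k\ge1$ and $C\neq0$. Then
\[
  q(x)=(1-e^2)kC(x-e)^{k-1}+O((x-e)^k).
\]
If $e=c$, then $\sigma=\sgn C$, and the sign of $q$ just to the right of $c$ is $\sgn(1-c^2)\,\sigma$. If $e=d$, then $\sigma=(-1)^k\sgn C$, and the sign of $q$ just to the left of $d$ is $-\sgn(1-d^2)\,\sigma$.

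\emph{Endpoint $1$ or $-1$.} Here I allow multiplicity $k\ge0$, so that the case where $\pm1$ is not a zero of $p$ is covered too.
\begin{itemize}
\item At $1$, write $p=(x-1)^k r$ with $r(1)\ne0$. A direct computation gives $q=(x-1)^k\big[(a+bx-k(1+x))r-(1-x^2)r'\big]$. The bracket equals $(a+b-2k)r(1)$ at $x=1$, and $a+b-2k<0$ because $b+|a|<0$. Hence $q$ has sign $-\sigma$ on both sides of $1$.
\item At $-1$, write $p=(x+1)^k r$. The analogous bracket at $x=-1$ equals $(a-b+2k)r(-1)$, and $a-b+2k>0$. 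Hence $q$ has sign $+\sigma$ on both sides of $-1$.
\end{itemize}

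Collecting these, the sign of $q$ to the right of $c$ is:
\begin{itemize}
\item $-\sigma$ if $c<-1$ or $c=1$;
\item $+\sigma$ if $c=-1$ or $|c|<1$;
\item $-\sigma$ if $c>1$.
\end{itemize}
The sign of $q$ to the left of $d$ is:
\begin{itemize}
\item $+\sigma$ if $d<-1$ or $d=-1$;
\item $-\sigma$ if $|d|<1$ or $d=1$;
\item $+\sigma$ if $d>1$.
\end{itemize}

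Because $\pm1$ belong to the cutting set, $c$ and $d$ lie in the same closed piece $[-\infty,-1]$, $[-1,1]$ or $[1,\infty]$. Checking each configuration:
\begin{itemize}
\item $c<-1$ and $d\le-1$: signs $-\sigma$ and $+\sigma$.
\item $c\in[-1,1)$ and $d\in(-1,1]$: signs $+\sigma$ and $-\sigma$.
\item $c\ge1$ and $d>1$: signs $-\sigma$ and $+\sigma$.
\end{itemize}
In every case the two signs are opposite. This also shows that $q\not\equiv0$, so the zero count is meaningful. By the intermediate value theorem, applied with multiplicities, $q$ has an odd number of zeros in $(c,d)$.

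The main obstacle is the local expansion at $\pm1$ when $p$ itself vanishes there. The factor $1-x^2$ lowers the expected order, so the leading coefficient becomes $(a\pm b\mp2k)r(\pm1)$. The hypothesis $b+|a|<0$ is exactly what fixes its sign uniformly in $k\ge0$. The rest is careful sign bookkeeping.
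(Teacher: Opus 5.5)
Your proof is correct and is essentially the paper's argument. The paper factors $p=(x-c)^n(d-x)^m r$ and checks $s(c)s(d)<0$ case by case, using the same quantities you compute endpoint by endpoint: $(1-c^2)n$ at a zero of $p$ other than $\pm1$, and $a+b-2k$, $a-b+2k$ at $\pm1$. Your version simply collects these into one sign table.

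One harmless slip: at $1$ the bracket should read $(a+bx-k(1+x))r+(1-x^2)r'$, with a plus sign. This does not change its value $(a+b-2k)r(1)$ at $x=1$.
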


\begin{proof}
Observe that $(c,d)$ does not contain any element of $Z(p) \cup \{\pm 1\}$, since $c$ and $d$ are consecutive elements.
We proceed in four steps depending on whether any of $c$, $d$ are $\pm 1$. The proof is slightly different, but the arguments are the same.

\medskip
\noindent
\textit{Step 1.} 
Assume that neither $c$ nor $d$ are~$\pm 1$. Necessarily, $c$ and $d$ are zeros of $p$. Then, $p(x) = (x-c)^n (d-x)^m r(x)$, where $n, m \in \NN$ and $r(x)$ is some polynomial not vanishing in $[c,d]$ and, therefore, with constant sign there. Thus, $q(x) = (x-c)^{n-1} (d-x)^{m-1} s(x)$, where
\begin{multline*}
  s(x) = (1-x^2) \Big( n(d-x) r(x) - m(x-c)r(x) + (x-c)(d-x) r'(x) \Big)
  \\
  + (a+bx)(x-c)(d-x)r(x).
\end{multline*}
In particular,
\begin{align*}
  s(c) &= (1-c^2) n (d-c) r(c),
  \\
  s(d) &= - (1-d^2) m (d-c) r(d).
\end{align*}
Since $[c, d ] \cap \{\pm 1\} = \emptyset$, either $c, d \in (-1,1)$ or $c, d < -1$, or $c, d > 1$. In any case $\sgn(1-c^2) = \sgn(1-d^2)$. Since $r$ has a constant sign in $[c,d]$, we deduce that $s(c) s(d) < 0$, so that $s(x)$ has an odd number of zeros in $(c,d)$, which is also a zero of $q(x)$. The condition $b + |a| < 0$ is not necessary in this case.

\medskip
\noindent
\textit{Step 2.} 
Assume that $d = -1$. Then, $p(x) = (x-c)^n (d-x)^m r(x)$ as before, with the only difference that $m \in \NN$ or $m = 0$ depending on whether $p(d) = 0$ or not. In any case, we can now write $q(x) = (x-c)^{n-1} (d-x)^m s(x)$, where
\[
  s(x) = (1-x^2) n r(x) + (1-x)m(x-c)r(x) + (1-x^2)(x-c) r'(x)
  + (a+bx)(x-c) r(x)
\]
and $r(x)$ has constant sign of $[c,d]$. In particular,
\begin{align*}
  s(c) &= (1-c^2) n r(c),
  \\
  s(d) &= - (b-a-2m) (d-c) r(d).
\end{align*}
Therefore
\[
  s(c) s(d) = - (b-a-2m) (d-c) (1-c^2) n r(c) r(d) < 0,
\]
due to the condition $b - a < 0$. Again, this gives an odd number of zeros of $s$ and $q$ in $(c,d)$.

\medskip
\noindent
\textit{Step 3.} 
Assume that $c = -1 < d < 1$. Let us write $p(x) = (x-c)^n (d-x)^m r(x)$ as before, where $n \in \NN$ or $n = 0$ depending on whether $p(c) = 0$ or not, and $m \in \NN$. As always, $r(x)$ has constant sign in $[c,d]$. Then, $q(x) = (x-c)^n (d-x)^{m-1} s(x)$, with
\[
  s(x) = (1-x) n(d-x) r(x) - (1-x^2) m r(x) + (1-x^2) (d-x) r'(x)
  + (a+bx)(d-x)r(x).
\]
Now
\begin{align*}
  s(c) &= - (b-a-2n) (d-c) r(c),
  \\
  s(d) &= - (1-d^2) m r(d),
\end{align*}
and
\[
  s(c) s(d) = (b-a-2n) (d-c) (1-d^2) m r(c) r(d) < 0,
\]
due to the condition $b - a < 0$. This proves that $s$ and $q$ have an odd number of zeros in $(c,d)$.

\medskip
\noindent
\textit{Step 4.} 
Assume now that $c = -1$ and $d = 1$. Then $p(x) = (x-c)^n (d-x)^m r(x)$ as before, where now $n, m \in \NN \cup \{ 0 \}$ and $r(x)$ has constant sign in $[-1,1]$. Then, $q(x) = (x-c)^n (d-x)^m s(x)$, where
\[
  s(x) = n(d-x) r(x) - m(x-c)r(x) + (x-c)(d-x) r'(x)
  + (a+bx) r(x).
\]
Now
\begin{align*}
  s(c) &= - (b-a-2n) r(c),
  \\
  s(d) &= (b + a - 2m) r(d),
\end{align*}
and
\[
  s(c) s(d) = - (b-a-2n) (b + a - 2m) r(c) r(d) < 0,
\]
where both conditions $b \pm a < 0$ are used. This proves that $s$ and $q$ have an odd number of zeros in $(c,d)$.

\medskip
\noindent
\textit{Step 5.} 
Finally, if $-1 < c < 1 = d$ or $c = 1$, the proofs are similar to the cases $c = -1 < d < 1$ and $d = -1$, respectively. The condition $a + b < 0$ is used here.
\qedhere
\end{proof}

\begin{Lem}
Let $p$ be a polynomial (non-constant, with real coefficients), $a, b \in \RR$ such that $b + |a| < 0$, and define
\[
  q(x) = (1-x^2) p'(x) + (a+bx) p(x).
\]
Assume that all the zeros of $p$ are real. Then, all the zeros of $q$ are real and can be described as follows:
\begin{itemize}
\item If $c \ne \pm 1$ is a zero of $p$ with multiplicity $n$, then it is a zero of $q$ with multiplicity $n-1$.

\item If $-1$ is a zero of $p$, then it is a zero of $q$ with the same multiplicity. If $1$ is a zero of $p$, then it is a zero of $q$ with the same multiplicity.

\item Let $Z(p)$ be the set of zeros of $p$; if $c < d$ are consecutive elements of the set $Z(p) \cup \{\pm 1\}$, then $q$ has exactly one zero in $(c,d)$, with multiplicity $1$ (that is, one simple zero).
\end{itemize}
\end{Lem}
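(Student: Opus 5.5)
The plan is a counting argument: find enough real zeros of $q$ to account for its whole degree, then read off multiplicities and reality from equality.

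\emph{Degree.} Write $N=\deg p\ge 1$. The coefficient of $x^{N+1}$ in $q$ is $(b-N)$ times the leading coefficient of $p$. Since $b<-|a|\le 0<N$, we get $\deg q = N+1$ exactly.

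\emph{Multiplicities at the zeros of $p$.}
\begin{itemize}
\item If $c\ne\pm1$ is a zero of $p$ of multiplicity $n$, factor $p=(x-c)^n r$ with $r(c)\ne0$. Then $q=(x-c)^{n-1}\bigl[(1-x^2)(nr+(x-c)r')+(a+bx)(x-c)r\bigr]$. The bracket equals $(1-c^2)n r(c)\ne 0$ at $x=c$, so $c$ is a zero of $q$ of multiplicity exactly $n-1$.
\item If $-1$ is a zero of multiplicity $n\ge1$, write $p=(x+1)^n r$. The computation of Step~2/Step~3 in Lemma~\ref{depaq2} gives $q=(x+1)^n s$ with $s(-1)=-(b-a-2n)r(-1)\cdot(\text{nonzero factor})$. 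Since $b-a<0$, this is nonzero, so the multiplicity is exactly $n$.
\item The point $1$ is handled the same way, using $b+a-2m<0$.
\end{itemize}

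\emph{Zeros in the gaps.} Let $Z=Z(p)\cup\{\pm1\}$ and let $k=\card Z$. Then $Z$ determines $k-1$ open gaps between consecutive elements. By Lemma~\ref{depaq2}, $q$ has an odd number, hence at least one, zero in each gap. These zeros are not in $Z$, so they are distinct from the zeros already counted.

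\emph{Counting.} Since all zeros of $p$ are real, the sum of their multiplicities is $N$. Let $\epsilon$ be the number of points among $\pm1$ that are \emph{not} zeros of $p$, so $k=\card Z(p)+\epsilon$. The zeros of $q$ found so far, counted with multiplicity, number at least
\[
\sum_{c\in Z(p)\setminus\{\pm1\}}(m_c-1)+\sum_{c\in Z(p)\cap\{\pm1\}} m_c+(k-1).
\]
This equals $N-\card\bigl(Z(p)\setminus\{\pm1\}\bigr)+\card Z(p)+\epsilon-1$. Now $\card Z(p)-\card\bigl(Z(p)\setminus\{\pm1\}\bigr)=2-\epsilon$, so the total is
\[
N+(2-\epsilon)+\epsilon-1=N+1=\deg q .
\]

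\emph{Conclusion.} The lower bound already equals $\deg q$, so every inequality used is an equality. Each gap contains exactly one zero of $q$, and it is simple. The multiplicities at the points of $Z(p)$ are exactly as stated. There are no other zeros, so in particular all zeros of $q$ are real.

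The only delicate step is this bookkeeping of $\pm1$, together with checking that the multiplicity at $\pm1$ is exactly preserved rather than increased. Both come directly from the nonvanishing of $b\mp a-2n$, which is where the hypothesis $b+|a|<0$ enters.
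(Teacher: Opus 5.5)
Your proof is correct and follows the same route as the paper. Lower bounds on the multiplicities at the points of $Z(p)$, plus at least one zero in each gap of $Z(p)\cup\{\pm1\}$ (from Lemma~\ref{depaq2}), already add up to $\deg q=\deg p+1$, which forces every bound to be an equality; the only difference is that you check the exact multiplicities directly, whereas the paper records only ``at least'' and lets this degree count force equality, so that part of your argument is correct but redundant.
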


\begin{proof}
Let us note that under the condition $b + |a| < 0$, $q$ is a polynomial with degree $1 + \deg p$.

Let $Z(p) \setminus \{\pm 1\} = \{\xi_1 < \xi_2 < \dots < \xi_j\}$ be the set of zeros of $p$ not equal to $\pm 1$ (it could be the empty set), and let $n_1, \dots, n_j$ be their respective multiplicities. Let $n_{-}$ and $n_{+}$ be the multiplicities of $-1$ and $1$, respectively, as zeros of $p$, where $n_{-} = 0$ means that $-1$ is not a zero of $p$, and the same with~$n_{+}$. Then,
\[
  n_1 + \dots + n_j + n_{-} + n_{+} = \deg p.
\]
It is immediate that each $\xi_k$ is a zero of $q$ with multiplicity \emph{at least} $n_k-1$. And if $\pm 1$ is a zero of $p$, then it is also a zero of $q$ with \emph{at least} the same multiplicity. The previous lemma proves that if $c < d$ are consecutive elements of the set $Z(p) \cup \{\pm 1\}$, then $q$ has \emph{at least} one zero in $(c,d)$, which gives at least $j+1$ zeros. So, we only need to remove those ``at least''. Thus, we already have
\[
  (n_1 - 1) + \dots + (n_j - 1) + n_{-} + n_{+} + j + 1
  = n_1 + \dots + n_j + n_{-} + n_{+} + 1
  = \deg q
\]
zeros, taking into account their multiplicities, and no more zeros or multiplicities can occur.
\end{proof}

As a particular case of the previous lemma, we have the following.

\begin{Lem}
\label{depaq}
Let $p$ be a polynomial with real coefficients and define
\[
  q(x) = (1-x^2) p'(x) + (a + b x) p(x),
\]
where $a$ and $b$ are real numbers such that $|a| + b < 0$. If all the zeros of $p$ are simple and belong to the interval $(-1,1)$, then all the zeros of $q$ are simple, belong to the interval $(-1,1)$, and interlace the zeros of~$p$.
\end{Lem}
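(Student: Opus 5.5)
This is a direct specialization of the preceding (unlabelled) lemma. I will apply that lemma to $p$ and then read off the zero configuration of $q$.

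First, all zeros of $p$ are real, since by hypothesis they lie in $(-1,1)$, and $|a|+b<0$ is the hypothesis of the previous lemma. We may also assume $p$ is non-constant. If $p$ is a nonzero constant, then $q=(a+bx)p$ has the single zero $-a/b$. This zero lies in $(-1,1)$ because $|a|<-b$. The interlacing is then trivially true, since the zero set of $p$ is empty and $U$ having one element vacuously interlaces $V=\emptyset$.

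Now write the zeros of $p$ as $\xi_1<\dots<\xi_j$, all simple and all in $(-1,1)$, with $j=\deg p$. Then $\pm1\notin Z(p)$, and the consecutive elements of $Z(p)\cup\{\pm1\}$ are
\[
-1<\xi_1<\xi_2<\dots<\xi_j<1 .
\]
The previous lemma then gives the following:
\begin{itemize}
\item each $\xi_k$ has multiplicity $n_k-1=0$ in $q$, so it is not a zero of $q$;
\item $\pm1$ are not zeros of $q$, since they are not zeros of $p$;
\item each of the $j+1$ intervals $(-1,\xi_1),(\xi_1,\xi_2),\dots,(\xi_j,1)$ contains exactly one simple zero of $q$;
\item there are no other zeros.
\end{itemize}
Since $\deg q=\deg p+1=j+1$, this accounts for all zeros of $q$. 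Hence they are simple and lie in $(-1,1)$.

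Finally, I check Definition~\ref{dip} with $U=Z(q)$ and $V=Z(p)$. Let $\eta_0<\dots<\eta_j$ be the zeros of $q$. Then
\[
\eta_0<\xi_1<\eta_1<\xi_2<\dots<\xi_j<\eta_j .
\]
So $\min U=\eta_0<\xi_1=\min V$, and between any two consecutive elements of either set there is an element of the other. This is exactly the statement that the zeros of $q$ interlace those of $p$.

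There is no real obstacle here. The only point needing care is confirming that $\deg q=\deg p+1$, i.e.\ that the leading coefficient $b-\deg p$ is nonzero; this holds because $b<-|a|\le0$. Beyond that, the argument is bookkeeping on the ordering convention in Definition~\ref{dip}.
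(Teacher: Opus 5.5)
Your proposal is correct and follows the paper's approach: the paper offers no separate argument and simply states this lemma as a particular case of the preceding (unlabelled) lemma, which is exactly how you derive it. Your additional checks (the constant case, the nonvanishing of the leading coefficient $b-\deg p$, and the verification of Definition~\ref{dip}) spell out details the paper leaves implicit.
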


We are now ready to prove Theorem~\ref{ptci} in the introduction, which shows that under the assumptions $|\phi_n| + \psi_n < 0$, $n\ge 0$, the zeros of $\spp_n$ have nice properties.

\begin{proof}[Proof of Theorem~\ref{ptci}]
Since $s_1(x) = \phi_1 + \psi_1 x$, the condition $\psi_1 + |\phi_1| < 0$ proves that its only zero is $- \frac{\phi_1}{\psi_1} \in (-1,1)$. Now the theorem follows easily from Lemma~\ref{depaq} by induction on~$n$.
\end{proof}

\section{On the symmetric dependence of \texorpdfstring{$\spp_n$}{snphipsi} on the parameters}
\label{sec4}

The case when the sequence $\psi$ is constant is especially interesting, as we will see later. It turns out that in this case the coefficients of $\spp_n$ are symmetric functions of $\phi_1, \phi_2,\allowbreak \dots, \phi_n$. A natural question is whether this symmetry in $\phi_1, \phi_2,\allowbreak \dots, \phi_n$ also holds for other sequences~$\psi$. In other terms, for which sequences $\psi$ is $\spp_n$ invariant under any permutation of $(\phi_1, \phi_2, \dots, \phi_n)$; in turn, this reduces to the invariance under any iteration of single permutations $(\phi_j, \phi_{j+1}) \mapsto (\phi_{j+1}, \phi_j)$.

So for each $a, b \in \RR$, let us take
\[
  \Omega(a,b) p(x) = (1-x^2) p'(x) + (a + bx) p(x).
\]
It is easy to check that, for any $a, b, c, d \in \RR$,
\begin{equation}
\label{Omegacdab}
\begin{aligned}
  &\Omega(c,d) \Omega(a,b) p(x)
  = - 2x (1-x^2) p'(x) + (1-x^2) p''(x) + b(1-x^2) p(x)
  \\
  &\qquad\qquad + (a + c + bx + dx) (1-x^2) p'(x)
  + (c+dx)(a+bx) p(x). 
\end{aligned}
\end{equation}
Thus, given a not null polynomial $p(x)$, it is plain that
\[
  \Omega(c,d) \Omega(a,b) p = \Omega(a,d) \Omega(c,b) p
  \iff (a-c) (b-d) = 0.
\]
Since $\spp_{j+1} = \Omega(\phi_{j+1},\psi_{j+1}) \Omega(\phi_j, \psi_j) \spp_{j-1}$,
it follows that $\spp$ is invariant under a single permutation $(\phi_j, \phi_{j+1}) \mapsto (\phi_{j+1}, \phi_j)$ if and only if
\[
  (\phi_j - \phi_{j+1}) (\psi_j - \psi_{j+1}) = 0.
\]
And, excluding the trivial case when the sequence $\phi = (\phi_1, \phi_2, \dots)$ is constant, we deduce that $\spp$ is invariant under any iteration of those single permutations if and only if $\psi = (\psi_1, \psi_2, \dots)$ is constant.

We might ask also for which sequences $\phi$ is $\spp_n$ a symmetric function of $\psi_1,\allowbreak \psi_2,\allowbreak \dots,\allowbreak \psi_n$, that is, invariant under any iteration of single permutations $(\psi_j, \psi_{j+1}) \mapsto (\psi_{j+1}, \psi_j)$. We conclude with no effort from~\eqref{Omegacdab} that, given a not null polynomial~$p$,
\[
  \Omega(c,d) \Omega(a,b) p = \Omega(c,b) \Omega(a,d) p
  \iff b = d.
\]
Therefore, $\spp_n$ is a symmetric function of $\psi_1,\psi_2, \dots, \psi_n$ only in the trivial case when $\psi$ is a constant sequence. The same conclusion follows if we look for joint symmetry in both parameters, that is, invariance under iteration of single permutations $\big((\phi_j,\psi_j), (\phi_{j+1},\psi_{j+1}) \big) \mapsto \big((\phi_{j+1},\psi_{j+1}), (\phi_j,\psi_j) \big)$.

As we have just proved, when the sequence $\psi_n$ is constant, the polynomial $\spp_n$ is a symmetric function of $\phi_1,\dots, \phi_n$ (and reciprocally). This case is of particular importance because then the zeros of the polynomials $\spp_n$ enjoy more properties. Hence, let us consider the case when $\psi_n = u$ for every $n \ge 1$, for some fixed $u \in \RR$. If we define the first order differential operator $\Lambda_u$ by
\[
  \Lambda_up(x) = (1-x^2)p'(x)+uxp(x),
\]
then the polynomials $s_n^{\phi,u}$ are given by the recursion
\begin{align*}
  s_0^{\phi,u} &= 1,
  \\
  s_{n+1}^{\phi,u} &= (\Lambda_u + \phi_{n+1} I) s_n^{\phi,u},
\end{align*}
where $I$ is the identity operator $I p = p$. Thus,
\begin{equation}
\label{snLphi}
  s_n^{\phi,u}
  = (\Lambda_u + \phi_n I) (\Lambda_u + \phi_{n-1} I) \cdots (\Lambda_u + \phi_1 I) s_0.
\end{equation}
Since $\Lambda_u$ commutes with all the operators $\phi_j I$, this proves that
\[
  s_n^{\phi,u} = \sum_{j=0}^n \Phi_{n-j}^n \Lambda_u^j s_0,
\]
where $\Phi_{n-j}^n$ are the coefficients given by
\[
  \prod_{i=1}^n (x+\phi_i) = \sum_{j=0}^n\Phi_{n-j}^nx^j,
\]
that is,
\[
  \Phi_i^n \equiv \Phi_i^n(\phi)
  = \begin{cases}
  1, &i=0,
  \\
  \sum\limits_{1\le j_1< \cdots <j_i \le n} \phi_{j_1} \cdots \phi_{j_i},
  &1 \le i \le n.
  \end{cases}
\]

\begin{Lem}
\label{lrch}
With the above notation, we have
\begin{equation}
\label{dpbg}
  s_n^{\phi,u} = \sum_{j=0}^n \Phi_{n-j}^n s_j^{0,u}
\end{equation}
for each $n \ge 0$.
Moreover, for all $l \ge 1$ and $n\ge l-1$, we have
\begin{equation}
\label{rrgpl}
  s_{n+1}^{\phi,u}(x) = (1-x^2)(s_n^{\phi^{\{ l\}},u})'(x)
  + (\phi_l + u x) s_n^{\phi^{\{ l\}},u}(x),
\end{equation}
where $\phi^{\{l\}}$ is the sequence obtained by removing the $l$-th term $\phi_l$ from~$\phi$.
\end{Lem}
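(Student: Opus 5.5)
The plan is to work throughout with the operator factorization \eqref{snLphi}, $s_n^{\phi,u}=(\Lambda_u+\phi_n I)\cdots(\Lambda_u+\phi_1 I)s_0$. Everything rests on the fact that $\Lambda_u$ commutes with scalar multiples of the identity, so all factors $\Lambda_u+\phi_i I$ commute with one another.

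\emph{First identity \eqref{dpbg}.} Since the factors commute, the product of operators can be expanded exactly like the polynomial $\prod_{i=1}^n(x+\phi_i)$, with $x$ replaced by $\Lambda_u$. This gives
\[
s_n^{\phi,u}=\sum_{j=0}^n\Phi^n_{n-j}\Lambda_u^j s_0,
\]
as already noted before the lemma. It remains to identify $\Lambda_u^j s_0$. Taking $\phi=0$ in \eqref{snLphi} gives $s_j^{0,u}=\Lambda_u^j s_0$, and substituting yields \eqref{dpbg}. If more rigor is wanted, I would add a one-line induction on $n$, using
\[
\Phi^{n+1}_{k}=\Phi^n_k+\phi_{n+1}\Phi^n_{k-1},
\]
which comes from multiplying $\prod_{i=1}^n(x+\phi_i)$ by $x+\phi_{n+1}$.

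\emph{Second identity \eqref{rrgpl}.} Fix $l\ge1$ and $n\ge l-1$, so that $\phi_1,\dots,\phi_{n+1}$ contains $\phi_l$. By \eqref{snLphi},
\[
s_{n+1}^{\phi,u}=\prod_{i=1}^{n+1}(\Lambda_u+\phi_iI)\,s_0 .
\]
Commutativity lets me move the factor $\Lambda_u+\phi_l I$ to the far left:
\[
s_{n+1}^{\phi,u}=(\Lambda_u+\phi_lI)\prod_{i\le n+1,\ i\ne l}(\Lambda_u+\phi_iI)\,s_0 .
\]
The remaining product uses exactly the first $n$ terms of $\phi^{\{l\}}$, namely $\phi_1,\dots,\phi_{l-1},\phi_{l+1},\dots,\phi_{n+1}$. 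Hence, by \eqref{snLphi} again, it equals $s_n^{\phi^{\{l\}},u}$. Writing out $(\Lambda_u+\phi_lI)p=(1-x^2)p'+(\phi_l+ux)p$ then gives \eqref{rrgpl}.

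\emph{Main obstacle.} There is no real obstacle. The only points needing care are the index bookkeeping, namely that $n\ge l-1$ guarantees $\phi_l$ actually appears among the first $n+1$ parameters, and the check that the first $n$ entries of $\phi^{\{l\}}$ are exactly the remaining parameters. Both are immediate.
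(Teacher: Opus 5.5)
Your proposal is correct and follows essentially the same route as the paper: expand the commuting product \eqref{snLphi} to get $\sum_{j}\Phi^n_{n-j}\Lambda_u^j s_0$, identify $\Lambda_u^j s_0=s_j^{0,u}$, and move the factor $\Lambda_u+\phi_l I$ to the front to obtain \eqref{rrgpl}. Your index bookkeeping, with the remaining factors running over $\phi_1,\dots,\phi_{l-1},\phi_{l+1},\dots,\phi_{n+1}$, is in fact slightly more careful than the paper's displayed reordering, which writes $\phi_n$ where $\phi_{n+1}$ is meant.
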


\begin{proof}
We have already obtained the identity
\[
  s_n^{\phi,u} = \sum_{j=0}^n \Phi_{n-j}^n \Lambda_u^j s_0.
\]
Now, a look at~\eqref{sn+1} shows that $\Lambda_u$ defines the polynomials $s_n^{0,u}$, that is,
$\Lambda_u^j s_0 = s_j^{0,u}$. This proves~\eqref{dpbg}. Since $\Lambda_u$ commutes with all the operators $\phi_j I$, the identity~\eqref{snLphi} can be reordered so as to get
\begin{align*}
  s_{n+1}^{\phi,u}
  &= (\Lambda_u + \phi_l I) (\Lambda_u + \phi_n I) \cdots
  (\Lambda_u + \phi_{l+1} I)(\Lambda_u + \phi_{l-1} I) \cdots (\Lambda_u + \phi_1 I) s_0
  \\
  &= (\Lambda_u + \phi_l I) s_n^{\phi^{\{ l\}},u},
\end{align*}
which is~\eqref{rrgpl}.
\end{proof}

For a positive integer $l$ and a real number $M$ write $\phi^{l,M}$ for the sequence
\begin{equation}
\label{sph}
  \phi^{l,M}_i = \phi_i+M\delta_{i,l}
\end{equation}
(where $\delta_{i,l}$ denotes de Kronecker delta).

\begin{Theo}
Let $\phi = (\phi_j)_{j\ge 1}$ be a sequence of real numbers and $u$ a real number such that $u + |\phi_j| < 0$ for all $j \ge 1$. Fix some $n \in \NN$. Then:
\begin{enumerate}
\item All the zeros of the polynomial $\spu_{n+1}$ are simple, belong to the interval $(-1,1)$ and, for each $l \ge 1$, interlace with the zeros of $s_n^{\phi^{\{l\}},u}$.

\item Let $\zeta_1(\phi) < \zeta_2(\phi) < \dots < \zeta_n(\phi)$ be the zeros of $\spu_n$. Then, $\zeta_k(\phi)$ is an increasing function of $\phi$. More precisely, if $\rho = (\rho_j)_{j\ge 1}$ is such that $\phi_j \le \rho_j$ and $u + |\rho_j| < 0$ for all $j \ge 1$, then $\zeta_k(\phi) \le \zeta_k(\rho)$.

\item Fix some $l \in \NN$ with $l \le n$ and some real number $M > 0$ such that $u + |\phi_l + M| < 0$. Let $\tilde{\phi} = (\tilde{\phi}_j)_{j \ge 1}$ be given by $\tilde{\phi}_l = \phi_l + M$ and $\tilde{\phi}_j = \phi_j$ for $j \ne l$. Then, the zeros of $\spu_n$ interlace the zeros of~$s_n^{\tilde{\phi},u}$.
\end{enumerate}
\end{Theo}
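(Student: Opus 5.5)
Part (1) follows directly from Lemma~\ref{lrch} and Lemma~\ref{depaq}. By \eqref{rrgpl}, $\spu_{n+1}=(\Lambda_u+\phi_l I)s_n^{\phi^{\{l\}},u}$. The sequence $\phi^{\{l\}}$ still satisfies $u+|\phi^{\{l\}}_j|<0$ for all $j$. Hence, by Theorem~\ref{ptci}, the zeros of $s_n^{\phi^{\{l\}},u}$ are simple and lie in $(-1,1)$. Applying Lemma~\ref{depaq} with $a=\phi_l$ and $b=u$ (so that $|a|+b<0$) shows that the zeros of $\spu_{n+1}$ are simple, lie in $(-1,1)$, and interlace those of $s_n^{\phi^{\{l\}},u}$.

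For (3), I use the symmetry to move the modified parameter to the last position. With $r=s_{n-1}^{\phi^{\{l\}},u}$, equation \eqref{rrgpl} gives
\[
  \spu_n=(\Lambda_u+\phi_l I)r, \qquad s_n^{\tilde\phi,u}=(\Lambda_u+\phi_l I)r+Mr=\spu_n+Mr .
\]
By part (1), both $\spu_n$ and $s_n^{\tilde\phi,u}$ have $n$ simple zeros in $(-1,1)$ that interlace the $n-1$ zeros $\eta_1<\dots<\eta_{n-1}$ of $r$, which also lie in $(-1,1)$. So each of $\spu_n$ and $s_n^{\tilde\phi,u}$ has exactly one zero in each of the intervals $(-1,\eta_1),(\eta_1,\eta_2),\dots,(\eta_{n-1},1)$. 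It remains to compare the two zeros inside each such interval.

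Let $\zeta$ be the zero of $\spu_n$ in one of these intervals. There $s_n^{\tilde\phi,u}(\zeta)=Mr(\zeta)$, and the sign of $r$ on the interval is known. The sign pattern of $\spu_n$ on the same interval follows from its value at $1$ via \eqref{va1} and its simple sign changes, or equivalently from the relation $\spu_n(\eta_k)=(1-\eta_k^2)r'(\eta_k)$. Combining these, the sign of $s_n^{\tilde\phi,u}(\zeta)$ tells which side of $\zeta$ the zero of $s_n^{\tilde\phi,u}$ lies on. I expect the outcome is that $\zeta$ lies strictly to the left of the corresponding zero $\tilde\zeta$ in every interval. Together with the fact that the intervals separate consecutive zeros, this gives $\zeta_1<\tilde\zeta_1<\zeta_2<\dots$, which is interlacing in the sense of Definition~\ref{dip}, with $\min$ attained by the zeros of $\spu_n$.

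The main obstacle is this sign bookkeeping. It must be checked uniformly over all intervals, including the end intervals $(-1,\eta_1)$ and $(\eta_{n-1},1)$. The key input is that at each $\eta_k$ we have $\spu_n(\eta_k)=(1-\eta_k^2)r'(\eta_k)$, whose sign alternates exactly like $r'$. A cleaner route would be a Wronskian-type argument: the zeros of $\spu_n+tr$ move monotonically in $t$, since $\zeta'(t)=-r/(\spu_n+tr)'$ evaluated at the zero, and this quantity has a constant sign by the interlacing just established. Since for $t\in[0,M]$ no zero can cross a point $\eta_k$ or leave $(-1,1)$ (each intermediate polynomial is again of the form covered by Theorem~\ref{ptci}, because $u+|\phi_l+t|<0$ by convexity), the zeros move strictly right and stay in their intervals, which gives strict interlacing.

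Part (2) then follows from (3). Passing from $\phi$ to $\rho$ changes one coordinate at a time, and only the first $n$ coordinates matter. Each single increase $\phi_l\mapsto\rho_l$ either leaves the polynomial unchanged (if $M=0$) or, by (3), moves every $\zeta_k$ strictly to the right. This requires every intermediate sequence to satisfy the hypothesis. That holds because each intermediate coordinate equals either $\phi_j$ or $\rho_j$, both of which satisfy $u+|\cdot|<0$. Hence $\zeta_k(\phi)\le\zeta_k(\rho)$.
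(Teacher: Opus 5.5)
Your proposal is correct and is essentially the paper's proof: the paper also writes $s_n^{\rho,u}=\spu_n+M\,s_{n-1}^{\phi^{\{j\}},u}$ via \eqref{rrgpl}, uses that each of the two polynomials has exactly one zero between consecutive zeros of $s_{n-1}^{\phi^{\{j\}},u}$, and compares signs to get $\zeta_k(\phi)<\zeta_k(\rho)$ when one coordinate changes; it then iterates for (2) and notes that (3) is the same computation. The sign check you left as ``expected'' closes exactly as you predict. Take $\eta_0=-1$. By $\spu_n(-1)>0$ and $\spu_n(\eta_k)=(1-\eta_k^2)r'(\eta_k)$, the polynomial $\spu_n$ has sign $(-1)^k$ on $(\eta_k,\zeta)$, and $r$ has that same sign on all of $(\eta_k,\eta_{k+1})$. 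Hence $\spu_n+Mr\neq0$ on $(\eta_k,\zeta]$, so the zero of $s_n^{\tilde\phi,u}$ in that interval lies to the right of $\zeta$.
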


\begin{proof}
\begin{enumerate}
\item This follows from~\eqref{rrgpl} and Theorem~\ref{ptci}.

\item It is enough to prove the statement for sequences $\phi$ and $\rho$ differing only in one term, say $\rho_j = \phi_j + M$ for some $j \le n$ and some $M > 0$, and $\rho_k = \phi_k$ if $k \ne j$. The general case follows then by iteration.

If we abbreviate
$v_{n-1} = s_{n-1}^{\phi^{\{j\}},u}$, $s_n = \spu_n$, and $t_n = s_n^{\rho,u}$, 
identity~\eqref{rrgpl} gives
\begin{align*}
  s_n(x) &= (1-x^2) v_{n-1}'(x) + (\phi_j + ux) v_{n-1}(x),
  \\
  t_n(x) &= (1-x^2) v_{n-1}'(x) + (\phi_j + M + ux) v_{n-1}(x),
\end{align*}
that is,
\begin{equation}
\label{mcc}
  t_n(x) = s_n(x) + M v_{n-1}(x).
\end{equation}
Now, let $\xi_1 < \dots < \xi_{n-1}$, $\sigma_1 < \dots < \sigma_n$ and $\tau_1 < \dots < \tau_n$ be the zeros of $v_{n-1}$, $s_n$ and $t_n$, respectively. We want to prove that $\sigma_k < \tau_k$ for each $k$. By the interlacing property,
\begin{align*}
  -1 < \sigma_1 &< \xi_1, & \xi_k &< \sigma_{k+1} < \xi_{k+1},
  & \xi_{n-1} &< \sigma_n < 1 \\
  -1 < \tau_1 &< \xi_1, & \xi_k &< \tau_{k+1} < \xi_{k+1},
  & \xi_{n-1} &< \tau_n < 1.
\end{align*}
Observe also that under the condition $u + |\phi_n| < 0$, the recursive definition~\eqref{snLphi} of the polynomials $s_n^{\phi,u}$ gives $s_n(-1) > 0$, $t_n(-1) > 0$, and $v_{n-1}(-1) > 0$. Since all the zeros are simple, this leads to $\sgn v_{n-1} = (-1)^k$ in $(\xi_k,\xi_{k+1})$, and $\sgn s_n = (-1)^k$ in $(\sigma_k, \sigma_{k+1})$, so that, in $(\xi_k, \sigma_{k+1}]$,
\[
  t_n(x) = s_n(x) + M v_{n-1}(x) \ne 0,
\]
and this shows that $\sigma_{k+1} < \tau_{k+1}$. The proof that $\sigma_1 < \tau_1$ and $\sigma_n < \tau_n$ is similar.

\item This is actually what was proved above.
\qedhere
\end{enumerate}
\end{proof}

In particular, Theorem~\ref{ptci} gives the following.

\begin{Cor}
\label{ptc2}
Let $u<0$. Assuming that $|\phi_i| < -u$, $i\ge 1$, then all the zeros of polynomial $\spu_n$ are real, simple and live in $(-1,1)$. Moreover, the zeros of $s_{n+1}^{\phi,u}$ interlace the zeros of $\spu_n$.
\end{Cor}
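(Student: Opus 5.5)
This corollary is the special case $\psi_i = u$ for all $i\ge 1$ of Theorem~\ref{ptci}, so the plan is a direct specialization rather than a new argument. The hypothesis of Theorem~\ref{ptci} is $|\phi_n| + \psi_n < 0$ for every $n$. With $\psi_n = u$ this reads $|\phi_n| + u < 0$, which is exactly the assumption $|\phi_i| < -u$. The standing hypothesis $u<0$ is then automatic, and is recorded only to make the condition meaningful.

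Before invoking the theorem, I will check that $\spu_n$ is well defined and has degree exactly $n$. The definition of $\spp_n$ requires $\psi_j \ne j-1$, that is, $u \ne j-1$ for all $j\ge 1$. Since $u<0\le j-1$, this holds. In particular the leading coefficient $\prod_{j=1}^n(u-j+1)$ is nonzero, so $\deg \spu_n = n$.

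Applying Theorem~\ref{ptci} then gives that all zeros of $\spu_n$ are simple and lie in $(-1,1)$. Since $\spu_n$ has real coefficients and degree $n$, and the theorem provides $n$ zeros there, all of its zeros are real. The theorem also gives directly that the zeros of $s_{n+1}^{\phi,u}$ interlace those of $\spu_n$.

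If one wants to avoid quoting the theorem, the same conclusion follows by induction using Lemma~\ref{depaq} with $a=\phi_{n+1}$ and $b=u$, where $|a|+b<0$. The base case is $s_1^{\phi,u}(x)=\phi_1+ux$, whose only zero $-\phi_1/u$ lies in $(-1,1)$ because $|\phi_1|<-u$. There is no real obstacle here: the only point needing care is the nondegeneracy condition $u\ne j-1$, which $u<0$ settles.
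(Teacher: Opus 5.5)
Your proposal is correct and matches the paper, which obtains Corollary~\ref{ptc2} directly as the special case $\psi_i=u$ of Theorem~\ref{ptci}. Your extra checks are sound: the verification that $u\ne j-1$, so that $\deg \spu_n=n$, is valid, and your alternative induction via Lemma~\ref{depaq} simply reproduces the paper's own proof of Theorem~\ref{ptci}.
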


This can be slightly improved.

\begin{Cor}
\label{ptc3}
Let $u<0$. Assume that $\spu_n (\pm 1) \ne 0$ and that the set $\{i:|\phi_i|> -u,\, 1\le i\le n\}$ has $N$ elements. Then the polynomial $\spu_n$ has at least $n-N$ zeros in $(-1,1)$.
\end{Cor}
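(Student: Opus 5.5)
The plan is to reorder the parameters using the symmetry of $\spu_n$ in $\phi_1,\dots,\phi_n$. We then build the polynomial in two stages: first from the ``good'' parameters, using Corollary~\ref{ptc2}, and then from the $N$ ``bad'' ones, showing that each bad step loses at most one zero in $(-1,1)$.

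\emph{Step 1: the hypothesis $\spu_n(\pm1)\neq0$.} By \eqref{va-1} and \eqref{va1}, $\spu_n(\pm 1)=\prod_{j=1}^n(\phi_j\pm u)$. So the hypothesis says exactly that $\phi_j\neq \pm u$ for every $j\le n$, that is, $|\phi_j|\neq -u$. Hence every index $j\le n$ satisfies either $|\phi_j|<-u$ (good) or $|\phi_j|>-u$ (bad), and there are exactly $N$ bad ones.

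\emph{Step 2: reordering.} Since $\psi\equiv u$ is constant, \eqref{snLphi} and the commutativity of the operators $\Lambda_u+\phi_jI$ let us permute $\phi_1,\dots,\phi_n$ freely without changing $\spu_n$. We reorder so that the $n-N$ good parameters come first, followed by the $N$ bad ones. Call the reordered sequence $\rho$, so that $\spu_n=s_n^{\rho,u}$.

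\emph{Step 3: the good stage.} Since $u<0$, Corollary~\ref{ptc2} applies to the first $n-N$ terms of $\rho$, all of which satisfy $|\rho_i|<-u$. It gives that $s_{n-N}^{\rho,u}$ has $n-N$ simple zeros in $(-1,1)$.

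\emph{Step 4: the bad stage.} For $m=n-N,\dots,n-1$ we have
\[
s_{m+1}^{\rho,u}=(1-x^2)\bigl(s_m^{\rho,u}\bigr)'+(\rho_{m+1}+ux)\,s_m^{\rho,u}.
\]
The key claim is the following. Suppose $p$ has exactly $k$ zeros in $(-1,1)$, counted with multiplicity, and let $q=(1-x^2)p'+(a+bx)p$. Then $q$ has at least $k-1$ zeros in $(-1,1)$, counted with multiplicity.

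To prove the claim, let the distinct zeros of $p$ in $(-1,1)$ have multiplicities $m_1,\dots,m_r$, so $\sum m_i=k$.
\begin{itemize}
\item Each such zero is a zero of $q$ of multiplicity at least $m_i-1$, because $1-x^2\neq0$ there.
\item Lemma~\ref{depaq1} supplies at least one further zero of $q$ strictly between each pair of consecutive distinct zeros, giving $r-1$ more.
\end{itemize}
The total is at least $\sum(m_i-1)+r-1=k-1$.

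Applying the claim $N$ times starting from the $n-N$ zeros of $s_{n-N}^{\rho,u}$ yields at least $n-2N$ zeros. That count is too weak, so the argument must be sharpened.

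\emph{Step 5: the sharper count (main obstacle).} We need to show that each bad step loses \emph{at most one} zero relative to the previous degree count. In fact we want more: a bad step preserves the zero count in $(-1,1)$, while the degree rises by one. This is consistent with the target $n-N$.

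To obtain this, I would count sign changes on the closed interval $[-1,1]$. The values $s_m^{\rho,u}(\pm1)=\prod_{j\le m}(\rho_j\pm u)$ are nonzero at every stage, because every factor is nonzero. At $x=1$ we have $q(1)=(a+b)p(1)$, and at $x=-1$ we have $q(-1)=(a-b)p(-1)$. Let $x_1$ and $x_r$ be the smallest and largest zeros of $p$ in $(-1,1)$.
\begin{itemize}
\item At the endpoint zero $x_r$, with multiplicity $m_r$, the leading behaviour of $q$ is $(1-x_r^2)\,m_r\,c\,(x-x_r)^{m_r-1}$, where $c$ is the leading coefficient of $p$ at $x_r$. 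For $x$ just to the right of $x_r$ this has the sign of $p'$ there, that is, the sign of $p$ just beyond $x_r$.
\item The sign of $p$ on $(x_r,1]$ equals the sign of $p(1)$.
\item Therefore, if $\operatorname{sgn} q(1)\neq\operatorname{sgn}$ of $q$ just to the right of $x_r$, the intermediate value theorem gives an extra zero of $q$ in $(x_r,1)$. The same argument at the other end gives an extra zero in $(-1,x_1)$ when the corresponding signs differ.
\end{itemize}
For a bad parameter, exactly one of $a+b$ and $a-b$ is negative (here $a=\rho_{m+1}$, $b=u$). So the sign flips at exactly one endpoint, and that endpoint yields one extra zero.

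This recovers the lost zero at each bad step, giving at least $k$ zeros after every bad step. Hence $\spu_n$ has at least $n-N$ zeros in $(-1,1)$.

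The delicate points are the two endpoint sign computations and the degenerate case where $p$ has no zeros in $(-1,1)$. I expect the endpoint sign check to be the main obstacle. In the degenerate case the bound $k-1$ is vacuous and $k=0$, so nothing needs to be proved.
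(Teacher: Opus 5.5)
Your overall strategy works, and it differs from the paper at the key step. However, the sentence that is supposed to close Step~5 is false as written.

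For a bad parameter $a=\rho_{m+1}$, $b=u$, you have $(a+b)(a-b)=a^2-u^2>0$. So $a+b$ and $a-b$ have the \emph{same} sign: both positive if $a>-u$, both negative if $a<u$. It is not true that exactly one of them is negative.

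What rescues the argument is that the flip criteria at the two ends are not symmetric.
\begin{itemize}
\item At $x_r$ you correctly found that, just to the right of $x_r$, $q$ has the sign of $p(1)$. Since $q(1)=(a+b)p(1)$, a zero in $(x_r,1)$ is forced if and only if $a+b<0$.
\item At $x_1$, write $p(x)=c'(x-x_1)^{m_1}+\cdots$. The same expansion gives, just to the left of $x_1$, $\sgn q=(-1)^{m_1-1}\sgn c'=-\sgn p(-1)$. Since $q(-1)=(a-b)p(-1)$, a zero in $(-1,x_1)$ is forced if and only if $a-b>0$.
\end{itemize}
Because $a\pm b$ share a sign, exactly one of the conditions $a+b<0$ and $a-b>0$ holds. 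It is the left one if $\rho_{m+1}>-u$ and the right one if $\rho_{m+1}<u$. With this correction your count $\sum_i(m_i-1)+(r-1)+1=k$ is valid. For a good parameter both conditions hold, which is the mechanism behind Lemma~\ref{depaq2}.

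The paper avoids this local analysis entirely. It inducts on $N$ with the same reordering, and gets the loss of at most one zero from Lemma~\ref{depaq1}, as in your Step~4. It then recovers the lost zero by parity. By \eqref{va-1} and \eqref{va1},
\[
\sgn\bigl(\spu_m(1)\spu_m(-1)\bigr)=\prod_{j\le m}\sgn(\phi_j^2-u^2),
\]
and this sign fixes the parity of the number of zeros in $(-1,1)$, counted with multiplicity. A bad step multiplies by a positive factor, so the sign does not change, and the bound $k-1$ is automatically upgraded to $k$.

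You already noted that these endpoint values are nonzero; using their signs would have finished the proof in one line. Your version is longer and more error-prone, as the slip above shows. It does give slightly more information: it locates the recovered zero to the left of the smallest zero, or to the right of the largest zero, of the previous polynomial, according to the sign of the bad parameter.
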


\begin{proof}
We proceed by induction on $N$. The case $N=0$ is just Corollary~\ref{ptc2}.

Let us note that the hypothesis $\spu_n (\pm 1) \ne 0$ is equivalent to $|\phi_i| \ne -u$ (because of~\eqref{va-1} and~\eqref{va1}). And then $\spu_{n-1} (\pm 1) \ne 0$.

Assume next that the set $\{i:|\phi_i|> -u,\, 1\le i\le n\}$ has $N+1$ elements. We have to prove that $\spu_n$ has at least $n-N-1$ zeros in $(-1,1)$.

Since $\spu_n$ is a symmetric function of the parameters $\phi_i$, we can assume that they are ordered in increasing size of $|\phi_i|$, so that
$|\phi_i|< -u$, $1\le i\le n-N-1$ and $|\phi_i|> -u$, $n-N\le i\le n$.
The induction hypothesis gives that $\spu_{n-1}$ has at least $n-N-1$ zeros in $(-1,1)$.

Using then Lemma~\ref{depaq1}, we deduce that $\spu_{n}$ would have at least $n-N-2$ zeros in $(-1,1)$.
If $n-N-1$ is even, we deduce that $\spu_{n-1}(-1)$ and $\spu_{n-1}(1)$ have to have the same sign.
Since $|\phi_n|> -u$, we deduce that $\spu_{n}(-1)$ and $\spu_{n}(1)$ have to have the same sign (because of~\eqref{va-1} and~\eqref{va1}). Taking now into account that
$n-N-2$ is odd, we deduce that $\spu_n$ has to have at least $n-N-1$ zeros in $(-1,1)$. If $n-N-1$ is odd, we can proceed similarly.
\end{proof}

When the condition $|\phi_i| < -u$ fails in Corollary~\ref{ptc2}, the polynomials $\spu_n$ can have non real zeros.

The case when $\psi$ is constant and $\phi=(\phi_i)$ is constant for $i$ big enough is particularly interesting and will be studied in Section~\ref{sec6}. Here we prove the following equivalency.

\begin{Lem}
\label{eqp}
Let $u,v, K$ be real numbers with $K\in \NN$.
 Given sequences $\phi$, $\psi$, the following conditions are equivalent.
\begin{enumerate}
\item $\psi_i=u$, $i\ge 1$, and $\phi_i=v$, $i\ge K+1$.
\item There exist real numbers $\gamma_j$, $j=0,\dots, K$, with $\gamma_0=1$, $\gamma_K \ne 0$, such that
\[
  s_n^{\phi,\psi}(x) =\sum_{j=0}^K\gamma_j\scvu_{n-j}(x),\quad n\ge K,
\]
and the polynomial $P(x)=\sum_{j=0}^K\gamma_jx^{K-j}$ has only real zeros.
\end{enumerate}
\end{Lem}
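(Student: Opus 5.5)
The plan is to work entirely with the operator factorization \eqref{snLphi}. I use the fact that $\Lambda_u$ commutes with scalar multiples of the identity, together with the identity $\Lambda_u^j s_0=s_j^{0,u}$ from Lemma~\ref{lrch}, applied with $0$ replaced by the constant sequence $v$. Write $T=\Lambda_u+vI$, so that $\scvu_m=T^m s_0$.

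\textbf{(1)$\Rightarrow$(2).} Assume $\psi_i=u$ for all $i$ and $\phi_i=v$ for $i\ge K+1$. For $n\ge K$, \eqref{snLphi} can be reordered as
\[
  \spp_n=\prod_{i=1}^K(\Lambda_u+\phi_iI)\,T^{\,n-K}s_0
  =\prod_{i=1}^K\bigl(T+(\phi_i-v)I\bigr)\,\scvu_{n-K}.
\]
Expanding the product with the elementary symmetric functions $\gamma_j=\Phi^K_j(\phi_1-v,\dots,\phi_K-v)$ gives $\sum_{j=0}^K\gamma_jT^{K-j}\scvu_{n-K}=\sum_{j=0}^K\gamma_j\scvu_{n-j}$. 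Then $\gamma_0=1$ and $P(x)=\prod_{i=1}^K(x+\phi_i-v)$, which clearly has only real zeros. The condition $\gamma_K=\prod_{i\le K}(\phi_i-v)\ne0$ requires each $\phi_i$ with $i\le K$ to differ from $v$. If some of them equal $v$, I would use the symmetry of $\spp_n$ in $\phi_1,\dots,\phi_n$ (Section~\ref{sec4}) to move those terms beyond the first $K'$ places. This reduces $K$ to the number $K'$ of indices with $\phi_i\ne v$, which is the intended reading of $K$.

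\textbf{(2)$\Rightarrow$(1).} Factor $P(x)=\prod_{i=1}^K(x+\alpha_i)$ with all $\alpha_i$ real, which is possible by hypothesis, and note $\alpha_i\ne0$ since $\gamma_K\ne0$. The same computation run backwards shows that $\sum_j\gamma_j\scvu_{n-j}=\prod_i(T+\alpha_iI)\scvu_{n-K}=s^{\tilde\phi,u}_n$, where $\tilde\phi_i=v+\alpha_i$ for $i\le K$ and $\tilde\phi_i=v$ for $i>K$. To pass to the given sequences, I compare recurrences for $n\ge K$. We have $s_{n+1}^{\phi,\psi}=\Omega(\phi_{n+1},\psi_{n+1})s_n^{\phi,\psi}$, while the right-hand side satisfies $\sum_j\gamma_j\scvu_{n+1-j}=\Omega(v,u)\sum_j\gamma_j\scvu_{n-j}$. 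Subtracting gives
\[
  \bigl((\phi_{n+1}-v)+(\psi_{n+1}-u)x\bigr)\,s_n^{\phi,\psi}=0.
\]
Since $s_n^{\phi,\psi}\ne0$, this forces $\phi_{n+1}=v$ and $\psi_{n+1}=u$ for all $n\ge K$.

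The main obstacle is the first $K$ terms of the sequences. From $s_K$ alone (only $K+1$ coefficients), the $2K$ numbers $\phi_i,\psi_i$ with $i\le K$ are not determined. So (1) has to be read as "the sequences can be taken with $\psi\equiv u$", namely $\phi=\tilde\phi$ and $\psi=u$. This identification is exactly the content supplied by the reversed computation, and it is unique up to the permutation symmetry in $\phi_1,\dots,\phi_K$. Indeed, the $\phi_i-v$ are then recovered as minus the zeros of $P$, which is where the real-rootedness of $P$ is essential.
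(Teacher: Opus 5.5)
Your proof is correct and follows the paper's route. For (1)$\Rightarrow$(2) the paper expands the commuting operator product via Lemma~\ref{lrch}, \eqref{cdir} and $\prod_{i=1}^n(x+\phi_i)=(x+v)^{n-K}P(x+v)$, which is your expansion of $\prod_{i=1}^K\bigl(T+(\phi_i-v)I\bigr)T^{n-K}s_0$. For (2)$\Rightarrow$(1) it just defines $\phi_i=v-\theta_i$ from the real zeros $\theta_i$ of $P$ with $\psi\equiv u$, which is the same existential reading you adopt. Your caveats are genuine imprecisions in the statement that the paper passes over: $\gamma_K=\prod_{i\le K}(\phi_i-v)$ is forced because the $\scvu_{n-j}$ have distinct degrees, and (2) does not determine $\phi_i,\psi_i$ for $i\le K$. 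Your recurrence comparison is a correct addition showing that the given sequences must have $\phi_i=v$ and $\psi_i=u$ for $i\ge K+1$.
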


\begin{proof}
First of all, we notice that if we have two constant sequences $\phi_i=v$, $\psi_i=u$, $1\le i$, and consider the polynomials
$\scvu_n(x)$, $n\ge0$, defined by~\eqref{sn+1}, then
using Lemma~\ref{lrch} for $\phi=v$, we get
\begin{equation}
\label{cdir}
  s_n^{v,u}(x) = \sum_{j=0}^n \binom{n}{j} v^js_{n-j}^{0,u}(x).
\end{equation}

(1) $\Rightarrow$ (2).
Consider the polynomial
\[
  P(x) = \prod_{i=1}^K(x+\phi_i-v) = \sum_{j=0}^K \gamma_jx^{K-j},
\]
which obviously has only real zeros.
Then for $n\ge K$,
\[
  \prod_{i=1}^n (x+\phi_i) = (x+v)^{n-K}P(x+v).
\]
Hence, using Lemma~\ref{lrch} and~\eqref{cdir} we have after easy computations
\[
  s_n^{\phi,\psi}(x) = \sum_{j=0}^K\gamma_j\scvu_{n-j}(x).
\]

(2) $\Rightarrow$ (1).
Let $\theta_i$ be the real zeros of $P$, $1\le i\le K$. Define then
\begin{equation*}
  \phi_i = \begin{cases} -\theta_i+v,&1\le i\le K,\\ v,&i\ge K+1.\end{cases}
\end{equation*}
The result again follows by using Lemma~\ref{lrch} and~\eqref{cdir}.
\end{proof}

\section{The case when \texorpdfstring{$\phi$}{phi} and \texorpdfstring{$\psi$}{psi} are constant sequences}
\label{sec5}

When the sequences $\phi$ and $\psi$ are constant, the polynomials $\spp_n$ enjoy a number of common properties, which include asymptotic properties for their zeros.

In this section is more convenient to perform the linear change of variable $x\mapsto 2x+1$, and
consider the polynomials $s_n^{v,u}(2x+1)$, $n\ge 0$. When $|v|+u<0$, as a consequence of Theorem~\ref{ptci}, we deduce that the polynomial $s_n^{v,u}(2x+1)$ has then $n$ real and simple zeros and they live in the interval $(-1,0)$. The reason to perform this change of variable is that the coefficients of $\scvu_n(2x+1)$ have the following appealing asymptotic behaviour (which will imply nice asymptotics for the zeros of $\scvu_n(2x+1)$).

\begin{Lem}
\label{l5.8}
Write
\begin{equation}
\label{coex}
  s_n^{v,u}(2x+1) = \sum_{j=0}^na_{n,j}^{v,u}x^j.
\end{equation}
If $u<0$ and $v+u<0$ then $(-1)^na_{n,j}>0$ and
\begin{equation}
\label{cotsv}
  1-j\left(\frac{-u-v+2j-2}{-u-v+2j}\right)^n
  \le \frac{(-1)^na_{n,j}^{v,u}}{\binom{-u+j-1}{j}(-u-v+2j)^n} \le 1
\end{equation}
for all $n\ge 0$ and $j=0,\dots, n$. Hence
\[
  \lim_{n\to \infty} \frac{a_{n,j}^{v,u}}{\binom{-u+j-1}{j}(u+v-2j)^n} = 1
\]
for all $j\ge 0$.
\end{Lem}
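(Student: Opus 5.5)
The plan is to find a recurrence for the coefficients $a^{v,u}_{n,j}$, solve it in closed form, and read off both bounds from a probabilistic (inclusion–exclusion) interpretation of that closed form.

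\textbf{Step 1 (recurrence).} Put $p_n(x)=\scvu_n(2x+1)$. Since $1-(2x+1)^2=-4x(x+1)$ and $\frac{d}{dx}\,s(2x+1)=2s'(2x+1)$, the recurrence~\eqref{sn+1} becomes
\[
p_{n+1}(x)=-2x(x+1)p_n'(x)+(u+v+2ux)p_n(x).
\]
Comparing coefficients gives
\[
a_{n+1,j}=(u+v-2j)\,a_{n,j}+2(u-j+1)\,a_{n,j-1}.
\]
Set $b_{n,j}=(-1)^na_{n,j}$ and $\lambda_j=-u-v+2j$. Then
\[
b_{n+1,j}=\lambda_j b_{n,j}+2(-u+j-1)b_{n,j-1},\qquad b_{0,j}=\delta_{j,0}.
\]
Both coefficients $\lambda_j$ and $2(-u+j-1)$ are positive because $u<0$ and $u+v<0$. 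An immediate induction on $n$ then gives $b_{n,j}>0$ for $0\le j\le n$, which is the sign claim.

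\textbf{Step 2 (closed form).} Write $C_j=\binom{-u+j-1}{j}$, so that $2(-u+j-1)C_{j-1}=2jC_j$. I claim
\[
b_{n,j}=C_j\sum_{k=0}^j(-1)^{j-k}\binom{j}{k}\lambda_k^n .
\]
\begin{itemize}
\item At $n=0$ the sum is $\sum_k(-1)^{j-k}\binom jk=\delta_{j,0}$, matching $b_{0,j}$.
\item For the recurrence step, $\lambda_j-\lambda_k=2(j-k)$, so the left side minus $\lambda_j b_{n,j}$ is $C_j\sum_k(-1)^{j-k}\binom jk\,2(k-j)\lambda_k^n$.
\item Using $(j-k)\binom jk=j\binom{j-1}{k}$, this equals $2jC_j\sum_k(-1)^{j-1-k}\binom{j-1}{k}\lambda_k^n=2(-u+j-1)b_{n,j-1}$, as required.
\end{itemize}
This verification is a routine binomial-identity check.

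\textbf{Step 3 (bounds).} Dividing by $C_j\lambda_j^n$ gives
\[
c_{n,j}:=\frac{(-1)^na_{n,j}}{C_j\lambda_j^n}=\sum_{k=0}^j(-1)^{j-k}\binom jk\Big(1-\frac{2(j-k)}{\lambda_j}\Big)^n .
\]
Because $-u-v>0$ we have $2j<\lambda_j$, so the following urn model is well defined. Take $j$ marked cells, each of probability $2/\lambda_j$, plus one remaining cell of probability $1-2j/\lambda_j>0$, and perform $n$ independent draws. By inclusion–exclusion, $c_{n,j}$ is exactly the probability that every marked cell is hit at least once. Hence $c_{n,j}\le 1$. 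By the union bound,
\[
c_{n,j}\ge 1-j\Big(1-\frac{2}{\lambda_j}\Big)^n=1-j\Big(\frac{\lambda_{j-1}}{\lambda_j}\Big)^n ,
\]
which is~\eqref{cotsv}.

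\textbf{Step 4 (limit).} Since $0<\lambda_{j-1}/\lambda_j<1$, the lower bound tends to $1$ as $n\to\infty$, so $c_{n,j}\to1$. Finally $(-1)^n\lambda_j^n=(u+v-2j)^n$, which converts the statement about $c_{n,j}$ into the stated limit for $a^{v,u}_{n,j}$.

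The main obstacle is Step~3. A direct induction on $c_{n,j}$ through the recurrence only yields $c_{n,j}\le j(1-r^n)$ with $r=\lambda_{j-1}/\lambda_j$, which is useless for $j\ge2$. That is why I go through the closed form and its probabilistic reading, where the one real check is $2j<\lambda_j$, guaranteed by $u+v<0$.
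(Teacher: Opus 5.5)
Your proof is correct. It has the same skeleton as the paper's proof: the coefficient recurrence gives the sign, and an explicit formula gives the two-sided bound. The difference is that you derive both ingredients yourself, whereas the paper quotes them. It takes from Spivey the identity $a_{n,j}^{v,u}=(-1)^n\binom{-u+j-1}{j}\sum_{l=0}^n\binom{n}{l}2^l(-u-v)^{n-l}\,j!\,S(l,j)$, and from Mez\H{o} the bounds $j^l\bigl(1-j(\tfrac{j-1}{j})^l\bigr)\le j!\,S(l,j)\le j^l$, and then sums with the binomial theorem.

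The two routes are closely related:
\begin{itemize}
\item Your closed form is that identity with the sum over $l$ already carried out, using $j!\,S(l,j)=\sum_k(-1)^{j-k}\binom{j}{k}k^l$.
\item Your urn argument is the paper's estimate averaged over the binomially distributed number $l$ of draws that land in marked cells. Conditionally on $l$, hitting every marked cell means those $l$ draws form a surjection onto the $j$ marked cells. The quoted lower bound on $j!\,S(l,j)$ is exactly the union bound for surjections.
\item In both versions the hypothesis $u+v<0$ enters at the same place: nonnegativity of the weights $(-u-v)^{n-l}$ in the paper, and of the probability of the unmarked cell in yours.
\end{itemize}

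So the underlying inequality is the same. What your route buys is self-containedness: you check the closed form directly against the recurrence, and the estimate comes from a transparent inclusion--exclusion and union-bound argument. The paper's version is shorter only because it outsources both ingredients.
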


\begin{proof}
It is easy to see that the polynomials $f_n^{v,u}(x) = s_n^{v,u}(2x+1)$ satisfy the recurrence
\[
  f_{n+1}^{v,u}(x) = -2x(1+x)(f_n^{v,u})'(x)+(u+v+2ux)f_n^{v,u},
  \quad f_0^{v,u}=1.
\]
Hence~\eqref{gkp1} gives
\[
  a_{n+1,j}^{v,u} = (u+v-2j)a_{n,j}^{v,u}+2(u-j+1)a_{n,j-1}^{v,u},
  \quad 1\le j\le n+1.
\]
This easily shows that $(-1)^na_{n,j}\ge 0$.

Using \cite[Identity (2)]{Spi} (see also \cite{Neu}), we have for the $a_{n,j}^u$ the explicit expression
\begin{equation*}
  a_{n,j}^{v,u} = (-1)^n\binom{-u+j-1}{j} \sum_{l=0}^n 2^l(-u-v)^{n-l}\binom{n}{l}j!\,S(l,j),
\end{equation*}
where $S(l,j)$ denote the Stirling numbers of the second kind. Taking into account that for $l,j\ge 1$ (see \cite[p.~303]{Mez})
\[
  j^l\left(1-j\left(\frac{j-1}{j}\right)^l\right) \le j!\,S(l,j)\le j^l,
\]
from where the inequalities~\eqref{cotsv} follow easily.
\end{proof}

The main result of this section is Theorem~\ref{thchi}, which provides an asymptotic for the leftmost and rightmost zeros of $s_n^{v,u}(2x+1)$.
The theorem is a direct consequence of the following general result, which we will prove in the Appendix.

\begin{Theo}
\label{sip}
Let $F : \NN \cup \{0\} \to \RR \setminus \{0\}$ be a function such that
\begin{equation}
\label{ht41}
  |F(j-1) F(j+1)| \le F(j)^2
\end{equation}
for each $j \in \NN$. Let $G : \NN \cup \{0\} \to (0,+\infty)$ be a (positive) function such that
\[
  G(j-1) G(j+1) < G(j)^2
\]
for each $j \in \NN$.  Let $\tau_j(n) \in \RR$, for each $j, n \in \NN \cup \{0\}$, with these properties:
\begin{itemize}
\item there exists some constant $C > 0$ such that for each $j \le n \in \NN \cup \{0\}$,
\begin{equation}
\label{tauacotado}
  |\tau_j(n)| \le C;
\end{equation}
\item for each $j$,
\[
  \lim_{n \to +\infty} \tau_j(n) = 1.
\]
\end{itemize}
Let $K \in \RR$ (not necessarily positive) and, for each $n \in \NN$, let
\[
  P_n(x) = \sum_{j=0}^n F(j) G(j)^{n-K} \tau_j(n) x^j.
\]
Let $\zeta_{n,m}$, with $1 \le m \le n$, be the $m$-th real zero of $P_n$, ordered in increasing size of their absolute value. Then, for each $m \in \NN$,
\[
  \lim_{n\to+\infty}
  \frac{\zeta_{n,m}}{
  -\frac{F(m-1)}{F(m)} \left(\frac{G(m-1)}{G(m)}\right)^{n-K}
  }
  = 1.
\]
\end{Theo}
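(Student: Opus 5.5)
The approach is a Newton polygon / dominant-balance argument. First normalize: put $w=$ the expected scale of the $m$-th zero,
\[
  r_m(n) = \frac{F(m-1)}{F(m)}\Big(\frac{G(m-1)}{G(m)}\Big)^{n-K},
\]
and set $x = -r_m(n)\,t$. Writing $c_j(n)=F(j)G(j)^{n-K}$, the log-concavity of $G$ (strict) gives that $\log c_j(n)$ is, for large $n$, a strictly concave function of $j$ up to the bounded perturbation from $F$. Condition \eqref{ht41} gives $|F(j)/F(j-1)|$ nonincreasing, and strict log-concavity of $G$ gives the ratios $\rho_j=G(j-1)/G(j)$ strictly increasing. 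Consequently $r_1(n)\ll r_2(n)\ll\cdots$, with $r_{m}(n)/r_{m+1}(n)\to 0$ exponentially fast in $n$.

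Next I would show that, on the circle $|x|=r_m(n)\cdot\lambda$ with $\lambda$ near $1$, the two terms $j=m-1$ and $j=m$ dominate all others. Dividing $P_n(x)$ by $c_m(n)\,r_m(n)^m$, the $j$-th term becomes
\[
  \pm\frac{F(j)}{F(m)}\Big(\frac{G(j)}{G(m)}\Big)^{n-K}\Big(\frac{F(m-1)}{F(m)}\Big)^{j-m}\Big(\frac{G(m-1)}{G(m)}\Big)^{(n-K)(j-m)}\tau_j(n)\,t^j .
\]
For $j=m,m-1$ this is $\tau_m t^m$, resp.\ $-\tau_{m-1}t^{m-1}$ (the sign coming from $x=-r_m t$). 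For other $j$, the $G$-part is $\big(\prod\text{ratios}\big)^{n-K}$. By strict log-concavity it equals $\theta_j^{\,n-K}$ with $\theta_j<1$, and in fact $\theta_j\le \theta^{|j-m|-\cdots}$ geometrically decaying in the distance from $\{m-1,m\}$. The $F$-part is bounded by $1$ times a harmless factor using \eqref{ht41} (the $F$ ratios are monotone, so products telescope favorably). Together with $|\tau_j|\le C$ from \eqref{tauacotado}, the sum over $j\notin\{m-1,m\}$ is at most $C\sum_{k\ge1}\theta^{k(n-K)}\max(\lambda,\lambda^{-1})^{k}$-type geometric series. This tends to $0$ uniformly on $|t-1|\le\varepsilon$, and also on $|t|=1\pm\varepsilon$.

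Then I would apply Rouché's theorem. On the annulus boundaries $|t|=1-\varepsilon$ and $|t|=1+\varepsilon$, the normalized polynomial is a small perturbation of $t^{m-1}(\tau_m t-\tau_{m-1})$, whose zeros are $0$ (multiplicity $m-1$) and $\tau_{m-1}/\tau_m\to1$. Hence for large $n$ the polynomial has exactly $m-1$ zeros with $|x|<(1-\varepsilon)r_m$ and exactly $m$ with $|x|<(1+\varepsilon)r_m$. So exactly one zero lies in the annulus, and a small disc around $t=1$ contains it by a second Rouché on $|t-1|=\varepsilon$. Since the coefficients are real, that unique zero is real; it is therefore the $m$-th zero in order of absolute value. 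Hence $\zeta_{n,m}/(-r_m(n))\in(1-\varepsilon,1+\varepsilon)$ eventually, which is the claim. (Realness of all zeros is not needed beyond this: the $m$-th zero by modulus is real for large $n$.)

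The main obstacle is the uniform tail estimate. The sum runs over $j$ up to $n$, growing with $n$, and the $F$ factors might grow. I would handle this by writing each ratio as a product of consecutive ratios, $F(j)/F(m)\cdot(F(m-1)/F(m))^{j-m}$. Monotonicity of $|F(i)/F(i-1)|$ makes this product at most $1$ in absolute value for $j>m$, and similarly for $j<m-1$. For the $G$ part, I would use $G(i-1)/G(i)\ge \rho_{m+1}>\rho_m$ for $i\ge m+1$, giving a factor at most $(\rho_m/\rho_{m+1})^{(n-K)(j-m)}$. For the left side I would use $\rho_{m-1}<\rho_m$ analogously. Care is needed when $n-K<0$ or small, but only large $n$ matters.
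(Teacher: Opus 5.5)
Your proof is correct, and it follows a genuinely different route from the paper's. The paper works entirely on the real line:
\begin{itemize}
\item it splits $P_n=Q_{n,N}+R_{n,N}$, with $N$ chosen via Lemma~\ref{lemaG}, part~\eqref{limite0};
\item it proves in Lemma~\ref{lemaQn>d} that the fixed-degree polynomial $Q_{n,N}$ changes sign across each $J_{k,n}$ and satisfies $|Q_{n,N}|\ge\delta G(0)^{n-K}$ off these intervals;
\item it bounds the tail $R_{n,N}$ by $\frac{\delta}{2}G(0)^{n-K}$ on the relevant range, which gives at least one zero in each $J_{k,n}$ and none elsewhere in that range;
\item it gets uniqueness and simplicity by applying the same result to $P_n'$ (with $\widetilde F,\widetilde G$) and checking via Lemma~\ref{lemaImn} that the resulting intervals miss the $J_{k,n}$.
\end{itemize}

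Your Rouch\'e comparison with $t^{m-1}(\tau_m t-\tau_{m-1})$ on $|t|=1\pm\varepsilon$ and on $|t-1|=\varepsilon$ delivers existence, uniqueness, simplicity and reality in one step. It needs no truncation, no lower bound on $|Q_{n,N}|$, and no derivative trick. Your tail estimate $\theta_j\le(\rho_m/\rho_{m+1})^{j-m}$ for $j>m$ is also more elementary than Lemma~\ref{lemaG}, part~\eqref{limite0}, which it in fact implies in one line. As a bonus, your argument shows that the $m$ zeros of smallest modulus among \emph{all} complex zeros are real and simple, which the paper's real-variable argument does not see.

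One step should be made explicit: the identification with $\zeta_{n,m}$, which is the $m$-th \emph{real} zero. This requires the $m-1$ zeros in $|x|<(1-\varepsilon)|r_m(n)|$ to be real, and the argument as written for a single $m$ does not show that. Run it for all $k=1,\dots,m$ simultaneously, with the same $\varepsilon$. Since $|r_k(n)/r_{k+1}(n)|\to0$, for large $n$ the annuli are disjoint and ordered, and each contains exactly one zero, which is real. Moreover, there are exactly $k$ zeros in $|x|<(1+\varepsilon)|r_k(n)|$ and exactly $k$ in $|x|<(1-\varepsilon)|r_{k+1}(n)|$, so no zeros lie between consecutive annuli.
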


\begin{proof}[Proof of Theorem~\ref{thchi}]
In order to prove~\eqref{rmzi}, consider
\[
  F(j) = \binom{-u+j-1}{j},\quad G(j)=u+v-2j.
\]
It is just a matter of computation to check that $F$ and $G$ satisfy the hypothesis of Theorem~\ref{sip}. In particular, $-F(j-1)/F(j)=j/(u-j+1)$.

Write $\tau_n(j)=a_{n,j}^{v,u}/F(j)G^n(j)$, so that
\[
  s_n^{v,u}(x)=\sum_{j=0}^nF(j)G^n(j)\tau_n(j)x^j.
\]
Lemma~\ref{l5.8} shows that $\lim_n\tau_j(n)=1$, and $0<\tau_j(n)\le 1$.

Hence,~\eqref{rmzi} follows from Theorem~\ref{sip}.

In order to prove~\eqref{lmzi}, it is enough to consider the following symmetry
\begin{equation}
\label{cosxb}
  f_n^{v,u}(x) = (-1)^nf_n^{-v,u}(-1-x),
\end{equation}
where $f_n^{v,u}(x) = s_n^{v,u}(2x+1)$.
\end{proof}

For the sake of completeness, we display the generating function for the sequence $(s_n^{u,v})_n$,
\[
  \sum_{n=0}^\infty \frac{\scvu_n(x)}{n!} z^n 
  = (\cosh(z)+x\sinh(z))^u e^{vz} =: G_{v,u}(x,z),
\]
which is a straightforward consequence of the following partial differential equation for~$G_{v,u}$:
\[
  \frac{\partial}{\partial z} G_{v,u}(x,z)
  = (1-x^2)\frac{\partial}{\partial x} G_{v,u}(x,z) + (ux+v) G_{v,u}(x,z).
\]

\section{Zeros of linear combinations of the polynomials \texorpdfstring{$\scvu_n$}{snvu}}
\label{sec6}

Let $v$ and $u$ be real numbers satisfying $|v|+u<0$. If we fix a nonnegative integer $K$ and real numbers $\gamma_j$, $j=0,\dots, K$, with $\gamma_0=1$, $\gamma_K \ne 0$, in this section we consider the problem of determining the number of the real zeros of the polynomials
\begin{equation}
\label{polqn}
  q_n^{v,u}(x) = \sum_{j=0}^K \gamma_j \scvu_{n-j}(2x+1),\quad n\ge K.
\end{equation}
We will also prove some asymptotic for them.

As shown in Lemma~\ref{eqp} (except for the linear change of variables $x\to 2x+1$) these polynomials correspond with polynomials $(\spp_n)_n$ for the case when the sequence $\psi$ is constant and $\phi=(\phi_j)$ is constant for $j$ big enough, and the polynomial $\sum_{j=0}^K \gamma_j x^{K-j}$ has only real zeros.

Consider the first order differential operators defined by
\begin{align}
\notag
  \Lambda_{v,u} p(x)& = (1-x^2)p'(x) + (v+ux)p(x), \\
\label{tguv}
  \tilde \Lambda_{v,u} p(x)& = -2x(1+x)p'(x) + (u+v+2ux)p(x).
\end{align}
Writing $\tilde p(x)=p(2x+1)$, a simple computation shows that
\begin{equation}
\label{cuy1}
  \tilde \Lambda_{v,u}\tilde p=\widetilde{\Lambda_{v,u} p}.
\end{equation}

\begin{Lem}
\label{hqj2}
Let $v$ and $u$ be real numbers satisfying $|v|+u<0$.
\begin{enumerate}
\item\label{hqj2-1}
The operator $\tilde\Lambda_{v,u}$ is an increasing real zero operator (in the sense that the number of real zeros of the polynomial $\tilde \Lambda_{v,u}p$ is larger than the number of real zeros of~$p$).
\item\label{hqj2-2}
For $n\ge K$,
\begin{equation}
\label{cuy}
  q_n^{v,u} = \tilde \Lambda_{v,u}q_{n-1}^{v,u} = \tilde \Lambda_{v,u}^{n-K}q_K^{v,u}.
\end{equation}
\item\label{hqj2-3}
For $n\ge K$ the polynomial $q_n^{v,u}$ has at least $n-K$ real zeros.
\item\label{hqj2-4}
If there exists $n_0\ge K$ such that $q_{n_0}^{v,u}$ has only real and simple zeros, then $q_n^{v,u}$ has only real and simple zeros for $n\ge n_0$.
\end{enumerate}
\end{Lem}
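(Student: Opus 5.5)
The plan is to reduce everything to the lemmas of Section~\ref{sec3} through the change of variables \eqref{cuy1}, and then to handle each of the four items in order.

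\emph{Item (1).} Write $\tilde p(x)=p(2x+1)$. The map $p\mapsto\tilde p$ sends real zeros to real zeros bijectively, preserving multiplicities, and it sends $\pm1$ to $0,-1$. By \eqref{cuy1} it therefore suffices to show that $\Lambda_{v,u}$ increases the number of real zeros. This is an operator of the form $q=(1-x^2)p'+(a+bx)p$ with $a=v$, $b=u$ and $b+|a|<0$, so Lemma~\ref{depaq2} applies. Let $Z(p)\cup\{\pm1\}$ have elements $c_0<\dots<c_r$. Lemma~\ref{depaq2} gives at least one zero of $q$ in each of the $r$ gaps $(c_{i-1},c_i)$. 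On top of that:
\begin{itemize}
\item a zero $\xi\ne\pm1$ of $p$ of multiplicity $k$ remains a zero of $q$ of multiplicity at least $k-1$;
\item the points $\pm1$ keep their full multiplicity, by the computation $q=(x-c)^n(d-x)^m s$ in Steps 2--4.
\end{itemize}
Suppose $p$ has $j$ distinct real zeros off $\pm1$. Then the count is at least
\[
\#_{\RR}(p)-j+(\text{number of gaps}) \ge \#_{\RR}(p)-j+(j+1)=\#_{\RR}(p)+1,
\]
because the number of gaps equals $j+2-1=j+1$ once $\pm1$ are included (when $\pm1\in Z(p)$ they are counted only once). The real zeros of $q$ thus strictly exceed those of $p$, and the same then holds for $\tilde\Lambda_{v,u}$.

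\emph{Item (2).} Write $f_m=\scvu_m(2x+1)$. By \eqref{sn+1} with constant sequences we have $\scvu_m=\Lambda_{v,u}\scvu_{m-1}$, and \eqref{cuy1} gives $f_m=\tilde\Lambda_{v,u}f_{m-1}$ for every $m\ge1$. For $n>K$ every index satisfies $n-j\ge1$, so linearity yields
\[
q_n^{v,u}=\sum_j\gamma_j\tilde\Lambda_{v,u}f_{n-1-j}=\tilde\Lambda_{v,u}q_{n-1}^{v,u}.
\]
Iterating this gives \eqref{cuy}.

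\emph{Items (3) and (4).} Item (3) follows by induction from (1) and (2): each application of $\tilde\Lambda_{v,u}$ adds at least one real zero, starting from $q_K^{v,u}$, which has at least $0$ real zeros.

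For item (4), we need $\deg q_n^{v,u}=n$. Since $\gamma_0=1$, the leading term is that of $f_n$, whose leading coefficient is nonzero because $u<0$ gives $u-j+1\ne0$. Now suppose $q_{n_0}^{v,u}$ has only real simple zeros. Transfer back through $x\mapsto 2x+1$ and apply the third lemma of Section~\ref{sec3} (the one following Lemma~\ref{depaq2}), with $p$ having only real zeros. All zeros of $q$ are then real. Zeros of $p$ off $\pm1$ disappear, since $n-1=0$. Exactly one simple zero appears in each gap. The points $\pm1$ keep their multiplicity, which is at most $1$ by simplicity of the zeros of $p$. So $q$ has only real simple zeros, and induction via \eqref{cuy} concludes.

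\emph{Main obstacle.} The delicate step is the counting in item (1). One must track multiplicities at the points $\pm1$ and check that the gap count really exceeds the multiplicity loss. This includes the case where $p$ has no real zeros at all: the gap $(-1,1)$ still produces one zero, by Step~4 of Lemma~\ref{depaq2}.
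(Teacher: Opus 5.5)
Your proposal is correct and follows the paper's route: you pass to $\Lambda_{v,u}$ via \eqref{cuy1}, use Lemma~\ref{depaq2} (and the lemma right after it) for items (1) and (4), get (2) from $\Lambda_{v,u}\scvu_m=\scvu_{m+1}$, and get (3) by iterating (1). What you add is the explicit zero count the paper leaves implicit (multiplicities at $\pm1$, one zero in each gap of $Z(p)\cup\{\pm1\}$), and you rightly restrict the one-step identity in (2) to $n>K$, where $q_{n-1}^{v,u}$ is defined.
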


\begin{proof}
Part~\eqref{hqj2-1} is consequence of~\eqref{cuy1} and Lemma~\ref{depaq2}.
Part~\eqref{hqj2-2} it follows easily using~\eqref{cuy1} and $\Lambda_{v,u}\scvu_n(x) = \scvu_{n+1}(x)$.
Part~\eqref{hqj2-3} is an easy consequence of part~\eqref{hqj2-1}. Part~\eqref{hqj2-4} follows easily using Lemma~\ref{depaq2}.
\end{proof}

As shown in Theorem~\ref{th6.3i}, we will prove much more on the zeros of $q_n^{v,u}$. To do that, we associate with the real numbers $\gamma_j$, $0\le j\le K$, the polynomial
\begin{equation}
\label{polp}
  P(x) = \sum_{j=0}^K\gamma_jx^{K-j}.
\end{equation}
Assume that $P$ has only real zeros and let $\theta_i$, $ 1\le i\le K$, be the zeros of the polynomial $P$. Define then (as in Lemma~\ref{eqp})
\begin{equation}
\label{phes}
  \phi_i = \begin{cases} -\theta_i,&1\le i\le K,\\ 0,&i\ge K+1.\end{cases}
\end{equation}
We will sometimes write $q_n^{\phi,v,u}=q_n^{v,u}$, in order to stress the dependence of $q_n^{v,u}$ on the sequence $\phi$ defined in~\eqref{phes}.

\begin{Rem}
\label{rv0}
As shown in Lemma~\ref{eqp},
\begin{equation}
\label{phex}
  q_n^{\phi,0,u}(x) = \spu_n(2x+1)
\end{equation}
and
\begin{equation}
\label{phexz}
q_n^{\phi, v,u}(x) = q_n^{\phi+v,0,u}(x) = s_n^{\phi+v,u}(2x+1).
  \end{equation}
And hence $q_n^{\phi, v,u}$ is the case $v=0$ associated to the polynomial $P(x+v)$.
As we see later on, taking this into account in order to prove Theorem~\ref{th6.3i}, it will be enough to consider the case $v=0$.
\end{Rem}

The polynomials $q_n^{\phi,v,u}$ enjoy the symmetry
\begin{equation}
\label{cosxc}
  q_n^{\phi,v,u}(x)=(-1)^{n}q_n^{-\phi,-v,u}(-1-x).
\end{equation}

Indeed, using~\eqref{cosxb}, we have
\begin{align*}
  q_n^{\phi,v,u}(x)&=\sum_{j=0}^K\gamma_j\scvu_{n-j}(2x+1) \\
  &= \sum_{j=0}^K (-1)^{n-j} \gamma_j s^{-v,u}_{n-j}(-2x-1)
  = (-1)^{n} q_n^{-\phi,-v,u}(-1-x).
\end{align*}

Part~\eqref{th6.3i-1} of Theorem~\ref{th6.3i} is now an easy consequence of~\eqref{phexz} and Theorem~\ref{ptci} (taking into account the change of variable $x\mapsto 2x+1$).

When $P$ has zeros outside of $(v+u,v-u)$, part~\eqref{th6.3i-1} of Theorem~\ref{th6.3i} does not apply and the polynomials $q_n^{v,u}$ might have non real zeros. Part~\eqref{th6.3i-2} of Theorem~\ref{th6.3i}, however, states that
for $n$ big enough (depending on the polynomial~$P$) all the zeros of $q_n^{v,u}$ are still real and completely describes where they live. For the rest of this section we address this.

We need the assumption 
\begin{equation}
\label{conc}
  P(u+v- 2l),P(-u-v+2l) \ne 0,\quad l\in \NN.
\end{equation}
Write $N^+$, $N^-$ for the cardinal of the sets
\begin{align}
\label{h+}
  \Hh ^+ &= \{l: \text{$l\ge 0$, $P(u+v-2l)P(u+v-2l-2)<0$}\}, \\
  \Hh^- &= \{l: \text{$l\ge 0$, $P(-u-v+2l)P(-u-v+2l+2)<0$}\},
\notag
\end{align}
respectively. Define finally
\begin{align*}
  \Gg^+ &= \{0,1,2,3,\dots \}\setminus \Hh^+, \\
  \Gg^- &= \{0,1,2,3,\dots \}\setminus \Hh^-.
\end{align*}

We start by proving the asymptotic behaviour of the zeros of $q_n^{0,u}$.
Definition~\eqref{phes} gives that the set $\{i: |\phi_i|>-u\}$ has at most $K$ elements. Hence~\eqref{phex} and Corollary~\ref{ptc3} then say that the number $n_-$ of negative zeros of $q_n^{0,u}$ in the interval $(-1,0)$ is at least $n-K$ (actually we will proof that for $n$ big enough $n_- = n-N^- - N^+$ and the zeros are simple). Write $\zeta^{\C}_{n,j}$, $1\le j\le n_-$, for the $n_-$ zeros of $q_n^{0,u}$ in $(-1,0)$ arranged in decreasing order (and taking into account their multiplicity).

\begin{Theo}
\label{onl3}
Let $\gamma_i$, $1\le i\le K$, be real numbers with $\gamma_0=1$, $\gamma_K \ne 0$. Assume that the polynomial $P$ \eqref{polp} satisfies~\eqref{conc}. If $u<0$, then, there exists $n_0$ such that for $n\ge n_0$ the polynomial $q_n^{0,u}$ \eqref{polqn} has exactly $N^+$ positive zeros and they are simple. Write then $\zeta^{\R}_{n,j}$, $1\le j\le N^+$, for the positive zeros of the polynomial $q_n^{0,u}$
arranged in increasing order.
Then for any given positive integer $j$, we have
\begin{align*}
  \lim_{n\to \infty}\frac{\zeta^{\R}_{n,j}}{\frac{h_j+1}{u-h_j}\frac{P(u-2h_j)}{P(u-2h_j-2)}
  \left(\frac{u-2h_j}{u-2h_j-2}\right)^{n-K}} &= 1, \quad 1\le j\le N^+,
  \\
  \lim_{n\to \infty} \frac{\zeta^{\C}_{n,j}}{\frac{g_j+1}{u-g_j} \frac{P(u-2g_j)}{P(u-2g_j-2)}
  \left(\frac{u-2g_j}{u-2g_j-2}\right)^{n-K}} &= 1, \quad 1\le j,
\end{align*}
where $\Hh^+=\{h_1,\dots, h_{N^+}\}$ and $\Gg^+ = \{g_1,g_2,\dots\}$ (arranged in increasing order).
Moreover, for any positive integer $j$, $n_0$ can also be taken so that for $n\ge n_0$ the $j$ rightmost negative zeros of the polynomial $q_n^{0,u}$ are simple.
\end{Theo}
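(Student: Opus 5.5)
The idea is to reduce Theorem~\ref{onl3} to (a local version of) Theorem~\ref{sip}. First I compute the coefficients of $q_n^{0,u}$. Writing $q_n^{0,u}(x)=\sum_i b_{n,i}x^i$, the definition \eqref{polqn} gives $b_{n,i}=\sum_{j=0}^K\gamma_j a^{0,u}_{n-j,i}$. Lemma~\ref{l5.8} then yields, for each fixed $i$,
\[
  b_{n,i}=\binom{-u+i-1}{i}(u-2i)^{n-K}\Big(\sum_{j=0}^K\gamma_j(u-2i)^{K-j}+o(1)\Big)
  =\tilde F(i)\,G(i)^{n-K}\,\tau_i(n),
\]
with $\tilde F(i)=\binom{-u+i-1}{i}P(u-2i)$, $G(i)=u-2i$ and $\tau_i(n)\to1$. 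Hypothesis \eqref{conc} guarantees $\tilde F(i)\neq 0$. The explicit error term in \eqref{cotsv} gives $|\tau_i(n)|\le C$ for $i\le n$, since $|P(u-2i)|$ is comparable to $|u-2i|^K$ once $i$ is large. Up to the harmless sign $(-1)^{n-K}$, which does not move zeros, $|G|$ is positive and strictly log-concave, because $(u-2i+2)(u-2i-2)<(u-2i)^2$.

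Formally, Theorem~\ref{sip} predicts that the $m$-th zero in increasing absolute value behaves like
\[
  -\frac{\tilde F(m-1)}{\tilde F(m)}\Big(\frac{G(m-1)}{G(m)}\Big)^{n-K}
  =\frac{m}{u-m+1}\,\frac{P(u-2m+2)}{P(u-2m)}\Big(\frac{u-2m+2}{u-2m}\Big)^{n-K}.
\]
Setting $l=m-1$, this is exactly the expression in the statement. Its sign is positive precisely when $P(u-2l)P(u-2l-2)<0$, that is, when $l\in\Hh^+$. The indices in $\Hh^+$ therefore produce the positive zeros $\zeta^{\R}_{n,j}$, and those in $\Gg^+$ produce the negative zeros $\zeta^{\C}_{n,j}$.

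The difficulty is that $\tilde F$ need not satisfy \eqref{ht41}. I would therefore rerun the Appendix argument locally. Fix $M$ and consider $m\le M$. Evaluate $q_n$ at $x=t\rho_{m}$, where $\rho_m$ is the predicted location, $t$ ranges over a small neighbourhood of $1$, and the multiplier is taken with the sign of the prediction. For large $n$, the terms with indices $m-1$ and $m$ are then of the same size. All other terms are exponentially smaller, because $G$ is strictly log-concave and the ratios $G(i-1)/G(i)$ increase to $1$. Since only finitely many indices near $m$ compete, the failure of \eqref{ht41} costs only a constant, which the geometric gain absorbs. For the tail $i>L$ I use that $P(u-2i)$ has constant sign and grows polynomially, so $\tilde F$ is log-concave up to bounded factors there. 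With this, a sign change of $q_n$ between $t=1-\eps$ and $t=1+\eps$ gives a real zero near each $\rho_m$, $m\le M$. Rouché's theorem on the circle $|x|=|\rho_m|(1\pm\eps)$ shows there is exactly one zero in the corresponding annulus, so these zeros are simple. This handles the $N^+$ positive zeros and any prescribed number $j$ of rightmost negative zeros, including their simplicity.

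The main obstacle is showing that, for large $n$, there are \emph{no other} positive zeros beyond the $N^+$ located ones. Descartes' rule of signs would suffice if the coefficients $b_{n,i}$ with $i>L$ had the sign of $(-1)^{n}P(u-2i)$ uniformly in $i\le n$. I would try to obtain this from the lower bound in \eqref{cotsv} combined with domination of the $j=0$ term in $\sum_j\gamma_j a^{0,u}_{n-j,i}$. The problem is that the relative error $i\big((u-2i+2)/(u-2i)\big)^{n}$ is not uniformly small when $i$ is comparable to $n$. If that route fails, I would instead use \eqref{cuy} together with Lemma~\ref{depaq2}. Since $q_n=\tilde\Lambda_{0,u}^{\,n-n_1}q_{n_1}$, the number of zeros of $q_n$ in $(0,\infty)$ cannot increase beyond those created between consecutive zeros and the point $0$. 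Combining this with the count of at least $n-K$ zeros in $(-1,0)$ from Corollary~\ref{ptc3}, the $N^+$ positive zeros found above, and the symmetric count $N^-$ obtained from \eqref{cosxc}, the degree then leaves no room for extra positive zeros.
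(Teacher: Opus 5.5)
Your analysis near the origin is sound and close to the paper's route. The paper uses the same $F(j)=\binom{-u+j-1}{j}P(u-2j)$ and $G(j)=u-2j$, and bounds $\tau_j(n)$ via the upper bound in \eqref{cotsv} and \eqref{conc}. It then checks \eqref{ht41} only for large $j$, factor by factor of $P$, and cites Remark~\ref{RemFjgrande}.

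Your Rouch\'e argument on the circles $|x|=(1\pm\eps)|\rho_m|$ is a clean substitute for the derivative argument in the proof of Theorem~\ref{sip}: it gives existence, uniqueness, reality and simplicity at once. Factoring out $(-1)^{n-K}$ also fixes the sign of $G$, which the paper glosses over.

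Your tail justification should be restated, however. $\tilde F$ is not log-concave up to bounded factors; for $-1<u<0$ even $\binom{-u+i-1}{i}$ is log-convex. What makes the estimate work is that $|\tilde F(i)|$ grows at most polynomially, so $|\tilde F(i)|\,|\tilde F(m-1)/\tilde F(m)|^{i}$ grows at most exponentially. Meanwhile $|G(i)|\,|G(m-1)/G(m)|^{i}$ decays geometrically in $i$ and enters raised to the power $n-K$, which wins for large $n$.

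The genuine gap is the one you flag: the claim that there are \emph{exactly} $N^+$ positive zeros. Any localization of this kind only controls the disc $|x|<(1+\eps)|\rho_M|$, whose radius tends to $0$ as $n\to\infty$. It gives at least $N^+$ positive zeros and says nothing on $[(1+\eps)|\rho_M|,\infty)$. The paper's proof, which only cites Remark~\ref{RemFjgrande}, does not supply this step explicitly either.

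Your second route cannot close it, for three reasons. Lemma~\ref{depaq2} gives only lower bounds in each gap, so non-real zeros of $q_{n_1}^{0,u}$ may later become new positive zeros. Corollary~\ref{ptc3} needs $P$ real-rooted, which is not assumed here. Even then, the count $(n-K)+N^++N^-$ leaves $K-N^+-N^-$ zeros unaccounted for; the sharper count in $(-1,0)$ is precisely what Theorem~\ref{th6.3i} later derives from this theorem.

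Your first route does work, but the uniform sign must come from the recurrence, not from the lower bound in \eqref{cotsv}. Since $u<0$, both summands of $a^{0,u}_{n,i}=(u-2i)a^{0,u}_{n-1,i}+2(u-i+1)a^{0,u}_{n-1,i-1}$ have sign $(-1)^n$ or vanish. Hence $|a^{0,u}_{n-l,i}|\le|u-2i|^{-l}\,|a^{0,u}_{n,i}|$ for $0\le l\le n$ and all $i$. Take $L$ such that, for $i\ge L$, $\sum_{l=1}^K|\gamma_l|\,|u-2i|^{-l}<1$ and $u-2i$ lies to the left of every real zero of $P$. Then $\sgn b_{n,i}=\sgn a^{0,u}_{n,i}=(-1)^n=(-1)^{n-K}\sgn P(u-2i)$ for all $L\le i\le n$, uniformly in $n$. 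Lemma~\ref{l5.8} gives the same relation for the finitely many $i<L$ once $n$ is large.

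So the coefficient sequence of $q_n^{0,u}$ has exactly $N^+$ sign changes, and Descartes' rule allows at most $N^+$ positive zeros counted with multiplicity. Together with your $N^+$ localized zeros, this yields exactly $N^+$ positive zeros, all simple.
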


Let us remark that in this theorem we do not need to assume that the polynomial $P$ has only real zeros.

\begin{proof}
Write $q_n^{0,u}(x) = \sum_{j=0}^nd_{n,j}x^j$. From~\eqref{polqn} and~\eqref{coex}, it follows
\[
  d_{n,j} = \sum_{l=0}^K\gamma_la_{n-l,j}.
\]
On the one hand, Lemma~\ref{l5.8} gives, for each~$j$,
\begin{equation}
\label{cotxa}
  d_{n,j} \sim \sum_{l=0}^K \gamma_l \binom{-u+j-1}{j} (u-2j)^{n-l}
  = \binom{-u+j-1}{j} (u-2j)^{n-K}P(u-2j).
\end{equation}

On the other hand, using Lemma~\ref{l5.8}, we get
\begin{align*}
  \left|\frac{d_{n,j}}{\binom{-u+j-1}{j}(u-2j)^{n-K}P(u-2j)}\right|
  & \le \frac{1}{|P(u-2j)|}
  \sum_{l=0}^K \frac{|\gamma_l(u-2j)^{K-l}|a_{n-l,j}}{\binom{-u+j-1}{j}(u-2j)^{n-l}} \\
  & \le \frac{\sum_{l=0}^K|\gamma_l(u-2j)^{K-l}|}{|\sum_{l=0}^K\gamma_l(u-2j)^{K-l}|}.
\end{align*}
The assumption~\eqref{conc} then gives
\begin{equation}
\label{cotxb}
  \left|\frac{d_{n,j}}{\binom{-u+j-1}{j}(u-2j)^{n-K} P(u-2j)}\right| \le C,
\end{equation}
for some constant $C$ depending on $P$ but not on $n$ or~$j$.
In order to prove both limits in Theorem~\ref{onl3}, consider
\[
  F(j) = \binom{-u+j-1}{j}P(u-2j),\quad G(j)=u-2j.
\]
We already know from the proof of Theorem~\ref{thchi} that $G$ satisfies the hypothesis of Theorem~\ref{sip}.
We next prove that for $j$ big enough $F$ satisfies
\[
  |F(j-1)F(j+1)| \le |F^2(j)|.
\]
Since $\binom{-u+j-1}{j}$ satisfies that hypothesis for all $j$, it will be enough to prove that for $j$ big enough
\[
  |P(u-2(j-1))P(u-2(j+1))| \le |P^2(u-2j)|.
\]
For the factors of $P$ corresponding to a real zero $\lambda$, we have
\[
  \frac{(u-2(j-1)-\lambda)(u-2(j+1)-\lambda)}{(u-2j-\lambda)^2}
  = \left(1-\frac{4}{(u-2j-\lambda)^2}\right) \le 1,
\]
for $j$ big enough. And for the factors of $P$ corresponding to a non-real zero $\lambda+i\mu$, we have
\begin{align*}
  & \frac{\bigl((u-2(j-1)-\lambda)^2+\mu^2\bigr) \bigl((u-2(j+1)-\lambda)^2+\mu^2\bigr)}
    {\bigl((u-2j-\lambda)^2+\mu^2\bigr)^2} \\
  & \qquad = \frac{\bigl((u-2j-\lambda)^2+\mu^2+4\bigr)^2 - 16(u-2j-\lambda)^2}{\bigl((u-2j-\lambda)^2+\mu^2]^2} \\
  & \qquad = 1+\frac{16+24\mu^2}{\bigl((u-2j-\lambda)^2+\mu^2\bigr)^2} - \frac{8}{(u-2j-\lambda)^2+\mu^2} 
  \le 1,
\end{align*}
for $j$ big enough.

Hence, for $j$ big enough (depending on $P$), $F$ satisfies the hypothesis~\eqref{ht41} of Theorem~\ref{sip}.
In particular,
\[
  -\frac{F(j-1)}{F(j)} = \frac{jP(u-2(j-1))}{(u-j+1)P(u-2j)}.
\]

Write $\tau_j(n)=d_{n,j}/F(j)G^{n-K}(j)$, so that
\[
  q_n^{0,u}(x) = \sum_{j=0}^nF(j)G^{n-K}(j)\tau_j(n)x^j.
\]
The estimate~\eqref{cotxa} shows that $\lim_n\tau_j(n)=1$, and~\eqref{cotxb} shows that $|\tau_j(n)|\le C$.

Hence, both limits in Theorem~\ref{onl3} follow from Remark~\ref{RemFjgrande} of Theorem~\ref{sip} (see the Appendix below).
\end{proof}

Using the symmetry~\eqref{cosxc}, we can also find the asymptotic for the leftmost zeros of $q_n^{0,u}$. In particular, we can prove that for $n$ big enough, $q_n^{0,u}$ has exactly $N^-$ real zeros in $(-\infty,-1)$ and they are simple.

We then have the following.

\begin{Cor}
\label{umv}
In the hypothesis of Theorem~\ref{onl3}, there exists $n_0$ such that if $n\ge n_0$ the polynomial $q_n^{0,u}$ has exactly $N^+$ positive (and simple) zeros and $N^-$ (simple) zeros in $(-\infty,-1)$, and if for some $n_1\ge n_0$, the zeros of
$q_{n_1}^{0,u}$ are real and simple, then for all $n\ge n_1$ the zeros of $q_n^{0,u}$ are also real and simple. Moreover, in that case the zeros of $q_{n+1}^{v,u}$ in $(-1,0)$, and $(0,+\infty)$ interlace the zeros of $q_n^{v,u}$ in $(-1,0)$ and $(0,+\infty)$, respectively, and the zeros of $q_{n}^{v,u}$ in $(-\infty,-1)$ interlace the zeros of $q_{n+1}^{v,u}$ in $(-\infty,-1)$.
\end{Cor}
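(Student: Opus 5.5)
Fix $n_0$ large. By Theorem~\ref{onl3}, for $n\ge n_0$ the polynomial $q_n^{0,u}$ has exactly $N^+$ positive zeros, all simple. The $(-\infty,-1)$ statement follows by applying the same theorem to $q_n^{-\phi,0,u}$ and transporting through the symmetry~\eqref{cosxc}: $x\mapsto -1-x$ sends the zeros of $q_n^{-\phi,0,u}$ in $(0,\infty)$ to zeros of $q_n^{\phi,0,u}$ in $(-\infty,-1)$. Under this symmetry the polynomial $P$ becomes $(-1)^K P(-x)$, so the sign-change set $\Hh^+$ for the reflected data is exactly $\Hh^-$ for the original, and the count is $N^-$. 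We take $n_0$ to be the larger of the two thresholds.

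For propagation we work in the variable $y=2x+1$. There $q_n^{0,u}(x)=s_n^{\phi,u}(y)$ by~\eqref{phex}, and $q_{n+1}=\tilde\Lambda_{0,u}q_n$ by~\eqref{cuy}. Equivalently, $s_{n+1}^{\phi,u}=(1-y^2)(s_n^{\phi,u})'+uy\,s_n^{\phi,u}$, since $\phi_{n+1}=0$ for $n\ge K$. The coefficients $a=0$, $b=u$ satisfy $b+|a|<0$.

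Suppose $q_{n_1}^{0,u}$ has only real simple zeros. Apply the lemma following Lemma~\ref{depaq2} to $p=s_{n_1}^{\phi,u}$. It gives that $s_{n_1+1}^{\phi,u}$ has only real zeros, and that they are exactly one simple zero in each gap between consecutive elements of $Z(p)\cup\{\pm1\}$. Zeros of $p$ at $\pm1$ do not occur, because $s_n(\pm1)=\prod(\phi_j\pm u)\ne0$ by~\eqref{conc} with $l=0$. In particular there are no new zeros in $(-\infty,\min)$ or $(\max,\infty)$, since these unbounded intervals are not gaps between consecutive elements. Induction then gives real simple zeros for all $n\ge n_1$, which is Lemma~\ref{hqj2}\eqref{hqj2-4}.

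For interlacing, list $Z(p)\cup\{\pm1\}$ in order and place exactly one new zero in each gap; this is the main bookkeeping step. Within $(-1,1)$, corresponding to $x\in(-1,0)$, the old zeros $\sigma_1<\dots<\sigma_k$ together with $-1,1$ give $k+1$ new zeros, one in each of $(-1,\sigma_1),\dots,(\sigma_k,1)$. Thus the new set has the smaller minimum and strictly alternates with the old one, so the zeros of $q_{n+1}$ in $(-1,0)$ interlace those of $q_n$.

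In $(1,\infty)$, corresponding to $x>0$, the old zeros are $1<\rho_1<\dots<\rho_{N^+}$. The gaps are $(1,\rho_1),\dots,(\rho_{N^+-1},\rho_{N^+})$, giving $N^+$ new zeros, and the count is consistent because $N^+$ is constant for $n\ge n_0$. The new minimum lies in $(1,\rho_1)$, below the old minimum, so the zeros of $q_{n+1}$ interlace those of $q_n$ there.

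In $(-\infty,-1)$ the old zeros are $\lambda_1<\dots<\lambda_{N^-}<-1$. The new zeros lie in $(\lambda_1,\lambda_2),\dots,(\lambda_{N^-},-1)$, so the old minimum is the smaller one. Hence the zeros of $q_n$ interlace those of $q_{n+1}$ there, which is the reversed order claimed in the statement.

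Finally, the statement for general $v$ follows from Remark~\ref{rv0}, since $q_n^{\phi,v,u}=q_n^{\phi+v,0,u}$, that is, from the case $v=0$ with $P$ replaced by $P(x+v)$. The main obstacle is making sure the counts $N^\pm$ supplied by Theorem~\ref{onl3} and its reflection are compatible with the one-zero-per-gap description. This compatibility is checked above: the gap structure preserves the numbers of zeros outside $[-1,0]$, so no contradiction arises.
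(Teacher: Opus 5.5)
Your proposal is correct and follows the paper's route. The counts $N^\pm$ come from Theorem~\ref{onl3} together with the symmetry~\eqref{cosxc}, propagation of real simple zeros is Lemma~\ref{hqj2}\eqref{hqj2-4}, and interlacing comes from the recurrence $q_{n+1}^{0,u}=\tilde\Lambda_{0,u}q_n^{0,u}$. The only difference is packaging: the paper reads off the interlacing from the identity $q_{n+1}^{0,u}(\zeta)=-2\zeta(1+\zeta)(q_n^{0,u})'(\zeta)$ at the zeros $\zeta$ of $q_n^{0,u}$, whereas you invoke the exact one-simple-zero-per-gap lemma following Lemma~\ref{depaq2}, which rests on the same sign evaluation and has the small advantage of handling the end gaps at $x=-1,0$ and the preservation of the counts outside $[-1,0]$ automatically.
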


\begin{proof}
We only have to prove the interlacing properties. But they follow by taking into account that (see~\eqref{cuy})
\[
  q_{n+1}^{0,u}(x) = -2x(1+x)(q_n^{0,u})'(x)+(u+2ux)q_n^{0,u}(x),
\]
and hence, if $\zeta$ is a zero of $q_n^{0,u}$ then
$q_{n+1}^{0,u}(\zeta)=-2\zeta(1+\zeta)(q_n^{0,u})'(\zeta)$.
\end{proof}

We are now ready to prove the part~\eqref{th6.3i-2} of Theorem~\ref{th6.3i}.

\begin{proof}[Proof of part~\eqref{th6.3i-2} of Theorem~\ref{th6.3i}]
As a consequence of Remark~\ref{rv0}, it is enough to prove the theorem for the case $v=0$. And, in light of Corollary~\ref{umv}, we only need to prove that, for $n$ big enough, all zeros of $q_n^{0,u}$ are real and simple.

We proceed by induction on $K$.
For $K=1$, the result is a direct consequence of Obreshkov's theorem (see \cite[p.~9]{Hou}) applied to $\scvu_n$, $\scvu_{n-1}$. Assume now that the theorem is true for~$K$.

Let $\phi_i$ be, $1\le i\le K+1$, real numbers with $\phi_i \ne \pm(u-2l)$, $l=0,1,\dots$, for $1\le i\le K+1$. If $\phi_i\in (u,-u)$, $1\le i\le K+1$, then the theorem follows from the part~\eqref{th6.3i-1} of Theorem~\ref{th6.3i}. Hence we assume that $\phi_{K+1}>-u$ (the case $\phi_{K+1}<u$ would then follow by using the symmetry~\eqref{cosxc}).

In order to simplify the notation we write $q_n=s_n^{\phi,0,u}$ and $p_n=s_n^{\phi^{\{K+1\}},0,u}$ (see the definition of $\phi^{\{l\}}$ in Lemma~\ref{lrch}).
Since $s_n^{\phi^{K+1,-\phi_{K+1}},0,u} = s_n^{\phi^{\{K+1\}},0,u} = p_n$ (see~\eqref{sph}), the following identity follows from~\eqref{mcc}:
\begin{equation}
\label{pmi}
  p_n(z) = q_n(x)-\phi_{K+1}p_{n-1}(x).
\end{equation}
The induction hypothesis says that the zeros of $p_n$ are simple and real for $n$ big enough. Using Theorem~\ref{onl3}, we write $\zeta_i^{\Ll}$, $1\le i \le N^-$, for the zeros of $p_n$ in $(-\infty,-1)$ arranged in decreasing order, $\zeta_i^{\C}$, $1\le i \le n-N^- - N^+$, for the zeros of $p_n$ in $(-1,0)$ arranged in decreasing order, and finally $\zeta_i^{\R}$, $1\le i\le N^+$, for the zeros of $p_n$ in $(0,+\infty)$ arranged in increasing order.

Setting $P(x) = \prod_{i=1}^K(x+\phi_i)$, we also have that $N^+$ is the number of elements of the set $\Hh^+$~\eqref{h+}.

The identity~\eqref{pmi} then gives
\begin{equation}
\label{esm}
  q_n(\zeta) = \phi_{K+1}p_{n-1}(\zeta)
\end{equation}
for any zero of~$p_n$.
Taking into account that the sign of the leading coefficient of $p_n$ is $(-1)^n$,
and that in $(-\infty,-1)$ the zeros of $p_{n-1}$ interlace the zeros of $p_n$, and in
$(-1,0)$ and $(0,+\infty)$ the zeros of $p_n$ interlace the zeros of $p_{n-1}$, respectively,
we get
\begin{align}
\label{esm1}
  \sgn\bigl(p_{n-1}(\zeta_{i}^{\Ll})\bigr) &= (-1)^{N^-+i+1},\quad 1\le i\le N^-, \\
\label{esm2}
  \sgn\bigl(p_{n-1}(\zeta_{i}^{\C})\bigr) &= (-1)^{n+N^++i},\quad 1\le i\le n-N^--N^+, \\
\label{esm3}
  \sgn\bigl(p_{n-1}(\zeta_{i}^{\R})\bigr) &= (-1)^{n+N^++i},\quad 1\le i\le N^+.
\end{align}
Hence,~\eqref{esm} and~\eqref{esm1} say that
\begin{equation}
\label{ton1}
\begin{aligned}
  q_n \text{ has an odd }&\text{number of zeros in} \\ 
  &\text{each interval } (\zeta_{i+1}^{\Ll},\zeta_i^{\Ll}), \ 1\le i\le N^--1,
\end{aligned}
\end{equation}
and so $q_n$ has at least $N^--1$ zeros in $(-\infty,-1)$. Similarly,
\eqref{esm} and~\eqref{esm2} say that
\begin{equation}
\label{ton2}
\begin{aligned}
  q_n \text{ has an odd } & \text{number of zeros in}\\ 
  &\text{each interval } (\zeta_{i+1}^{\C},\zeta_i^{\C}),\ 1\le i\le n-1-N^--N^+,
\end{aligned}
\end{equation}
that is, $q_n$ has at least $n-1-N^--N^+$ zeros in $(-1,0)$. And finally,~\eqref{esm} and~\eqref{esm3} say that
\begin{equation*}
\begin{aligned}
  q_n \text{ has an odd }&\text{number of zeros in} \\ 
  &\text{each interval } (\zeta_{i}^{\R},\zeta_{i+1}^{\R}),\ 1\le i\le N^+-1,
\end{aligned}
\end{equation*}
and so $q_n$ has at least $N^+-1$ zeros in $(0,+\infty)$.

Since $\phi_{K+1}>-u$, write $m_0=\lfloor (u+\phi_{K+1})/2 \rfloor$, so that $u-2m_0+\phi_{K+1} > 0 > u-2m_0-2+\phi_{K+1}$ ($\lfloor x\rfloor$ denotes the floor function).
Let $\tilde P(x)=\prod_{i=1}^{K+1}(x+\phi_i)$,
\begin{align*}
  \tilde \Hh^+ &= \{l: l \ge 0, \ \tilde P(u-2l)\tilde P(u-2l-2)<0\},\\
  \tilde \Hh^- &= \{l: l \ge 0, \ \tilde P(-u+2l)\tilde P(-u+2l+2)<0\}
\end{align*}
and denote by $\tilde N^+$ and $\tilde N^-$ the number of elements of $\tilde \Hh^+$ and $\tilde \Hh^-$, respectively. We have that $\tilde P(x)=(x+\phi_{K+1})P(x)$, and for $n$ big enough $\tilde N^+$ is the number of positive zeros of $q_n$, and $\tilde N^-$ is the number of zeros of $q_n$ in $(-\infty,-1)$ (see Theorem~\ref{onl3}).

We have to consider the change in the number of elements of $\tilde \Hh^+$ and $\Hh^+$, and similarly for $\tilde \Hh^-$ and~$\Hh^-$.
But since, for $0\le l$,
\[
  \sgn \bigl(\tilde P(-u+2l)\tilde P(-u+2l+2)\bigr) = \sgn\bigl(P(-u+2l) P(-u+2l+2)\bigr),
\]
we conclude that $\tilde \Hh^-=\Hh^-$. So we have only to discuss the following two cases:
\begin{enumerate}
\item $P(u-2m_0)P(u-2m_0-2) > 0$.
\item $P(u-2m_0)P(u-2m_0-2) < 0$.
\end{enumerate}

We start considering the first case: $P(u-2m_0)P(u-2m_0-2)>0$.
Then $\tilde P(u-2m_0)\tilde P(u-2m_0-2)<0$ and hence $\tilde N^+=N^++1$ and $\tilde N^-=N^-$. Theorem~\ref{onl3} says then that for $n$ big enough $q_n$ has to have $N^++1$ positive and simple zeros and $N^-$ zeros in $(-\infty,-1)$.
Moreover,~\eqref{ton2} gives that $q_n$ has at least $n-1-N^--N^+$ negative zeros in $(-\zeta_{n-N^--N^+}^{\C},\zeta_1^{\C})\subset (-1,0)$. This shows that $q_n$ has $n$ zeros and they have to be simple.

For the second case, we have $P(u-2m_0)P(u-2m_0-2)<0$, and so $\tilde P(u-2m_0)\tilde P(u-2m_0-2)>0$. Thus, $m_0 \in \Hh^+$ and
\[
  \tilde \Hh^+=\Hh^+\setminus\{m_0\}.
\]
As a consequence, we deduce that $\tilde N^+=N^+-1$. Write
\[
  \Gg^+ = \{1,2,3,\dots\}\setminus \Hh^+=\{g_i:1\le i\},
  \quad 
  \tilde \Gg^+ = \{1,2,3,\dots\}\setminus \tilde \Hh^+=\{\tilde g_i:1\le i\},
\]
with $g_i<g_j$, $\tilde g_i<\tilde g_j$ if $i<j$. It follows that $\tilde g_{m_0} < g_{m_0}$, because $\tilde \Gg^+=\Gg^+\cup \{m_0\}$.

Since $\tilde N^+=N^+-1$ and $\tilde N^-=N^-$, we have that $q_n$ has $N^+-1$ positive zeros and $N^-$ zeros in $(-\infty,-1)$ (and they are simple).

Write $\tilde \zeta_j^-$ for the negative zeros of $q_n$ arranged in decreasing order,
where either $1\le j\le n-1-N^+$ or $1\le j\le n+1-N^+$ (this is because
\eqref{ton2} gives that $q_n$ has at least $n-1-N^--N^+$ negative zeros in $(\zeta_{n-N^--N^+}^{\C},\zeta_1^{\C})\subset (-1,0)$ and $N^-$ zeros in $(-\infty,-1)$). This shows that $q_n$ has to have at least $n-2$ real zeros.

Theorem~\ref{onl3} says that for $n$ big enough, $\tilde \zeta_j^-$, $j=1,\dots, m_0$, are simple and belong the interval $(v_{n},0)$, where
\[
  v_{n} = (1+\eps)\frac{\tilde g_{m_0}+1}{u-\tilde g_{m_0}} 
  \frac{\tilde P(u-2\tilde g_{m_0})}{\tilde P(u-2\tilde g_{m_0}-2)}
  \left(\frac{u-2\tilde g_{m_0}}{u-2\tilde g_{m_0}-2}\right)^{n-K}.
\]
Also, Theorem~\ref{onl3} applied to $p_n$ gives that $\zeta_{m_0}^-<u_{n}$, where
\[
  u_{n} = (1-\eps) \frac{g_{m_0}+1}{u- g_{m_0}}
  \frac{P(u-2g_{m_0})}{P(u-2g_{m_0}-2)} \left(\frac{u-2 g_{m_0}}{u-2 g_{m_0}-2}\right)^{n-K}.
\]
We next assume the following \emph{claim}: $u_n < v_n$.

Hence, on the one hand, $q_n$ has $N^+-1$ simple and positive zeros. This gives that $q_n$ has to have at most $n-N^++1$ negative zeros.
$q_n$ has exactly $m_0$ negative zeros in $(v_n,0)$ and they are simple.
The claim implies that $q_n$ has at least $m_0$ simple and negative zeros in $(\zeta_{m_0}^-,0)$, and, since the number of negative zeros is at most $n-N^++1$, we deduce from~\eqref{ton1} and~\eqref{ton2} that $q_n$ has exactly $n-m_0-N^+$ negative zeros in $(-\infty,-1)\cup (\zeta_{n-N^--N^+}^{\C},\zeta_{m_0}^{\C})$, and they are simple. Hence $q_n$ has at least $n-N^+$ negative and simple zeros. Hence, we can conclude that $q_n$ has to have one more negative and simple zero.

We finish proving the claim. We have to prove that
\begin{multline*}
  (1-\eps) \frac{g_{m_0}+1}{u- g_{m_0}} \frac{P(u-2g_{m_0})}{P(u-2g_{m_0}-2)}
  \left(\frac{u-2 g_{m_0}}{u- 2g_{m_0}-2}\right)^{n-K}
  \\
  < (1+\eps)\frac{\tilde g_{m_0}+1}{u-\tilde g_{m_0}} 
  \frac{\tilde P(u-2\tilde g_{m_0})}{\tilde P(u-2\tilde g_{m_0}-2)}
  \left(\frac{u-2 \tilde{g}_{m_0}}{u-2\tilde{g}_{m_0}-2}\right)^{n-K}.
\end{multline*}
On the one hand, since $g_{m_0}\in \Gg^+$, we get $\frac{P(u-2g_{m_0})}{P(u-2
g_{m_0}-2)}>0$. And, on the other hand, since $\tilde g_{m_0}\in \tilde \Gg^+$ we have $\frac{\tilde P(u-2\tilde g_{m_0})}{\tilde P(u-2\tilde g_{m_0}-2)}>0$. 
Also, $u - g_{m_0} < 0$ and $u - \tilde{g}_{m_0} < 0$. Hence, the previous inequality follows for $n$ big enough (depending on $m_0$) because $\tilde g_{m_0} < g_{m_0}$, so that
\[
  \frac{u-2 g_{m_0}}{u- 2g_{m_0}-2}
  > \frac{u-2 \tilde{g}_{m_0}}{u-2\tilde{g}_{m_0}-2}.
\qedhere
\]
\end{proof}

The following example shows that if the assumption~\eqref{conc} fails, then the polynomials
$q_n^{u,v}$ can have non-real zeros for all $n$. The proof is similar to \cite[Lemma 10]{Duran-Bell}, but using the asymptotic of Lemma~\ref{l5.8}, so we omit it.

\begin{Lem}
Let $P(x) = (x-u-v+2l)^2$, $l\ge 1$, so that $\gamma_0=1$, $\gamma_1=-2(u+v-2l)$ and $\gamma_2=(u+v-2l)^2$, and consider the polynomials
\[
  q_n^{v,u}(x) = \sum_{j=0}^2 \gamma_j\scvu_{n-j}(2x+1).
\]
If $|v|+u<0$ then $q_n^{v,u}$ has always exactly two non-real zeros for all $n\ge 2$.
\end{Lem}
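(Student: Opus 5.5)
We first reduce to $v=0$ via Remark~\ref{rv0}: replacing $u+v$ by $u$ and $P(x)$ by $P(x+v)$ turns the statement into one about $q_n^{0,u}$ with $P(x)=(x-w)^2$, where $w=u-2l<0$, $l\ge1$. The plan then has two parts. First, a monotonicity argument reduces ``exactly two non-real zeros for all $n\ge2$'' to ``some non-real zeros for all large $n$''. Second, an exact coefficient formula shows that Newton's inequalities fail at index $l$ for large $n$.

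\emph{Reduction to large $n$.} By Lemma~\ref{hqj2}\eqref{hqj2-3} with $K=2$, $q_n^{0,u}$ has at least $n-2$ real zeros, so it has at most two non-real zeros. By \eqref{cuy}, $q_{n+1}^{0,u}=\tilde\Lambda_{0,u}q_n^{0,u}$. Lemma~\ref{depaq2}, transported by \eqref{cuy1}, shows that applying $\tilde\Lambda_{0,u}$ raises the number of real zeros (with multiplicity) by at least one, while the degree rises by exactly one. Hence the number of non-real zeros of $q_n^{0,u}$ is nonincreasing in $n$. It therefore suffices to prove that $q_n^{0,u}$ is not real-rooted for all sufficiently large $n$. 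Then the count is nonincreasing, at most $2$, even (real coefficients), and positive for large $n$, so it equals $2$ for every $n\ge2$.

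\emph{Exact coefficients.} Write $q_n^{0,u}(x)=\sum_j d_{n,j}x^j$, so $d_{n,j}=\sum_{k=0}^2\gamma_k a^{0,u}_{n-k,j}$. From the explicit formula in the proof of Lemma~\ref{l5.8}, expand $j!\,S(i,j)=\sum_{r=0}^j(-1)^{j-r}\binom{j}{r}r^i$ and sum the binomial series in $i$. This gives the exact identity
\[
a^{0,u}_{n,j}=\binom{-u+j-1}{j}\sum_{r=0}^j(-1)^{j-r}\binom{j}{r}(u-2r)^n .
\]
Consequently
\[
d_{n,j}=\binom{-u+j-1}{j}\sum_{r=0}^j(-1)^{j-r}\binom{j}{r}(u-2r)^{n-2}P(u-2r).
\]
In the sum, the term with the largest $|u-2r|$ dominates unless it vanishes.
\begin{itemize}
\item For $j=l\pm1$ the dominant term is $r=j$, and $P(u-2j)=P(w\mp2)=4\ne0$. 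Thus $d_{n,l\pm1}\sim\binom{-u+j-1}{j}\,4\,(w\mp2)^{n-2}$.
\item For $j=l$ the term $r=l$ vanishes because $P(w)=0$. Hence $d_{n,l}=O(|w+2|^{n})$, using $l\ge1$ so that $r=l-1$ exists.
\end{itemize}
Since $w+2$ and $w-2$ are both negative, $d_{n,l-1}$ and $d_{n,l+1}$ have the same sign. Therefore
\[
\frac{d_{n,l}^2}{d_{n,l-1}d_{n,l+1}}=O\!\left(\frac{|w+2|^{2n}}{|w+2|^{n}|w-2|^{n}}\right)=O\!\left(\Bigl|\frac{w+2}{w-2}\Bigr|^{n}\right)\to0,
\]
because $|w+2|<|w-2|$ for $w<0$.

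\emph{Conclusion.} Newton's inequalities, $d_{n,j}^2\ge d_{n,j-1}d_{n,j+1}$ for a real-rooted polynomial, therefore fail at $j=l$ for all large $n$. So $q_n^{0,u}$ has non-real zeros for large $n$, and the reduction above gives exactly two non-real zeros for every $n\ge2$.

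The main obstacle is the exact cancellation at $j=l$. The asymptotic of Lemma~\ref{l5.8} alone only says $d_{n,l}=o\bigl(|w|^n\bigr)$, which is too weak. One must use the exact exponential-sum form of $a_{n,j}$ to see that the next term is of order $|w+2|^n$, and that this is small compared with the geometric mean of $|w+2|^n$ and $|w-2|^n$.
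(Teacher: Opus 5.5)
Your two main ingredients are correct: the number of non-real zeros is nonincreasing, and Newton's inequality fails at $j=l$. The opening reduction to $v=0$, however, is not valid.

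\textbf{The reduction to $v=0$.} Remark~\ref{rv0} and Lemma~\ref{eqp} actually give $q_n^{v,u}(x)=s_n^{\phi',u}(2x+1)$ with $\phi'=(v-\theta_1,v-\theta_2,v,v,\dots)$, where $\theta_1=\theta_2=u+v-2l$. The $v=0$ polynomial attached to $P(x+v)$ is instead $s_n^{\phi'',u}(2x+1)$ with $\phi''=(v-\theta_1,v-\theta_2,0,0,\dots)$. The tails differ, so these are different polynomials. Already for $n=3$, \eqref{mcc} gives $s_3^{\phi',u}-s_3^{\phi'',u}=v\,s_2^{\phi'',u}\neq 0$. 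Reading your substitution as $u+v\mapsto u$ does not help either. The coefficients of $s_n^{v,u}(2x+1)$ and $s_n^{0,u+v}(2x+1)$ differ by the $n$-independent factor $\binom{-u+j-1}{j}/\binom{-u-v+j-1}{j}$, and there is no reason such a rescaling of coefficients should preserve the number of real zeros.

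\textbf{The fix.} The step is not load-bearing, so work with $q_n^{v,u}$ throughout. Your derivation from the explicit formula in the proof of Lemma~\ref{l5.8}, carried out without setting $v=0$, gives
\[
a^{v,u}_{n,j}=\binom{-u+j-1}{j}\sum_{r=0}^{j}(-1)^{j-r}\binom{j}{r}(u+v-2r)^n .
\]
This can also be checked directly against the recurrence $a_{n+1,j}=(u+v-2j)a_{n,j}+2(u-j+1)a_{n,j-1}$. Hence
\[
d_{n,j}=\binom{-u+j-1}{j}\sum_{r=0}^{j}(-1)^{j-r}\binom{j}{r}(u+v-2r)^{n-2}\,4(l-r)^2 .
\]
Set $w=u+v-2l$. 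Your estimates for $d_{n,l-1}$, $d_{n,l}$, $d_{n,l+1}$ and the violation of Newton's inequality then go through verbatim. They only use $u<0$, which makes the binomial factors positive, and $u+v<0$, which gives $w+2\le u+v<0$ and $|w+2|<|w-2|$. Apply the monotonicity argument to $q_{n+1}^{v,u}=\tilde\Lambda_{v,u}q_n^{v,u}$ itself; this is legitimate because Lemmas~\ref{hqj2} and~\ref{depaq2} only require $|v|+u<0$. With this change your proof is complete.

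\textbf{Comparison with the paper.} The paper omits the proof, pointing to an analogous lemma in earlier work and to the asymptotics of Lemma~\ref{l5.8}. Your skeleton is the natural way to carry that out: at least $n-2$ real zeros from Lemma~\ref{hqj2}, a nonincreasing count of non-real zeros, and a violated Newton inequality at the index where the leading terms cancel.

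Your closing remark needs one correction: only the limit statement of Lemma~\ref{l5.8} is too weak. The two-sided bound \eqref{cotsv} gives $a_{m,l}=\binom{-u+l-1}{l}w^m+e_m$ with $|e_m|\le l\binom{-u+l-1}{l}|w+2|^m$. Since $\sum_{k=0}^{2}\gamma_k w^{n-k}=w^{n-2}P(w)=0$, this already yields $d_{n,l}=O(|w+2|^n)$, and the same bound handles $d_{n,l\pm1}$. So the exact exponential-sum formula is not needed. It is cleaner, though, and gives the sharper $d_{n,l}\sim -4l\binom{-u+l-1}{l}(w+2)^{n-2}$.
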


We guess that if $|v|+u<0$ and the polynomial $P$ \eqref{polp} satisfies~\eqref{conc}, the polynomial $q_n^{u,v}$ \eqref{polqn} will have only real zeros for $n$ big enough, even if $P$ has non-real zeros.

\begin{Conj}
If $|v|+u<0$ and $P(\pm(u+v-2l)) \ne 0$ for $l\in \NN$, then for $n$ big enough all the zeros of $q_n^{u,v}$ \eqref{polqn} are real.
\end{Conj}

The conjecture is saying that the first order differential operator
$\tilde \Lambda_{v,u}$ \eqref{tguv} is a real zero increasing operator in the following deeper sense than that established in the first part of Lemma~\ref{hqj2}. Indeed, take a monic polynomial $p$ of degree $K$, and write
\[
  p(x) = c\sum_{j=0}^K \gamma_j \scvu_{K-j}(x)
\]
(with $c$ chosen so that $\gamma_0=1$). It is easy to check that $q_n^{v,u} = \tilde \Lambda_{v,u}^n(p)$ (see~\eqref{cuy}).
Hence, if the polynomial $P(x) = \sum_{j=0}^K \gamma_{j}x^{K-j}$ satisfies $P(\pm(u+v-2l)) \ne 0$ for $l\in \NN$, the conjecture says that for $n$ big enough the polynomial $\tilde \Lambda_{v,u}^n(p)$ has only real zeros, no matter the number of real zeros of~$p$.

\section{Appendix}

The purpose of this section is to prove Theorem~\ref{sip}, which generalizes \cite[Theorem~4]{dzb}. This is a powerful result which provides asymptotic for the zeros
of polynomials $P_n(x) = \sum_{j=0}^n a_{n,j}x^j$
whose coefficients $a_{n,j}$ behave as $F(j)G^n(j)$, where $F$ and $G$ satisfy
$G(0),G(1)>0$ and
\[
  |F(j-1)F(j+1)|\le F^2(j),\quad 0 < G(j-1)G(j+1) < G^2(j).
\]
That is, $|F(j)|$ and $G(j)$ are logarithmically concave sequences.

We first prove that if we keep the degree fixed by taking $P_{N,n}(x)=\sum_{j=0}^Na_{n,j}x^j$ then the polynomials $P_{N,n}$ have simple and real zeros for $n$ big enough, and the zeros of $P_{N,n+1}$ interlace the zeros of $P_{N,n}$ (see Lemma~\ref{lemaQn>d}). The asymptotic properties for the zeros of $P_n$ then follow from the localization of the zeros of $P_{N,n}$. Actually what Theorem~\ref{sip} is saying is that the leftmost real zero $\zeta_{n,1}$ of $P_n$ behaves as the zero of the two first monomials of $P_n$, that is, as the zero of $a_{n,0}+a_{n,1}x$, so that
\[
  \zeta_{n,1}\sim -\frac{a_{n,0}}{a_{n,1}}\sim -\frac{F(0)}{F(1)}\left(\frac{G(0)}{G(1)}\right)^n.
\]
Similarly, the $m$-th leftmost real zero $\zeta_{n,m}$ of $P_n$ behaves as the zero of the two $m$-th monomials of $P_n$, that is, as the zero of $a_{n,m-1}+a_{n,m}x$, so that
\[
  \zeta_{n,m}\sim -\frac{a_{n,m-1}}{a_{n,m}}
  \sim -\frac{F(m-1)}{F(m)} \left(\frac{G(m-1)}{G(m)}\right)^n.
\]
The generality of Theorem~\ref{sip} far exceeds its application to the study of zeros of GKP polynomials. Indeed, in this paper we have used Theorem~\ref{sip} only for the very particular case of $G$ being a polynomial of degree~$1$.

We will need some previous lemmas in order to prove Theorem~\ref{sip}.

\begin{Lem}
\label{lemaG}
Let $G : \NN \cup \{0\} \to (0,+\infty)$ be a (positive) function such that
\[
  G(j-1) G(j+1) < G(j)^2
\]
for every $j \in \NN$. Then:
\begin{enumerate}
\item\label{Gj}
For each $m \in \NN$ and $j \in \NN \cup \{0\}$,
\[
  G(j) \left(\frac{G(m-1)}{G(m)}\right)^j
  \le \frac{G(m-1)^m}{G(m)^{m-1}}.
\]
Moreover, the left-hand side is strictly increasing in $j$ for $j \le m-1$ and strictly decreasing for $j \ge m$. The inequality is an equality for $j = m-1$ and $j = m$; otherwise, it is a strict inequality.

\item\label{limite0}
For each $m \in \NN$,
\[
  \lim_{j \to +\infty} G(j) \left(\frac{G(m-1)}{G(m)}\right)^j = 0.
\]
\end{enumerate}
\end{Lem}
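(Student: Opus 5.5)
The natural approach is to take logarithms. Put $g(j) = \log G(j)$, which is defined since $G>0$. The hypothesis $G(j-1)G(j+1) < G(j)^2$ says that $g(j-1)+g(j+1) < 2g(j)$, so $g$ is strictly concave on $\NN\cup\{0\}$. Equivalently, the forward differences $\delta_j := g(j+1)-g(j) = \log\bigl(G(j+1)/G(j)\bigr)$ are strictly decreasing in $j$. Fix $m\in\NN$ and set $\lambda := \log\bigl(G(m-1)/G(m)\bigr) = -\delta_{m-1}$. The quantity to study is
\[
  h(j) := \log\Bigl( G(j) \bigl(G(m-1)/G(m)\bigr)^j \Bigr) = g(j) + \lambda j,
\]
whose forward differences are $h(j+1)-h(j) = \delta_j - \delta_{m-1}$.

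For part~(1), I would use the strict decrease of $\delta_j$. It gives $h(j+1)-h(j) > 0$ for $j < m-1$, $h(m)-h(m-1) = 0$, and $h(j+1)-h(j) < 0$ for $j \ge m$. Hence $h$ is strictly increasing on $\{0,\dots,m-1\}$, satisfies $h(m-1)=h(m)$, and is strictly decreasing on $\{m, m+1,\dots\}$. So the maximum is attained exactly at $j=m-1$ and $j=m$, and the inequality is strict elsewhere. It remains to compute the maximal value:
\[
  G(m)\bigl(G(m-1)/G(m)\bigr)^m = G(m-1)^m/G(m)^{m-1}.
\]
This matches the stated right-hand side, and part~(1) is complete.

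For part~(2), I would again use strict monotonicity of the differences. For $j \ge m+1$,
\[
  h(j) = h(m) + \sum_{i=m}^{j-1} (\delta_i - \delta_{m-1}) \le h(m) + (j-m)(\delta_m - \delta_{m-1}).
\]
Since $\delta_m - \delta_{m-1} < 0$ is a fixed negative constant, $h(j)\to -\infty$ linearly. Therefore $G(j)\bigl(G(m-1)/G(m)\bigr)^j = e^{h(j)} \to 0$.

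I expect no real obstacle here. The only point to be careful about is that strict log-concavity yields the fixed negative gap $\delta_m-\delta_{m-1}$, which is what forces the limit in part~(2) rather than mere boundedness.
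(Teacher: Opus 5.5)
Your proof is correct. Part~(1) is the paper's argument written additively: the paper observes that $G(j)/G(j+1)$ is strictly increasing and compares it with $G(m-1)/G(m)$, which is exactly your statement that $\delta_j$ is strictly decreasing and $h(j+1)-h(j)=\delta_j-\delta_{m-1}$.

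Part~(2) is where you genuinely differ. The paper argues indirectly, by contradiction:
\begin{itemize}
\item by part~(1) the sequence is eventually decreasing, so the limit $\ell_m\in[0,+\infty)$ exists;
\item from $G(m-1)/G(m)<G(m)/G(m+1)$ one gets $\ell_m\le\ell_{m+1}$;
\item if some $\ell_{m_0}>0$, then $G(j+1)/G(j)\to G(m)/G(m-1)$ for every $m\ge m_0$, which forces $G(m)=c\lambda^m$ for $m\ge m_0$ and contradicts strict log-concavity.
\end{itemize}

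Your argument is direct. Since $\delta_i\le\delta_m$ for $i\ge m$, each increment of $h$ beyond $m$ is bounded by the fixed negative number $\delta_m-\delta_{m-1}$. Exponentiating gives the explicit geometric bound
\[
  G(j)\left(\frac{G(m-1)}{G(m)}\right)^j \le \frac{G(m-1)^m}{G(m)^{m-1}}
  \left(\frac{G(m-1)G(m+1)}{G(m)^2}\right)^{j-m}, \qquad j\ge m.
\]
This route is shorter and avoids both the auxiliary limits $\ell_m$ and the contradiction. It also yields a quantitative decay rate, which would, for instance, let one choose $N$ explicitly in the proof of Theorem~\ref{sip}. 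The paper's route gives only the qualitative statement, and it uses no hypotheses beyond the ones you use.
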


Part~\eqref{Gj} is saying that for fixed $m$, the sequence $G(j) \left(\frac{G(m-1)}{G(m)}\right)^j$, $j\ge 1$, is unimodal with a plateau of two points $j=m-1,m$ (see \cite[Section 7.1]{Com} for the definition of unimodal sequences).

\begin{proof}
We start proving part~\eqref{Gj}. It is plain that the inequality is an equality for $j=m-1$ and $j=m$. So, we just need to prove that
\[
  G(j) \left(\frac{G(m-1)}{G(m)}\right)^j
\]
is strictly increasing in $j$ for $j \le m-1$ and strictly decreasing for $j \ge m$.

The assumption on $G$ means that the quotient $\frac{G(j)}{G(j+1)}$ is strictly increasing. If $j < m-1$, then
\[
  \frac{G(j)}{G(j+1)} < \frac{G(m-1)}{G(m)},
\]
which immediately gives
\[
  G(j) \left( \frac{G(m-1)}{G(m)} \right)^j < G(j+1) \left( \frac{G(m-1)}{G(m)} \right)^{j+1},
\]
as we wanted. If $j > m$, then the argument is the same, with the inequalities reversed.

As for part~\eqref{limite0}, let us fix $m \in \NN$. It follows from part~\eqref{Gj} that there exists the limit
\[
  \lim_{j \to +\infty} G(j) \left(\frac{G(m-1)}{G(m)}\right)^j = \ell_m \in [0,+\infty).
\]
Moreover, the assumption $G(m-1) G(m+1) < G(m)^2$ gives trivially that $\ell_m \le \ell_{m+1}$. Let us suppose then that $\ell_{m_0} > 0$ for some~$m_0$. Just by the definition of $\ell_{m_0}$ we have
\[
  1 = \lim_{j \to +\infty}
  \frac{G(j+1) \left(\frac{G(m_0-1)}{G(m_0)}\right)^{j+1}}
  {G(j) \left(\frac{G(m_0-1)}{G(m_0)}\right)^j},
\]
that is,
\[
  \lim_{j \to +\infty} \frac{G(j+1)}{G(j)} = \frac{G(m_0)}{G(m_0-1)}.
\]
Since the limit on the left-hand side does not depend on $m_0$ and $\ell_{m_0} \le \ell_{m}$ for each $m > m_0$, it follows that
\[
  \frac{G(m)}{G(m-1)} = \frac{G(m_0)}{G(m_0-1)},
\]
that is,
\[
  G(m) = c \lambda^m
\]
for each $m \ge m_0$, with some constants $c, \lambda > 0$. But this contradicts the hypothesis $G(m-1) G(m+1) < G(m)^2$. Therefore, $\ell_m = 0$ for each $m \in \NN$.
\end{proof}

In the sequel, given two intervals $I, J \subseteq \RR$, we will write $I \prec J$ when $\sup I < \inf J$.

\begin{Lem}
\label{lemaImn}
Let $F : \NN \cup \{0\} \to \RR \setminus \{0\}$ be an arbitrary function.
Let $G : \NN \cup \{0\} \to (0,+\infty)$ be a (positive) function such that
\[
  G(j-1) G(j+1) < G(j)^2
\]
for every $j \in \NN$.  Fix $K \in \RR$. For each $m \in \NN$, $n \in \NN$ and $\eps \in (0,1)$, let
\begin{multline}
  I_{m,n}
  = \Bigg\{ x \in \RR : \Bigg| \frac{x}{-\left|\frac{F(m-1)}{F(m)}\right| 
  \left(\frac{G(m-1)}{G(m)}\right)^{n-K}} - 1 \Bigg| < \eps
  \Bigg\}
  \\
  = \biggl(
  -(1+\eps) \left|\tfrac{F(m-1)}{F(m)}\right| \left(\tfrac{G(m-1)}{G(m)}\right)^{n-K},
  -(1-\eps) \left|\tfrac{F(m-1)}{F(m)}\right| \left(\tfrac{G(m-1)}{G(m)}\right)^{n-K}
  \biggr)
\label{Imn}
\end{multline}
and
\begin{multline}
  I_{m,n}'
  = \Bigg\{ x \in \RR : \Bigg|
  \frac{x}{-\left|\tfrac{F(m)}{F(m+1)}\right| \tfrac{m}{m+1} \left(
  \frac{G(m)}{G(m+1)}
  \right)^{n-K}}
  -1
  \Bigg| < \eps
  \Bigg\}
  \\
  = \biggl(
  -(1+\eps) \left|\tfrac{F(m)}{F(m+1)}\right| \tfrac{m}{m+1} \left(
  \tfrac{G(m)}{G(m+1)}
  \right)^{n-K},
  -(1-\eps) \left|\tfrac{F(m)}{F(m+1)}\right| \tfrac{m}{m+1} \left(
  \tfrac{G(m)}{G(m+1)}
  \right)^{n-K}
  \biggr)
\label{Imn'}
\end{multline}
(for simplicity, we only stress the dependence on $m$ and~$n$).
Then:
\begin{enumerate}
\item
\label{lemaImn-1}
Given $m$, for every $\eps < \frac{1}{2m+1}$ and every $n \in \NN$, $I_{m+1,n} \prec I_{m,n}'$.
\item
\label{lemaImn-2}
Given $m$ and $\eps_0 < 1$, there exists some $n_0 \in \NN$ such that for every $n > n_0$ and $0 < \eps < \eps_0$, $I_{m,n}' \prec I_{m,n}$.
\end{enumerate}
\end{Lem}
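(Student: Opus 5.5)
Both intervals lie on the negative half-line, so $I \prec J$ amounts to comparing the left endpoint of $J$ with the right endpoint of $I$. Write
\[
  A_m = \left|\tfrac{F(m-1)}{F(m)}\right|, \qquad r_m = \tfrac{G(m-1)}{G(m)}.
\]
Then $I_{m,n} = \bigl(-(1+\eps)A_m r_m^{n-K}, -(1-\eps)A_m r_m^{n-K}\bigr)$ and $I'_{m,n} = \bigl(-(1+\eps)\tfrac{m}{m+1}A_{m+1} r_{m+1}^{n-K}, -(1-\eps)\tfrac{m}{m+1}A_{m+1} r_{m+1}^{n-K}\bigr)$. The log-concavity of $G$ means exactly that $r_j$ is strictly increasing, so $r_m < r_{m+1}$. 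This monotonicity is the only ingredient needed.

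For part~\eqref{lemaImn-1}, $I_{m+1,n}$ and $I'_{m,n}$ are centred at $-A_{m+1}r_{m+1}^{n-K}$ and $-\frac{m}{m+1}A_{m+1}r_{m+1}^{n-K}$, so they share the same scale factor. Hence $I_{m+1,n}\prec I'_{m,n}$ is equivalent to
\[
  -(1-\eps) < -(1+\eps)\tfrac{m}{m+1},
\]
that is, $(1+\eps)m < (1-\eps)(m+1)$, which is equivalent to $\eps(2m+1) < 1$. This is exactly the hypothesis $\eps < \frac{1}{2m+1}$, and it holds independently of $n$.

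For part~\eqref{lemaImn-2}, we need
\[
  -(1-\eps)\tfrac{m}{m+1}A_{m+1}r_{m+1}^{n-K} < -(1+\eps)A_m r_m^{n-K},
\]
which is equivalent to
\[
  \Bigl(\frac{r_{m+1}}{r_m}\Bigr)^{n-K} > \frac{(1+\eps)(m+1)A_m}{(1-\eps)mA_{m+1}}.
\]
For $0<\eps<\eps_0<1$, the right-hand side is bounded above by the constant $\frac{(1+\eps_0)(m+1)A_m}{(1-\eps_0)mA_{m+1}}$, which is finite because $F$ never vanishes. On the left, $r_{m+1}/r_m > 1$, so the power tends to $+\infty$ as $n\to\infty$. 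Choosing $n_0$ so that the power exceeds this constant for every $n>n_0$ gives the conclusion uniformly in $\eps\in(0,\eps_0)$. Since $K$ is fixed, the shift $n-K$ causes no trouble.

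There is no real obstacle here. The only care required is the orientation of the inequalities on the negative axis, and the uniformity in $\eps$ in part~\eqref{lemaImn-2}, which is handled by bounding with $\eps_0$.
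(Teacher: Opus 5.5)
Your proof is correct and is essentially the paper's argument. In part (1) you both divide out the common positive factor $\left|\frac{F(m)}{F(m+1)}\right|\left(\frac{G(m)}{G(m+1)}\right)^{n-K}$ to reduce to $(1+\eps)m<(1-\eps)(m+1)$. In part (2) your ratios $r_{m+1}/r_m$ and $A_m/A_{m+1}$ are exactly the paper's $\frac{G(m)^2}{G(m-1)G(m+1)}>1$ and $\left|\frac{F(m-1)F(m+1)}{F(m)^2}\right|$, and uniformity in $\eps$ comes from the same bound $\frac{1+\eps}{1-\eps}<\frac{1+\eps_0}{1-\eps_0}$.
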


\begin{proof}
The proof of part~\eqref{lemaImn-1} is as follows. Given $m, n \in \NN$ and $\eps > 0$,
\begin{multline*}
  I_{m+1,n} \prec I_{m,n}'
  \\
  \iff
  -(1-\eps) \left|\frac{F(m)}{F(m+1)}\right| \frac{G(m)^{n-K}}{G(m+1)^{n-K}}
  < -(1+\eps) \left|\frac{F(m)}{F(m+1)}\right| \frac{m}{m+1}
  \frac{G(m)^{n-K}}{G(m+1)^{n-K}}
  \\
  \iff
  (1+\eps) \frac{m}{m+1} < 1-\eps
  \iff \eps < \frac{1}{2m+1}.
\end{multline*}

We next prove part~\eqref{lemaImn-2}. Given $m, n \in \NN$ and $0 < \eps < 1$,
\begin{multline*}
  I_{m,n}' \prec I_{m,n}
  \\
  \iff
  -(1-\eps) \left|\frac{F(m)}{F(m+1)}\right| \frac{m}{m+1}
  \frac{G(m)^{n-K}}{G(m+1)^{n-K}}
  < -(1+\eps) \left|\frac{F(m-1)}{F(m)}\right| \frac{G(m-1)^{n-K}}{G(m)^{n-K}}
  \\
  \iff \frac{1+\eps}{1-\eps}
  \left|\frac{F(m-1) F(m+1)}{F(m)^2}\right|
  \frac{m+1}{m}
  < \left(
  \frac{G(m)^2}
  {G(m-1) (G(m+1))}
  \right)^{n-K}.
\end{multline*}
Since, given~$m$,
\[
  1 < \frac{G(m)^2}{G(m+1) G(m-1)}
\]
and
\[
  \frac{1+\eps}{1-\eps} < \frac{1+\eps_0}{1-\eps_0}
\]
if $0 < \eps < \eps_0$,
the above inequality holds for $n$ large enough and $0 < \eps < \eps_0$.
\end{proof}

According to the lemma, given $m$ and $\eps_0 \le \frac{1}{2m+1}$, the intervals $I_{k,n}$ and $I_{k,n}'$ are arranged as follows for $k=1,2,\dots m$, $0 < \eps < \eps_0$ and $n > n_0$:
\[
  I_{m,n} \prec \dots \prec I_{k+1,n}' \prec I_{k+1,n} \prec I_{k,n}' \prec I_{k,n} \prec \dots \prec I_{3,n} \prec I_{2,n}' \prec I_{2,n} \prec I_{1,n}' \prec I_{1,n}
\]

For the next lemma, let us consider the intervals
\begin{equation}
\label{Jmn}
  J_{m,n} = \Bigg\{ x \in \RR : \Bigg| \frac{x}{-\frac{F(m-1)}{F(m)} 
   \left(\frac{G(m-1)}{G(m)}\right)^{n-K}} - 1 \Bigg| < \eps
  \Bigg\}.
\end{equation}
Each $J_{m,n}$ is the interval with endpoints
\[
  -(1 \pm \eps) \frac{F(m-1)}{F(m)} \left(\frac{G(m-1)}{G(m)}\right)^{n-K}.
\]
Given some $m \in \NN$, if $\frac{F(m-1)}{F(m)} > 0$ we have $J_{m,n} = I_{m,n}$, while $J_{m,n} = -I_{m,n}$ if $\frac{F(m-1)}{F(m)} < 0$. The above lemma proves that, in any case, given $m$ the intervals $J_{1,n}, \dots, J_{m,n}$ are disjoint for $\eps$ small enough and $n$ large enough.

The next lemma is the key to proving Theorem~\ref{sip}. The case $G(j)=j$ and $F(j)$ being a multiplier sequence is \cite[Lemma 6]{dzb}.

\begin{Lem}
\label{lemaQn>d}
Let
\begin{itemize}
\item $F : \NN \cup \{0\} \to \RR \setminus \{0\}$ be an arbitrary function,

\item $G : \NN \cup \{0\} \to (0,+\infty)$ be a (positive) function such that for every $j \in \NN$,
\[
  G(j-1) G(j+1) < G(j)^2,
\]

\item $K$ be a real number (not necessarily positive),

\item $\tau_j(n) \in \RR$, for each $j, n \in \NN \cup \{0\}$, $j \le n$, such that for each $j$,
\[
  \lim_{n \to +\infty} \tau_j(n) = 1.
\]
\end{itemize}
For each $n, N \in \NN \cup \{0\}$, let
\[
  Q_{n,N}(x) = \sum_{j=0}^N F(j) G(j)^{n-K} \tau_j(n) x^j.
\]
Given $\eps \in (0,1)$ and $N \in \NN$, there exist some $n_0$ and $\delta > 0$ such that for every $n \ge n_0$:
\begin{enumerate}
\item
The polynomial $Q_{n,N}$ has exactly one (simple) zero in each interval $J_{m,n}$ ($m=1,2,\dots,N$) defined by~\eqref{Jmn}.

\item
$|Q_{n,N}(x)| \ge \delta G(0)^{n-K}$, if $x \notin \bigcup_{m=1}^N J_{m,n}$.
\end{enumerate}
\end{Lem}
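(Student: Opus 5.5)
The strategy is to rescale the variable separately near each window $J_{m,n}$. Near $J_{m,n}$ the polynomial $Q_{n,N}$ should be dominated by its two monomials of degrees $m-1$ and $m$. We then treat $Q_{n,N}$ away from all windows by a simple dominance argument.

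\emph{Local analysis.} Fix $m\in\{1,\dots,N\}$ and write $\rho_{m,n}=\left|\frac{F(m-1)}{F(m)}\right|\left(\frac{G(m-1)}{G(m)}\right)^{n-K}$. Put $x=\rho_{m,n}\, t$ with $t$ in a compact set $T$ avoiding $0$; for instance $T$ is a closed annulus $\{\eta\le|t|\le 1/\eta\}$ with $\eta$ small. Dividing by $|F(m)|G(m)^{n-K}\rho_{m,n}^m$, the $j$-th term of $Q_{n,N}(\rho_{m,n}t)$ becomes
\[
\frac{F(j)}{|F(m)|}\,\frac{|F(m-1)|^{j-m}}{|F(m)|^{j-m}}\,\tau_j(n)\, t^j\cdot\left(\frac{G(j)\,(G(m-1)/G(m))^{j}}{G(m)\,(G(m-1)/G(m))^{m}}\right)^{n-K}.
\]
By Lemma~\ref{lemaG}\eqref{Gj}, the quantity in parentheses equals $1$ for $j=m-1,m$ and is strictly less than $1$ for all other $j$. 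Since $j\le N$ ranges over a finite set, these terms tend to $0$ uniformly for $t\in T$. Using $\tau_j(n)\to1$, the normalized polynomial therefore converges uniformly on compacts to the two-term polynomial $\pm\bigl(\sgn(F(m-1)F(m))\,t^{m-1}+t^m\bigr)$. The limit vanishes at $t=-\sgn(F(m-1)/F(m))$, and this is a simple zero. Hurwitz's theorem (or a sign change combined with a derivative bound, since everything is real) then gives exactly one zero in the rescaled window $|t-t_0|<\eps|t_0|$, which is $J_{m,n}$. On $T$ minus that window, the limit is bounded below in absolute value by some $c_m>0$.

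\emph{Global lower bound.} The windows $J_{m,n}$ are disjoint for $n$ large by Lemma~\ref{lemaImn}, and the scales $\rho_{m,n}$ are separated: $\rho_{m+1,n}/\rho_{m,n}\to0$ because $G(m)^2>G(m-1)G(m+1)$. Hence the whole line splits into the following regions:
\begin{itemize}
\item $|x|\le\eta\rho_{1,n}$, where the constant term $F(0)G(0)^{n-K}\tau_0(n)$ dominates;
\item annuli $\eta\rho_{m,n}\le|x|\le\rho_{m,n}/\eta$, handled by the local analysis;
\item gaps $\rho_{m+1,n}/\eta\le|x|\le\eta\rho_{m,n}$, where the monomial of degree $m$ dominates;
\item $|x|\ge\rho_{N,n}/\eta$, where the top monomial of degree $N$ dominates.
\end{itemize}
In each region I compare the dominant term with the others using Lemma~\ref{lemaG}\eqref{Gj}; the unimodality gives geometric gains. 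For instance, in a gap the ratio of term $j$ to term $m$ is at most $C\eta^{|j-m|}$ times a factor $\le1$. Choosing $\eta$ small absorbs the finitely many constants $|F(j)/F(m)|$.

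\emph{Main obstacle.} The hardest step is converting these local estimates into the single bound $|Q_{n,N}(x)|\ge\delta\,G(0)^{n-K}$. In region $m$ the dominant term has size about $|F(m)|G(m)^{n-K}|x|^m$. By Lemma~\ref{lemaG}\eqref{Gj} applied iteratively, with $|x|\gtrsim\rho_{m,n}$, this is at least a constant times $G(0)^{n-K}$. Concretely, the term of degree $m$ at $|x|=\rho$ in the scale of window $k\le m$ is at least the constant term, up to factors depending only on $F$. This follows from the telescoping identity
\[
G(m)^{n-K}\rho_{1,n}\cdots\rho_{m,n}=\left|\frac{F(0)}{F(m)}\right| G(0)^{n-K}.
\]
Since $|x|\ge\rho_{m,n}$ or $|x|\ge\eta\rho_{m,n}$ in the relevant range, and $\rho_{k,n}\ge\rho_{m,n}$ for $k\le m$, we get the required lower bound. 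I expect the careful bookkeeping of these regions and of the constants $\eta$ and $\delta$ in terms of $\eps$ and $N$ to be the main technical work.
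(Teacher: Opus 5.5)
Your route is sound, but the ordering of the scales is backwards. In fact
$\rho_{m+1,n}/\rho_{m,n}=\bigl|\frac{F(m)^2}{F(m-1)F(m+1)}\bigr|\bigl(\frac{G(m)^2}{G(m-1)G(m+1)}\bigr)^{n-K}\to+\infty$, so $\rho_{1,n}\ll\rho_{2,n}\ll\dots\ll\rho_{N,n}$. In other words, the $m$-th window is where the $m$-th zero in increasing absolute value sits.

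Your first and last regions already fit this ordering. The other two places need correcting:
\begin{itemize}
\item The gaps should be $\rho_{m,n}/\eta\le|x|\le\eta\rho_{m+1,n}$.
\item The final step needs $\rho_{k,n}\le\rho_{m,n}$ for $k\le m$. With the inequality you wrote, $|x|\ge\rho_{m,n}$ would not give $|x|^m\ge\rho_{1,n}\cdots\rho_{m,n}$.
\end{itemize}
Once this is fixed, the dominance estimates in each region follow from Lemma~\ref{lemaG}\eqref{Gj} exactly as you indicate. The telescoping identity is not even needed: $|F(m)|G(m)^{n-K}\rho_{m,n}^m=\frac{|F(m-1)|^m}{|F(m)|^{m-1}}\bigl(\frac{G(m-1)^m}{G(m)^{m-1}}\bigr)^{n-K}$, and Lemma~\ref{lemaG}\eqref{Gj} with $j=0$ gives $G(0)\le G(m-1)^m/G(m)^{m-1}$.

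Also keep Hurwitz rather than your parenthetical alternative. The derivative of the limit $t^{m-1}(t-t_0)$ vanishes at $\frac{m-1}{m}t_0$, which lies inside the window when $\eps>1/m$. So a derivative bound only works in a small neighbourhood of $t_0$, combined with your lower bound elsewhere in the window.

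Compared with the paper, your argument is correct but takes a heavier route. The paper evaluates $Q_{n,N}$ only at the $2N$ endpoints of the windows. After multiplying by $\bigl(G(m)^{m-1}/G(m-1)^m\bigr)^{n-K}$, these values tend to $(-1)^{m-1}(1\pm\eps)^{m-1}\frac{F(m-1)^m}{F(m)^{m-1}}(\mp\eps)$, which have opposite signs at the two ends. So each of the $N$ disjoint windows contains a zero, and since $\deg Q_{n,N}=N$ there is exactly one simple zero per window and all zeros are real.

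Part (2) then needs no further asymptotics. On a closed zero-free interval, $|Q_{n,N}|$ is at least the smaller of its endpoint values, because it is log-concave between consecutive real zeros and monotone outside them. So the lower bound reduces to the same endpoint limits together with $G(0)\le G(m-1)^m/G(m)^{m-1}$.

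You replace the degree count and this real-rootedness argument with uniform local asymptotics and explicit dominance estimates on every region. That costs the bookkeeping you anticipated. In return, your argument is purely local and quantitative and does not rely on the windows accounting for all zeros of $Q_{n,N}$.
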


\begin{proof}
Let us fix $\eps \in (0,1)$ and $N \in \NN$; for now, take any~$n$. Let us evaluate $Q_{n,N}$ in both endpoints of an interval $J_{m,n}$, with $m=1,2,\dots,N$:
\begin{multline*}
  Q_{n,N}\left( -(1\pm \eps) \tfrac{F(m-1)}{F(m)} \left( \tfrac{G(m-1)}{G(m)} \right)^{n-K} \right)
  \\
  = \sum_{j=0}^N (-1)^j (1\pm \eps)^j F(j) \tfrac{F(m-1)^j}{F(m)^j}
  \tau_j(n) G(j)^{n-K} \left( \tfrac{G(m-1)}{G(m)} \right)^{j(n-K)}
\end{multline*}
(take apart the terms $j=m-1$ and $j=m$)
\begin{multline*}
  = (-1)^{m-1} (1\pm \eps)^{m-1} \tfrac{F(m-1)^m}{F(m)^{m-1}}
  \tau_{m-1}(n) \left( \tfrac{G(m-1)^m}{G(m)^{m-1}} \right)^{n-K}
  \\
  + (-1)^m (1\pm \eps)^m \tfrac{F(m-1)^m}{F(m)^{m-1}}
  \tau_m(n) \left( \tfrac{G(m-1)^m}{G(m)^{m-1}} \right)^{n-K}
  \\
  + \sum_{\substack{0 \le j \le N \\ j\ne m-1, m}} (-1)^j (1\pm \eps)^j F(j) \tfrac{F(m-1)^j}{F(m)^j}
  \tau_j(n) G(j)^{n-K} \left( \tfrac{G(m-1)}{G(m)} \right)^{j(n-K)}
\\
  = (-1)^{m-1} (1\pm \eps)^{m-1} \tfrac{F(m-1)^m}{F(m)^{m-1}}
  \left( \tfrac{G(m-1)^m}{G(m)^{m-1}} \right)^{n-K}
  \big( \tau_{m-1}(n) - (1 \pm \eps) \tau_m(n) \big)
  \\
  + \sum_{\substack{0 \le j \le N \\ j\ne m-1, m}} (-1)^j (1\pm \eps)^j F(j) \tfrac{F(m-1)^j}{F(m)^j}
  \tau_j(n) \left( G(j) \left(\tfrac{G(m-1)}{G(m)}\right)^j \right)^{n-K}.
\end{multline*}
Dividing by $\left( \tfrac{G(m-1)^m}{G(m)^{m-1}} \right)^{n-K}$ we obtain that
\begin{multline*}
  \left( \tfrac{G(m)^{m-1}}{G(m-1)^m} \right)^{n-K} 
  Q_{n,N}\left( -(1\pm \eps) \tfrac{F(m-1)}{F(m)} \left( \tfrac{G(m-1)}{G(m)} \right)^{n-K} \right)
  \\
  = (-1)^{m-1} (1\pm \eps)^{m-1} \tfrac{F(m-1)^m}{F(m)^{m-1}}
  \big( \tau_{m-1}(n) - (1 \pm \eps) \tau_m(n) \big)
  \\
  + \sum_{\substack{0 \le j \le N \\ j\ne m-1, m}} (-1)^j 
  (1\pm \eps)^j F(j) \tfrac{F(m-1)^j}{F(m)^j}
  \tau_j(n) \left(
  \frac{G(j) \Big(\tfrac{G(m-1)}{G(m)}\Big)^j}
  {\tfrac{G(m-1)^m}{G(m)^{m-1}}}
  \right)^{n-K}.
\end{multline*}
Let us take now limit in $n$, taking into account that:
\begin{itemize}
\item For each $j$, $\lim\limits_{n\to +\infty} \tau_j(n) = 1$.

\item According to Lemma~\ref{lemaG}, part~\eqref{Gj}, $\frac{G(j) \left(\tfrac{G(m-1)}{G(m)}\right)^j}
  {\tfrac{G(m-1)^m}{G(m)^{m-1}}} < 1$ for $j \ne m-1, m$.
\end{itemize}
Therefore, for each $m \in \{1,2,\dots,N\}$,
\begin{multline}
\label{limQnM}
  \lim_{n\to +\infty} \left( \tfrac{G(m)^{m-1}}{G(m-1)^m} \right)^{n-K} 
  Q_{n,N}\left( -(1\pm \eps) \tfrac{F(m-1)}{F(m)} 
  \left( \tfrac{G(m-1)}{G(m)} \right)^{n-K} \right)
  \\
  = (-1)^{m-1} (1\pm \eps)^{m-1} \tfrac{F(m-1)^m}{F(m)^{m-1}}
  (\mp \eps).
\end{multline}
This implies that, given $m \in \{1,2,\dots,N\}$ and taking $n$ large enough, both values
\begin{equation}
\label{QImndistintosigno}
  Q_{n,N}\left( -(1\pm \eps) \tfrac{F(m-1)}{F(m)} \left( \tfrac{G(m-1)}{G(m)} \right)^{n-K} \right)
\end{equation}
have opposite sign, that is, $Q_{n,N}$ changes its sign at the endpoints of $J_{m,n}$, so it has at least one zero in this interval. For $n$ large enough, this happens in all the $J_{m,n}$, with $m=1,2,\dots,N$. Since $Q_{n,N}$ has degree $N$, these zeros are unique (and simple). This proves the first part of the lemma.

If all zeros of $Q_{n,N}$ are real and simple, then all zeros of its derivative $Q_{n,N}'$ are simple as well, and there is exactly one zero between any two consecutive zeros of $Q_{n,N}$. Between any two consecutive zeros, $|Q_{n,N}|$ increases up to some point, then it decreases. In particular, in any interval $[a,b]$ on which $Q_{n,N}$ does not vanishes, it is plain that
\[
  |Q_{n,N}(x)| \ge \min\{ |Q_{n,N}(a)|, |Q_{n,N}(b)|\},
  \quad x \in [a,b].
\]

Thus, if $n$ is so large that the values~\eqref{QImndistintosigno} have opposite sign for $m=1,2,\dots,N$, it follows that
\[
  |Q_{n,N}(x)| \ge \min_{m=1,2,\dots,N}\left\{
  \left|Q_{n,N}\left( -(1\pm \eps) \tfrac{F(m-1)}{F(m)} 
  \left( \tfrac{G(m-1)}{G(m)} \right)^{n-K} \right)\right|
  \right\}
\]
for $x \notin \bigcup_{m=1}^N J_{m,n}$. According to Lemma~\ref{lemaG}, part~\eqref{Gj},
we have that
\[
  1 \ge G(0) \frac{G(m)^{m-1}}{G(m-1)^m}.
\]
Therefore, keeping in mind the limit~\eqref{limQnM}, we conclude that if $n$ is large enough, then for every $x \notin \bigcup_{m=1}^N J_{m,n}$,
\begin{multline*}
  |Q_{n,N}(x)| \ge \min_{m=1,2,\dots,N}\Big\{
  \Big|Q_{n,N}\Big( -(1\pm \eps) \tfrac{F(m-1)}{F(m)} \Big( \tfrac{G(m-1)}{G(m)} \Big)^{n-K} \Big)\Big|
  \Big\}
  \\
  \ge G(0)^{n-K} \min_{m=1,2,\dots,N}\Big\{
  \Big( \tfrac{G(m)^{m-1}}{G(m-1)^m} \Big)^{n-K}
  \Big| Q_{n,N}\Big( -(1\pm \eps) \tfrac{F(m-1)}{F(m)} 
  \Big( \tfrac{G(m-1)}{G(m)} \Big)^{n-K} \Big)\Big|
  \Big\}
  \\
  \ge G(0)^{n-K} \min_{m=1,2,\dots,N}\Big\{
  \frac{1}{2} \Big|
  (-1)^{m-1} (1\pm \eps)^{m-1} \tfrac{F(m-1)^m}{F(m)^{m-1}}
  (\mp \eps)
  \Big|
  \Big\}
  \\
  = G(0)^{n-K} \min_{m=1,2,\dots,N} \Big\{ \frac{\eps}{2} (1 - \eps)^{m-1} 
  \Big|\tfrac{F(m-1)^m}{F(m)^{m-1}} \Big| \Big\}.
\end{multline*}
This proves the second part of the lemma, with
\[
  \delta = \min_{m=1,2,\dots,N} \left\{
  \frac{\eps}{2} (1 - \eps)^{m-1} \left|\tfrac{F(m-1)^m}{F(m)^{m-1}}\right|
  \right\}.
  \qedhere
\]
\end{proof}

\begin{Rem}
\label{taumenos}
Instead of the condition that, for every $j$,
\[
  \lim_{n \to +\infty} \tau_j(n) = 1,
\]
we can impose these slightly weaker conditions:
\begin{itemize}
\item For each $j$,
\[
  \lim_{n\to+\infty} \frac{\tau_j(n)}{\tau_{j+1}(n)} = 1.
\]
\item For each $j$, there exists some constant $C_j > 0$ such that, for every $n \in \NN$, $C_j \le \tau_j(n)$.
\end{itemize}
The proof needs only minor changes, so we omit it.
\end{Rem}

We are now ready to prove Theorem~\ref{sip} (the case $G(j)=j$ and $F(j)$ being a multiplier sequence is~\cite[Theorem~4]{dzb}).

\begin{proof}[Proof of Theorem~\ref{sip}]
Let us fix $m \in \NN$ and $\eps \in (0,1)$. According to Lemma~\ref{lemaG}, part~\eqref{limite0}, we can take $N \in \NN$ with the condition that $m \le N$ and, for reasons to be seen later, such that
\begin{equation}
\label{menorque12}
  (1+\eps) G(N+1)
  \left(\frac{G(m-1)}{G(m)}\right)^{N+1}
  < \frac{1}{2} G(0).
\end{equation}
Let us take $n_0$ and $\delta$ as given by Lemma~\ref{lemaQn>d}.
For each $n \ge N+1$, let us decompose
\[
  P_n(x) = Q_{n,N}(x) + R_{n,N}(x),
\]
with
\begin{align*}
  Q_{n,N}(x) &= \sum_{j=0}^N F(j) G(j)^{n-K} \tau_j(n) x^j,
  \\
  R_{n,N}(x) &= \sum_{j=N+1}^n F(j) G(j)^{n-K} \tau_j(n) x^j.
\end{align*}
We are going to prove first that if $n$ is large enough and
\begin{equation}
\label{rangodex}
  |x| \le (1+\eps) \left|\frac{F(m-1)}{F(m)}\right|
  \left( \frac{G(m-1)}{G(m)} \right)^{n-K},
\end{equation}
then
\[
  |R_{n,N}(x)| \le \frac{\delta}{2} G(0)^{n-K}.
\]
Let us consider each term of the polynomial $R_{n,N}$. If $N+1 \le j \le n$, then $j \ge m$ as well. And, by the property of the function~$F$,
\[
  \left|\frac{F(j)}{F(j-1)}\right|
  \le \left|\frac{F(j-1)}{F(j-2)}\right| 
  \le \dots \le \left|\frac{F(m)}{F(m-1)}\right|.
\]
Therefore,
\begin{align*}
  |F(j)|
  &= \left|\frac{F(j)}{F(j-1)} \cdot \frac{F(j-1)}{F(j-2)} \dots \frac{F(m)}{F(m-1)} F(m-1)\right|
  \\
  &\le \left| \frac{F(m)}{F(m-1)} \right|^{j-m+1} |F(m-1)|.
\end{align*}
Under the condition~\eqref{rangodex},
\begin{multline*}
  |F(j) G(j)^{n-K} \tau_j(n) x^j|
  \\
  \le \left| \frac{F(m)}{F(m-1)} \right|^{j-m+1} |F(m-1)| G(j)^{n-K} |\tau_j(n)|
  (1+\eps)^j
  \left|\frac{F(m-1)}{F(m)}\right|^j
  \left( \frac{G(m-1)}{G(m)} \right)^{(n-K)j}
  \\
  = \left| \frac{F(m-1)}{F(m)} \right|^{m-1} |F(m-1)| G(j)^{n-K} |\tau_j(n)|
  (1+\eps)^j
  \left( \frac{G(m-1)}{G(m)} \right)^{(n-K)j}
  \\
  \le \left| \frac{F(m-1)}{F(m)} \right|^{m-1} |F(m-1)| \, |\tau_j(n)|
  \left(
  (1+\eps)
  G(j)
  \left( \frac{G(m-1)}{G(m)} \right)^j
  \right)^{n-K} (1+\eps)^K.
\end{multline*}
According to Lemma~\ref{lemaG}, part~\eqref{Gj}, and since $j \ge N+1 > m$, we have that
\[
  G(j)
  \left( \frac{G(m-1)}{G(m)} \right)^j
  \le G(N+1) \left( \frac{G(m-1)}{G(m)} \right)^{N+1}.
\]
Keeping in mind~\eqref{tauacotado} and~\eqref{menorque12}, it follows that
\[
  |F(j) G(j)^{n-K} \tau_j(n) x^j|
  \le C \left| \frac{F(m-1)}{F(m)} \right|^{m-1} |F(m-1)| 
  \frac{1}{2^{n-K}} G(0)^{n-K} (1+\eps)^K
\]
and thus,
\[
  |R_{n,N}(x)| \le
  C \left| \frac{F(m-1)}{F(m)} \right|^{m-1} |F(m-1)| \frac{n}{2^{n-K}} G(0)^{n-K}
  (1+\eps)^K.
\]
As a consequence, if $n$ is large enough (depending on $m$ and $\eps$), it follows that
\[
  |R_{n,N}(x)| \le \frac{\delta}{2} G(0)^{n-K},
\]
as we wanted. According to Lemma~\ref{lemaImn}, we can assume that the intervals $I_{k,n}$ and $I_{k,n}'$ ($k=1,2,\dots,m$) defined by~\eqref{Imn} and~\eqref{Imn'} are arranged as follows:
\[
  I_{m,n} \prec \dots \prec I_{k+1,n}' \prec I_{k+1,n} \prec I_{k,n}' \prec I_{k,n} 
  \prec \dots \prec I_{3,n} \prec I_{2,n}' \prec I_{2,n} \prec I_{1,n}' \prec I_{1,n}
\]

According to Lemma~\ref{lemaQn>d}, in each interval $J_{k,n}$ given by~\eqref{Jmn} the polynomial $Q_{n,N}$ has just one simple zero. Moreover, $Q_{n,N}$ has opposite sign on the endpoints of each $J_{k,n}$.

In $(\inf I_{m,n},-\inf I_{m,n}) \setminus \bigcup_{k=1}^m J_{k,n}$, we have that $|Q_{n,N}| \ge \delta G(0)^{n-K}$, according to Lemma~\ref{lemaQn>d}, and $|R_{n,N}(x)| \le \frac{\delta}{2} G(0)^{n-K}$, as we have just proven. Therefore, $P_n$ has there the same sign as $Q_{n,N}$. In particular, it has no zeros on $(\inf I_{m,n},-\inf I_{m,n}) \setminus \bigcup_{k=1}^m J_{k,n}$. Moreover, $P_n$ has opposite sign on the endpoint of each $J_{k,n}$, so it has at least one zero in each $J_{k,n}$, $k=1,2,\dots,m$.

Let us note now that
\begin{align*}
  P_n'(x) &= \sum_{j=1}^n F(j) j G(j)^{n-K} \tau_j(n) x^{j-1}
  = \sum_{j=0}^{n-1} F(j+1)(j+1) G(j+1)^{n-K} \tau_{j+1}(n) x^j
  \\
  &= \sum_{j=0}^{n-1} \widetilde{F}(j) \widetilde{G}(j)^{n-1-K} \tau_{j+1}(n) x^j,
\end{align*}
where
\begin{align*}
  \widetilde{F}(j) &= F(j+1) (j+1) G(j+1),
  \\
  \widetilde{G}(j) &= G(j+1).
\end{align*}
The functions $\widetilde{F}$ and $\widetilde{G}$ trivially satisfy the same conditions posed on $F$ and $G$, respectively. Therefore, we can apply what is already proven to the polynomials $P_{n+1}'$, changing $F$ to $\widetilde{F}$ and $G$ to~$\widetilde{G}$: for $n$ large enough, the first negative zeros of~$P_n'$ (change also $n$ to $n-1$) belong to the intervals
\begin{multline*}
  \Bigg\{ x \in \RR : \Bigg| \frac{x}{-\frac{\widetilde{F}(k-1)}{\widetilde{F}(k)} 
  \Big(\frac{\widetilde{G}(k-1)}{\widetilde{G}(k)}\Big)^{n-1-K}} - 1 \Bigg| < \eps
  \Bigg\}
  \\
  = \Bigg\{ x \in \RR : \Bigg| \frac{x}{-\frac{F(k) k}{F(k+1) (k+1)} 
  \Big(\frac{G(k)}{G(k+1)}\Big)^{n-K}} - 1 \Bigg| < \eps
  \Bigg\},
\end{multline*}
which, as follows easily from Lemma~\ref{lemaImn}, are disjoint with the intervals~$J_{k,n}$.

Therefore, $P_n'$ does not vanish on any $J_{k,n}$ and $P_n$ has in each $J_{k,n}$ just one zero, which is simple.
This means that $\zeta_{n,m} \in J_{m,n}$ for $n$ large enough. In other words,
\[
  \Bigg|
  \frac{\zeta_{n,m}}{
  -\frac{F(m-1)}{F(m)} \Big(\frac{G(m-1)}{G(m)}\Big)^{n-K}
  } -1
  \Bigg|
  < \eps,
\]
as we wanted to show.
\end{proof}

\begin{Rem}
\label{RemFjgrande}
The condition
\[
  |F(j-1) F(j+1)| \le F(j)^2
\]
is required only for $j$ large enough, say $j \ge m_0$. Reviewing the proof, we need to take then $m \ge m_0$. But this is not a problem, since in fact the proof says that for each $k \le m$ the $k$-th zero of $P_n$ is simple and belongs to~$J_{k,n}$.
\end{Rem}

\begin{Rem}
The proof does not need the zeros of $P_n$ and $P_{n-1}$ to interlace. But, as a consequence of the theorem, it is easy to deduce that they interlace, if $F$ is a positive function (so that $I_{k,n} = J_{k,n}$): we just need to check that the intervals $I_{m,n}$ and $I_{m,n-1}$ interlace. It looks as if both properties were equivalent: that the zeros are simple and that the zeros of $P_n$ and $P_{n-1}$ interlace.
\end{Rem}

\begin{Rem}
The $\frac{1}{2}$ appearing in~\eqref{menorque12} can be changed by any $s \in (0,1)$. Repeating the arguments, one then arrives at
\[
  |R_{n,N}(x)| \le
  \left| \frac{F(m-1)}{F(m)} \right|^{m-1} |F(m-1)| n s^{n-K} G(0)^{n-K} (1+\eps)^K 
  \sup_{0 \le j \le n} |\tau_j(n)|.
\]
Thus, the condition~\eqref{tauacotado} is much more than needed. Instead of the assumption $|\tau_j(n)| \le C$ of Theorem~\ref{sip}, it would be enough $|\tau_j(n)| \le C^n$ for some constant $C > 0$, for instance. Neither this nor the weaker conditions of Remark~\ref{taumenos} would affect the conclusions of Theorem~\ref{sip}.
\end{Rem}

\paragraph{\textbf{Acknowledgments.}}

The research of the first author was partially supported by PID2024-155593NB-C21 (MICIU and Feder Funds (European Union)), IMUS-Mar\'ia de Maeztu grant CEX2024-001517-M (funded by MICIU/\allowbreak AEI/\allowbreak 10.13039/\allowbreak 501100011033), Pro\-gra\-ma Ope\-ra\-ti\-vo PPIT-FEDER Anda\-lu\-c\'ia 2021-2027 SOL2024-31596 and SOL2024-31708 and FQM-262 (Jun\-ta de Anda\-lu\-c\'ia). 
The research of the second author was partially supported by DGA Project E48\_23R.
The research of the second and third authors was partially supported by PID2024-155593NB-C22.



\end{document}